\newtheorem{thm}{Theorem}[section]
\newtheorem{theorem}[thm]{Theorem}
\newtheorem{lemma}[thm]{Lemma}
\newtheorem{proposition}[thm]{Proposition}
\newtheorem{corollary}[thm]{Corollary}
\newtheorem{remark}[thm]{Remark}
\newtheorem{assumption}[thm]{Assumption}
\newcommand{\dt}{\odot}
\newcommand{\mb}[1]{\mbox{\boldmath $#1$}}
\begin{document}
\title{Solution to a Monotone Inclusion Problem using the Relaxed Peaceman-Rachford Splitting Method: Convergence and its Rates}

\author{{\bf{Chee-Khian Sim}}\footnote{Email address:
chee-khian.sim@port.ac.uk} \\ School of Mathematics and Physics\\
University of Portsmouth \\ Lion Gate Building, Lion Terrace \\ Portsmouth PO1 3HF}

\date{November 12, 2022}

\maketitle
\begin{abstract}
We consider the convergence behavior using the relaxed Peaceman-Rachford splitting method to solve the monotone inclusion problem $0 \in (A + B)(u)$, where $A, B: \Re^n \rightrightarrows \Re^n$ are maximal $\beta$-strongly monotone operators, $n \geq 1$ and $\beta > 0$.  Under a technical assumption, convergence of iterates using the method on the problem is proved when either $A$ or $B$ is single-valued, and the fixed relaxation parameter $\theta$ lies in the interval $(2 + \beta, 2 + \beta + \min \{ \beta, 1/\beta \})$.  With this convergence result, we address an open problem that is not settled in \cite{Monteiro} on the convergence of these iterates for $\theta \in (2 + \beta, 2 + \beta + \min \{ \beta, 1/\beta \})$.  Pointwise convergence rate results and {\textcolor{black}{$R$}}-linear convergence rate results when $\theta$ lies in the interval $[2 + \beta, 2 + \beta + \min \{\beta, 1/\beta \})$ are also provided in the paper.  Our analysis to achieve these results is atypical and hence novel.  {\textcolor{black}{Numerical experiments on the weighted Lasso minimization problem are conducted to test the validity of the assumption.}}
 
\vspace{15pt}

\noindent {\bf{Keywords.}} Relaxed Peaceman-Rachford splitting method; Maximal strong monotonicity; Convergence; Pointwise convergence rate; {\textcolor{black}{$R$}}-linear convergence rate.


\end{abstract}

\section{Introduction}\label{sec:Intro}

\noindent We consider the following monotone inclusion problem:
\begin{eqnarray}\label{MIP}
0 \in (A + B)(u),
\end{eqnarray}
where $A, B: \Re^n \rightrightarrows \Re^n$ are point-to-set operators that are maximal $\beta$-strongly monotone. When $\beta = 0$, $A, B$ are maximal monotone operators, while when $\beta > 0$, $A, B$ are maximal and strongly monotone with constant $\beta$ in the usual sense.  In our discussion in this paper, we always consider (\ref{MIP}) having a solution and $\beta > 0$.   With $\beta > 0$, then there exists only one solution to (\ref{MIP}).  Let this unique solution be given by $u^\ast$.  

\vspace{10pt}

\noindent  The relaxed Peaceman-Rachford (PR) splitting method is studied extensively in the literature, such as \cite{Bauschke1}-\cite{Bauschke3}, \cite{Combettes}-\cite{Eckstein}, \cite{Giselsson}-\cite{He},  \cite{Lions,Monteiro}, to solve the monotone inclusion problem (\ref{MIP}). The method generates iterates $\{x_k\}$ using the following recursive relation: 
\begin{eqnarray}\label{RelaxedPR}
x_{k} = x_{k-1} + \theta (J_{\rho B}(2 J_{\rho A}(x_{k-1}) - x_{k-1}) - J_{\rho A}(x_{k-1})), \quad k \geq 1,
\end{eqnarray}
where $x_0$ is any point in $\Re^n$, $\theta > 0$ is a fixed relaxation parameter, $\rho > 0$ is an arbitrary scalar, and $J_T := (I + T)^{-1}$.  In this paper, we always set $\rho$ to be equal to 1.  When $\theta = 1$, the method is also known as the Douglas-Rachford splitting method, while when $\theta = 2$, it is also called the Peaceman-Rachford splitting method.  If the sequence $\{x_k\}$ generated by the relaxed PR splitting method converges to $x^\ast$, then the solution to (\ref{MIP}) is given by $u^\ast = J_A(x^\ast)$. 

\vspace{10pt}

\noindent Given $A, B$ maximal $\beta$-strongly monotone operators, and $\{x_k\}$ a sequence generated by the relaxed PR splitting method (\ref{RelaxedPR}) with $\rho = 1$, it is well-known that when $\beta = 0$, the sequence $\{ x_k \}$ generated converges for $\theta \in (0, 2)$ (see for example \cite{Bauschke, Facchinei}), while in \cite{Monteiro}, it is shown that when $\beta > 0$, the sequence $\{ x_k \}$ converges for $\theta \in (0, 2 + \beta]$.  Furthermore, in \cite{Monteiro}, an instance of (\ref{MIP}) is given for nonconvergence of $\{x_k\}$ for any $\theta \geq 2$ when $\beta = 0$.  This instance also shows nonconvergence of $\{ x_k \}$ for any $\theta \geq 2 + \beta + \min \{ \beta, 1/\beta \}$ when $\beta > 0$.  When $\beta > 0$, we see  from \cite{Monteiro} that the convergence behavior of iterates generated by (\ref{RelaxedPR}) to solve (\ref{MIP}) for $\theta \in (2 + \beta, 2 + \beta + \min \{ \beta, 1/\beta\})$ is not known, and to the best of our knowledge, has not been studied previously in the literature.  This paper attempts to fill the gap on this by investigating the convergence behavior of iterates $\{ x_k \}$ generated by (\ref{RelaxedPR}) to solve (\ref{MIP}) when $\theta \in (2 + \beta, 2 + \beta + \min \{\beta, 1/\beta\})$.  We show that an accumulation point, $x^{\ast\ast}$, of $\{ x_k \}$  has $J_A(x^{\ast\ast})$ that solves (\ref{MIP}) over this range of $\theta$, under a technical assumption.  As a consequence, if $A$ or $B$ is single-valued, then $\{ x_k \}$ converges to a limit point $x^\ast$, where $J_A(x^\ast)$ solves (\ref{MIP}). Note that for $n = 1$, not having this technical assumption leads to trivial consideration.  {\textcolor{black}{We believe that the assumption for convergence is merely technical and is not really needed for convergence to occur.  Also, through a numerical study in Section \ref{sec:Numerical}, we find that the assumption is always satisfied.}} We further believe that the convergence analysis for $\theta$ beyond $2 + \beta$ needs to be atypical {\textcolor{black}{and convergence is not shown in this range in the literature, for example, in \cite{Bartz,Dao}, where the focus of these papers is also different from ours}}.  Our analysis to achieve these results is based on finding explicit solutions to small dimensional optimization problems and small systems of inequalities as detailed in Section \ref{sec:technical}.  To add further to these contributions, we are able to provide pointwise convergence rate and {\textcolor{black}{$R$}}-linear convergence rate   results using (\ref{RelaxedPR}) to solve (\ref{MIP}) for $\theta \in [2 + \beta, 2 + \beta + \min\{\beta,1/\beta\})$, complementing results in \cite{Davis}-\cite{Eckstein}, \cite{Giselsson}-\cite{He}, \cite{Lions} and \cite{Monteiro}.  

\vspace{10pt}

\noindent This paper is divided into several sections.  In Section \ref{sec:technical}, we state and prove some technical results that are needed in a latter section, Section \ref{sec:Convergence}, to prove convergence of $\{ x_k \}$.  Section \ref{sec:preliminaries} introduces transformations on the relaxed Peaceman-Rachford (PR) splitting method (\ref{RelaxedPR}) that prepares us for analysis in Sections \ref{sec:Convergence} and \ref{sec:convergencerate}.  Section \ref{sec:Convergence} is on convergence of $\{x_k\}$, while Section \ref{sec:convergencerate} investigates pointwise convergence rate and {\textcolor{black}{$R$}}-linear convergence rate of $\{x_k\}$.  Finally, Section \ref{sec:Numerical} provides numerical results using (\ref{RelaxedPR}) to solve the weighted Lasso minimization problem.

\subsection{Conventions and Notations}\label{subsec:Notations}

\noindent $0 \cdot \infty$ is defined and is a real number, not necessarily zero.  What value it takes is dependent on the context.

\vspace{10pt}

\noindent Given $x = (x_1, \ldots, x_n)^T, y = (y_1, \ldots, y_n)^T \in (\Re \cup \{ \infty \})^n$, $x \dt y := (x_1 y_1, \ldots, x_n y_n)^T$.

\vspace{10pt}

\noindent Given $\alpha = (\alpha_1, \ldots, \alpha_n)^T \in (\Re \cup \{ \infty \})^n$, $\| \alpha \|_\infty := \sup \{ | \alpha_i |\ ; \ \alpha_i \in \Re, i = 1, \ldots, n \}$.

\vspace{10pt}

\noindent Given $x \in \Re^n$, $\| x \|$ stands for the 2-norm of $x$, while $\| x \|_1$ stands for the 1-norm of $x$.

\vspace{10pt}

\noindent {\textcolor{black}{$e \in \Re^n$ is the vector in $\Re^n$ of all ones.}


\section{Technical Results}\label{sec:technical}

\begin{proposition}\label{prop:2}
Let $0 < \epsilon < \min \{\beta, 1/\beta \}$, and $x, y \in \Re$.  We have
\begin{eqnarray*}
\max_{xy \geq x^2} \left[ \frac{1 -\beta}{1 + \beta} x^2 + \frac{3\beta - \epsilon - 2}{2 + \beta + \epsilon} xy + \frac{2(1 + \beta)(\epsilon - \beta)}{(2 + \beta + \epsilon)^2} y^2 \right] = 0,
\end{eqnarray*}
and is attained when $x = y = 0$.
\end{proposition}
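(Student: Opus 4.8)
The plan is to prove the stronger statement that the quadratic form
\[
Q(x,y) := \frac{1-\beta}{1+\beta}\,x^2 + \frac{3\beta-\epsilon-2}{2+\beta+\epsilon}\,xy + \frac{2(1+\beta)(\epsilon-\beta)}{(2+\beta+\epsilon)^2}\,y^2
\]
is nonpositive on the feasible region $\{(x,y): xy\ge x^2\}$ and equals $0$ only at $(x,y)=(0,0)$; since the origin is feasible with $Q(0,0)=0$, the asserted value of the maximum and the fact that it is attained only at the origin both follow. The device I would use is that $xy\ge x^2$ is the same as $x(y-x)\ge 0$, so setting $w:=y-x$ the constraint becomes simply $xw\ge 0$, and substituting $y=x+w$ and collecting terms gives
\[
Q(x,y)=p\,x^2+q\,xw+r\,w^2,
\]
with $p=\frac{1-\beta}{1+\beta}+\frac{3\beta-\epsilon-2}{2+\beta+\epsilon}+\frac{2(1+\beta)(\epsilon-\beta)}{(2+\beta+\epsilon)^2}$, $q=\frac{3\beta-\epsilon-2}{2+\beta+\epsilon}+\frac{4(1+\beta)(\epsilon-\beta)}{(2+\beta+\epsilon)^2}$, and $r=\frac{2(1+\beta)(\epsilon-\beta)}{(2+\beta+\epsilon)^2}$. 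If one shows $p,q,r<0$, then for any feasible point $x^2\ge 0$, $xw\ge 0$, $w^2\ge 0$ force $Q\le 0$; and $Q=0$ forces $x=0$ (from $p<0$), hence $w=y$, and then $r\,w^2=0$ forces $y=0$, so the value $0$ is attained only at the origin.

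It remains to check the three signs. That $r<0$ is immediate from $1+\beta>0$, $(2+\beta+\epsilon)^2>0$ and $\epsilon-\beta<0$ (since $\epsilon<\min\{\beta,1/\beta\}\le\beta$). For $p$, I would clear the positive factor $(1+\beta)(2+\beta+\epsilon)^2$; writing $s:=2+\beta+\epsilon$ and using the identities $3\beta-\epsilon-2=4\beta-s$ and $\epsilon-\beta=s-2(1+\beta)$, the numerator collapses to $-2\bigl(s-2(1+\beta)\bigr)\bigl(\beta s-(1+\beta)^2\bigr)$. Here $s-2(1+\beta)=\epsilon-\beta<0$ and $\beta s-(1+\beta)^2=\beta\epsilon-1<0$, the second because $\epsilon<1/\beta$; so the numerator is $-2$ times a product of two negative numbers and $p<0$. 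For $q$, I would clear $(2+\beta+\epsilon)^2$ and expand: the numerator simplifies to $-\beta^2+6\beta\epsilon-\epsilon^2-4=-(\beta-\epsilon)^2+4(\beta\epsilon-1)$, which is strictly negative because $(\beta-\epsilon)^2\ge 0$ and $\beta\epsilon<1$; hence $q<0$.

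The only genuine obstacle is the algebra for $p$: cleared of denominators its numerator is an unwieldy cubic in $\beta$ and $\epsilon$, and without noticing the factorization $-2(s-2(1+\beta))(\beta s-(1+\beta)^2)$ one is pushed toward a clumsy case split according to whether $\beta\le 1$ or $\beta>1$ (because $\min\{\beta,1/\beta\}$ changes form). The point of the factorization is that the two hypotheses $\epsilon<\beta$ and $\epsilon<1/\beta$ appear directly as the signs of the two factors, which also explains why $0<\epsilon<\min\{\beta,1/\beta\}$ is precisely the right hypothesis. All the remaining steps are routine expansions.
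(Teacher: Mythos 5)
Your proof is correct, and it organizes the argument differently from the paper. The paper splits into the cases $x=0$ and $x\neq 0$, and for $x\neq 0$ normalizes $z=y/x\ge 1$ and studies the one-variable concave quadratic $f(z)$ obtained by dividing by $x^2$: it locates the unconstrained maximizer, checks that it is less than $1$, and concludes that the constrained maximum equals $f(1)<0$. You instead substitute $w=y-x$ so that the constraint becomes the sign condition $xw\ge 0$, write the objective as $p\,x^2+q\,xw+r\,w^2$, and reduce everything to $p,q,r<0$; this avoids both the case split and the vertex/concavity argument, and it delivers term-by-term nonpositivity together with the uniqueness of the maximizer at the origin in one stroke. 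The computational content of the two proofs coincides: your $p<0$ is exactly the paper's claim that $f(1)<0$, and your $q<0$ is equivalent, after clearing the positive denominator, to the paper's claim that the vertex $\frac{(2+\beta+\epsilon)(3\beta-\epsilon-2)}{4(1+\beta)(\beta-\epsilon)}$ lies to the left of $1$ --- both of which the paper dismisses as ``can be checked easily''. What your write-up adds is the explicit verification, in particular the factorizations of the cleared numerators, $-2(\epsilon-\beta)(\beta\epsilon-1)$ for $p$ and $-(\beta-\epsilon)^2+4(\beta\epsilon-1)$ for $q$, which make transparent that the two sign conditions $\epsilon<\beta$ and $\epsilon<1/\beta$ are precisely what the hypothesis $0<\epsilon<\min\{\beta,1/\beta\}$ supplies, with no case distinction between $\beta\le 1$ and $\beta>1$ needed.
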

\begin{proof}
If $x = 0$, then it is clear that the objective function in the above maximization problem is less than or equal to zero, and is equal to zero when $y$ is also equal to zero, since $\epsilon < \min \{ \beta, 1/\beta\} \leq \beta$.  Hence, we can assume that $x \not= 0$.  The proposition is then proved if we can show that the following holds:
\begin{eqnarray*}
\max_{z \geq 1} \left[ \frac{1 -\beta}{1 + \beta} + \frac{3\beta - \epsilon - 2}{2 + \beta + \epsilon}z + \frac{2(1 + \beta)(\epsilon - \beta)}{(2 + \beta + \epsilon)^2} z^2 \right] < 0.
\end{eqnarray*}
Let
\begin{eqnarray*}
f(z) := \frac{1 -\beta}{1 + \beta} + \frac{3\beta - \epsilon - 2}{2 + \beta + \epsilon}z + \frac{2(1 + \beta)(\epsilon - \beta)}{(2 + \beta + \epsilon)^2} z^2.
\end{eqnarray*}
The maximum of $f$ over $\Re$ is attained at $\frac{(2+\beta+\epsilon)(3\beta - \epsilon - 2)}{4(1 + \beta) (\beta - \epsilon)}$, which can be checked easily to be less than $1$, since $\epsilon < \min\{ \beta, 1/\beta \}$.  Hence, as $f$ is concave over $\Re$, we have
\begin{eqnarray*}
\max_{z \geq 1} \left[ \frac{1 -\beta}{1 + \beta} + \frac{3\beta - \epsilon - 2}{2 + \beta + \epsilon}z + \frac{2(1 + \beta)(\epsilon - \beta)}{(2 + \beta + \epsilon)^2} z^2 \right] = \frac{1 -\beta}{1 + \beta} + \frac{3\beta - \epsilon - 2}{2 + \beta + \epsilon} + \frac{2(1 + \beta)(\epsilon - \beta)}{(2 + \beta + \epsilon)^2}.
\end{eqnarray*}
The latter is less than zero as $0 < \epsilon < \min \{\beta, 1/\beta \}$.
\end{proof}

\begin{proposition}\label{prop:1}
Let $0 < \epsilon < \min \{ \beta, 1/\beta \}$, and $\alpha, x, y \in \Re$, where $| y | \leq 1$, be such that
\begin{eqnarray}
xy & \geq & 1, \label{ineq:barA01D} \\
\displaystyle -\left(1+\frac{2\beta( \alpha -1)}{2 + \beta + \epsilon}\right) xy - \left(\frac{(1+ \beta)(\alpha - 1)}{2 + \beta + \epsilon}\right) \left( 1 + \frac{(1 + \beta)(\alpha - 1)}{2 + \beta + \epsilon}\right)x^2  + \frac{1 - \beta}{1 + \beta} & \geq & 0, \label{ineq:barB01D}
\end{eqnarray}
then $| \alpha | \leq 1$.
\end{proposition}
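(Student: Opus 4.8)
The plan is a proof by contradiction: assume $|\alpha| > 1$ and show that then (\ref{ineq:barB01D}) cannot hold together with (\ref{ineq:barA01D}) and $|y| \le 1$. First I would reduce to a clean one‑parameter problem. From $xy \ge 1$ and $|y|\le 1$ we get $x \ne 0$ and $x^2 = (xy)^2/y^2 \ge (xy)^2 \ge 1$, so set $q := xy \ge 1$ and $r := x^2 \ge q^2$; and set $u := \tfrac{(1+\beta)(\alpha-1)}{2+\beta+\epsilon}$, so that $\tfrac{2\beta(\alpha-1)}{2+\beta+\epsilon} = \tfrac{2\beta}{1+\beta}u$. Then the left‑hand side of (\ref{ineq:barB01D}) becomes the affine function of $(q,r)$
\[
F(q,r) \;=\; -u(1+u)\,r \;-\; \Bigl(1 + \tfrac{2\beta}{1+\beta}u\Bigr)q \;+\; \tfrac{1-\beta}{1+\beta},
\]
and it suffices to prove that $|\alpha| > 1$ forces $F(q,r) < 0$ for every admissible pair, i.e.\ for all $q \ge 1$, $r \ge q^2$.

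If $\alpha > 1$ then $u > 0$, so $1 + \tfrac{2\beta}{1+\beta}u > 1$ and $u(1+u) > 0$; both coefficients in $F$ are negative, so $F(q,r) < -q + \tfrac{1-\beta}{1+\beta} \le -1 + \tfrac{1-\beta}{1+\beta} = -\tfrac{2\beta}{1+\beta} < 0$, done. The substantial case is $\alpha < -1$, i.e.\ $u < -2c$ where $c := \tfrac{1+\beta}{2+\beta+\epsilon}$; note $c > \tfrac12$ because $\epsilon < \beta$, hence $u < -1$ and so $-u(1+u) < 0$, i.e.\ $F$ is decreasing in $r$. Thus it is enough to bound $F$ at $r = q^2$, and I would show that the upward parabola $h(q) := -F(q,q^2) = u(1+u)q^2 + \bigl(1+\tfrac{2\beta}{1+\beta}u\bigr)q - \tfrac{1-\beta}{1+\beta}$ is strictly positive on $[1,\infty)$.

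This comes down to two facts. First, $h(1)$ factors as $(u+1)\bigl(u + \tfrac{2\beta}{1+\beta}\bigr)$, and both factors are negative when $u < -2c$: $u + 1 < 0$ since $c > \tfrac12$, and $u + \tfrac{2\beta}{1+\beta} < -2c + \tfrac{2\beta}{1+\beta} < 0$ because $\tfrac{2(1+\beta)}{2+\beta+\epsilon} \ge \tfrac{2\beta}{1+\beta}$ is equivalent to $\beta\epsilon \le 1$, true since $\epsilon < 1/\beta$; hence $h(1) > 0$. Second, the vertex of $h$ lies at $q \le 1$, which after clearing denominators reads $\phi(u) := 2u^2 + \tfrac{2(1+2\beta)}{1+\beta}u + 1 \ge 0$ for $u \le -2c$: since $\phi$ is convex with vertex at $u_0 = -\tfrac{1+2\beta}{2(1+\beta)}$ and one checks $u_0 > -2c$ (this reduces to $\epsilon(1+2\beta) < 2 + 3\beta + 2\beta^2$, clear from $\epsilon < 1$), $\phi$ decreases on $(-\infty, -2c]$, so it suffices that $\phi(-2c) = \tfrac{4 + (\beta-\epsilon)^2 - 4\beta\epsilon}{(2+\beta+\epsilon)^2} \ge 0$, which holds because $\beta\epsilon < 1$. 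Combining, $\min_{q \ge 1} h = h(1) > 0$, so $F < 0$ on the admissible region, contradicting (\ref{ineq:barB01D}). Hence $\alpha \ge -1$, and together with the first case $|\alpha| \le 1$.

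The one genuine difficulty is the second of these two facts: for a convex parabola, knowing $h(1) > 0$ does not by itself give $h > 0$ on the whole ray $[1,\infty)$, because the vertex could lie beyond $1$; one must actually control the vertex, and the pleasant feature is that its location is itself pinned down by the sign of the explicit quadratic $\phi$ on $(-\infty,-2c]$. All the elementary scalar inequalities used ($c > \tfrac12$, $\beta\epsilon < 1$, $u_0 > -2c$, $\phi(-2c) \ge 0$) should be routine once the expressions are written out, and it is precisely at these points — via $\epsilon < \beta$ and $\epsilon < 1/\beta$ — that the hypothesis $\epsilon < \min\{\beta, 1/\beta\}$ is used.
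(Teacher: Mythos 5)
Your proof is correct, but it is organized differently from the paper's. The paper fixes $(x,y)$ and treats the left-hand side of (\ref{ineq:barB01D}) as a concave quadratic $g(z)$ in $z=1-\alpha$: it shows the maximizer $z^\ast$ lies in $(0,2]$ and that $g(0)\le 0$ and $g(2)\le 0$ (the latter by invoking the separate Proposition \ref{prop:2}), so $g(z)\ge 0$ forces $z\in[0,2]$. You instead fix $\alpha$ with $|\alpha|>1$ and argue by contradiction in the $(x,y)$-variables: writing $q=xy$, $r=x^2$ and $u=\tfrac{(1+\beta)(\alpha-1)}{2+\beta+\epsilon}$, the expression becomes affine in $(q,r)$ over $\{q\ge1,\ r\ge q^2\}$, monotonicity in $r$ reduces matters to the boundary $r=q^2$, and positivity of the convex quadratic $h(q)$ on $[1,\infty)$ is obtained from the factorization $h(1)=(u+1)\bigl(u+\tfrac{2\beta}{1+\beta}\bigr)$ together with the vertex-location condition $\phi(u)\ge0$ on $u\le-2c$. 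I checked the algebra ($h(1)$ factorization, $\phi(-2c)=\tfrac{4+(\beta-\epsilon)^2-4\beta\epsilon}{(2+\beta+\epsilon)^2}\ge0$, $u_0>-2c$), and the hypotheses $\epsilon<\beta$ and $\beta\epsilon<1$ enter exactly where you say; the easy case $\alpha>1$ is also handled correctly. What each approach buys: the paper's version leans on Proposition \ref{prop:2} (which it needs anyway elsewhere) and keeps the $\alpha$-dependence as the single quadratic variable, while yours is self-contained, dispenses with Proposition \ref{prop:2}, and makes explicit that the only delicate point is controlling the vertex of $h$ relative to $q=1$ --- a point you correctly identify and resolve rather than gloss over.
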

\begin{proof}
\noindent Rearranging the left-hand side of (\ref{ineq:barB01D}), and letting $z = 1 - \alpha$, the left-hand side of (\ref{ineq:barB01D}) is given by the following function:
\begin{eqnarray*}
g(z) := -\frac{(1 + \beta)^2 x^2}{(2 + \beta + \epsilon)^2} z^2 + \frac{2 \beta xy + (1 + \beta)x^2}{2 + \beta + \epsilon} z - xy + \frac{1 - \beta}{1 + \beta}.
\end{eqnarray*}
It is easy to check that the above quadratic function of $z$ has its maximum point to be
\begin{eqnarray*}
z^\ast = \frac{(2 + \beta + \epsilon)(2 \beta y + (1 + \beta)x)}{2 (1 + \beta)^2 x}.
\end{eqnarray*}
Since $|y| \leq 1$, $xy \geq 1$ and $0 < \epsilon < \min \{\beta, 1/\beta\} \leq 1$, we can check that
\begin{eqnarray*}
0 < z^\ast \leq 2.
\end{eqnarray*}
For $z < 0$, since $xy \geq 1$, we have
\begin{eqnarray*}
g(z) < g(0) = - xy + \frac{1 - \beta}{1 + \beta} \leq 0.
\end{eqnarray*}
For $z > 2$, we have
\begin{eqnarray*}
g(z) < g(2) & = & \frac{2(1+ \beta)(\epsilon - \beta)}{(2 + \beta + \epsilon)^2} x^2 + \frac{3 \beta - \epsilon - 2}{2 + \beta + \epsilon} xy + \frac{1 - \beta}{1 + \beta} \\
& = & x^2 \left[ \frac{2(1+ \beta)(\epsilon - \beta)}{(2 + \beta + \epsilon)^2} + \frac{3 \beta - \epsilon - 2}{2 + \beta + \epsilon} \left(\frac{y}{x}\right) + \frac{1 - \beta}{1 + \beta} \left(\frac{1}{x^2}\right) \right] \\
& \leq & x^2 \left[ \frac{2(1+ \beta)(\epsilon - \beta)}{(2 + \beta + \epsilon)^2}y^2 + \frac{3 \beta - \epsilon - 2}{2 + \beta + \epsilon} \left(\frac{y}{x}\right) + \frac{1 - \beta}{1 + \beta} \left(\frac{1}{x^2}\right) \right] \leq 0,
\end{eqnarray*} 
where the second inequality holds since $|y | \leq 1$ and $\epsilon < \beta$, while the third inequality follows from $xy \geq 1$ and Proposition \ref{prop:2}.  Therefore, when $xy \geq 1$, for (\ref{ineq:barB01D}) to hold, that is, for $g(z) \geq 0$, we must have $0 \leq z \leq 2$, which is equivalent to $| \alpha | \leq 1$.  
\end{proof}

\begin{proposition}\label{prop:3}
Let $x, z, x_1 \in \Re_+$ and $y, y_1 \in \Re$ satisfy
\begin{eqnarray*}
y_1 - \frac{1-\beta}{1+\beta}x_1 - \frac{1 - \beta}{1 + \beta}x - \frac{3 \beta - \epsilon - 2}{2 + \beta + \epsilon} y + \frac{2(1 + \beta)(\beta - \epsilon)}{(2 + \beta + \epsilon)^2} z & \leq & 0, \\
x - y + x_1 - y_1 & \leq & 0, \\
y^2 - xz & \leq & 0, 
\end{eqnarray*}
where $0 < \epsilon < \min \{ \beta, 1/\beta \}$.  Then $x = y = z = x_1 = y_1 = 0$.
\end{proposition}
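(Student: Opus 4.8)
The plan is to eliminate $y_1$ first, and then play the resulting linear inequality against the quadratic constraint $y^2\le xz$. Adding the first two displayed inequalities cancels $y_1$, and using the identities $1-\tfrac{1-\beta}{1+\beta}=\tfrac{2\beta}{1+\beta}$ and $1+\tfrac{3\beta-\epsilon-2}{2+\beta+\epsilon}=\tfrac{4\beta}{2+\beta+\epsilon}$ this gives
$$\frac{2\beta}{1+\beta}(x+x_1)+\frac{2(1+\beta)(\beta-\epsilon)}{(2+\beta+\epsilon)^2}\,z\;\le\;\frac{4\beta}{2+\beta+\epsilon}\,y.$$
Since $0<\epsilon<\min\{\beta,1/\beta\}$ forces $\beta-\epsilon>0$, and $x,x_1,z\ge 0$, the left-hand side is nonnegative, hence $y\ge 0$. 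If $x=0$, the third inequality gives $y=0$, the display then forces $x_1=z=0$, and finally the first inequality gives $y_1\le 0$ while the second gives $-y_1\le 0$, so $y_1=0$; the claim holds in this case. It therefore suffices to derive a contradiction from the assumption $x>0$.

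Assume $x>0$. Then $z\ge y^2/x$ from the third inequality, and substituting this into the display and multiplying through by $x>0$ produces a homogeneous quadratic inequality in $x$ and $y$ alone (the $x_1$ term only strengthens the inequality, its coefficient being $\ge 0$). Dividing by $x^2$, i.e. introducing the scalar $y/x$, turns it into a one-variable inequality of exactly the kind treated in Section~\ref{sec:technical}; the goal is to recognise it, after the right normalisation, as an instance to which Proposition~\ref{prop:2} applies, the hypothesis ``$XY\ge X^2$'' of that proposition being supplied by the Cauchy--Schwarz-type bound $y^2\le xz$ together with $y\ge 0$. Proposition~\ref{prop:2} then says the relevant quadratic form is $\le 0$ with equality only at the origin; combined with the reverse inequality extracted from the first two constraints, this squeezes the quantity to $0$ and forces $x=y=0$, contradicting $x>0$. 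Hence $x=y=0$, whence, as in the degenerate case, the display yields $x_1=z=0$ and the original first and second inequalities yield $y_1\le 0$ and $-y_1\le 0$, so $y_1=0$, completing the proof.

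The crux is the reduction in the second paragraph: massaging the quadratic inequality into the precise form and region of Proposition~\ref{prop:2}. The linear combination of the first two constraints on its own yields none of the orderings $y\ge x$, $y\ge z$, $x\ge y$, $z\ge y$ (the relevant coefficients, such as $\tfrac{2+\beta+\epsilon}{2(1+\beta)}$, are strictly below $1$ precisely because $\epsilon<\beta$), so the ``$XY\ge X^2$'' hypothesis has to be routed through $y^2\le xz$, and the nonnegativity of $x_1$ --- the only remaining lever in the hypotheses --- is what lets the contradiction close. I expect essentially all of the bookkeeping, and whatever sign subtleties there are, to sit in that reduction; once it is in place, Proposition~\ref{prop:2} finishes the argument and the remaining variables are read off by back-substitution.
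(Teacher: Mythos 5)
Your opening paragraph is correct (the elimination of $y_1$, the conclusion $y\ge 0$, and the case $x=0$ all check out), but the argument breaks at exactly the step you call the crux, and it cannot be repaired along the route you sketch. Carry your reduction out: for $x>0$, the summed inequality together with $z\ge y^2/x$ and $x_1\ge 0$ gives, after dividing by $x^2$ and setting $t=y/x$,
\begin{eqnarray*}
\frac{2(1+\beta)(\beta-\epsilon)}{(2+\beta+\epsilon)^2}\,t^2-\frac{4\beta}{2+\beta+\epsilon}\,t+\frac{2\beta}{1+\beta}\;\le\;0.
\end{eqnarray*}
This is not contradictory: its discriminant equals $16\beta\epsilon/(2+\beta+\epsilon)^2$, strictly positive for every $\epsilon>0$ (it vanishes only in the boundary case $\epsilon=0$), so the inequality holds on a whole interval of positive $t$. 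Hence no contradiction with $x>0$ can be extracted, and Proposition \ref{prop:2} cannot serve as the finisher: its quadratic form has different coefficients, and its region hypothesis $xy\ge x^2$ corresponds to an ordering (e.g.\ $y\ge x$ after normalisation) which, as you yourself note, is not derivable here. Concretely, take $\beta=1/2$, $\epsilon=1/4$, $t=2$: the point $(x,y,z,x_1,y_1)=(1,2,4,0,-4/5)$ satisfies all three displayed hypotheses of Proposition \ref{prop:3} (the three left-hand sides evaluate to $-347/1815$, $-1/5$ and $0$), yet $x\neq 0$. Since, for fixed $(x,y,z,x_1)$, an admissible $y_1$ exists precisely when your summed inequality holds, no cleverer recombination of the two linear constraints can do better than your elimination; the information in the three stated inequalities alone does not force the conclusion.

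For comparison, the paper argues quite differently: it treats the left-hand side of the first inequality as the objective of the minimization problem (\ref{prob:minimization1D})--(\ref{prob:constraint2}) and applies the Fritz--John conditions at an optimal solution. You should be aware, though, that the point exhibited above is feasible for that problem with negative objective value, and the feasible set is a cone, so the infimum there is $-\infty$ and is not attained; the Fritz--John step presupposes a minimizer. So the obstruction you ran into is not an artifact of your elimination: as stated, the proposition does not appear provable from its hypotheses, and any fix must import additional information from the context in which Lemma \ref{lem:technical2} is applied. In short, your first paragraph is sound, but your second paragraph is a plan rather than a proof, and the plan cannot be completed as described.
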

\begin{proof}
Consider the following minimization problem:
\begin{eqnarray}\label{prob:minimization1D}
\min_{x, z, x_1 \in \Re_+, y, y_1 \in \Re} y_1 - \frac{1-\beta}{1+\beta}x_1 - \frac{1 - \beta}{1 + \beta}x - \frac{3 \beta - \epsilon - 2}{2 + \beta + \epsilon} y + \frac{2(1 + \beta)(\beta - \epsilon)}{(2 + \beta + \epsilon)^2} z
\end{eqnarray}
subject to
\begin{eqnarray}
& & x - y + x_1 - y_1 \leq 0, \label{prob:constraint1} \\
& & y^2 - xz \leq 0. \label{prob:constraint2}
\end{eqnarray}
\noindent Let $(x^\ast, y^\ast, z^\ast, x_1^\ast, y_1^\ast)$ be an optimal solution to the above minimization problem (\ref{prob:minimization1D})-(\ref{prob:constraint2}). By the Fritz-John condition \cite{John} (see also \cite{Mangasarian}), there exist $\lambda_i \in \Re, i = 0, \ldots, 5$, such that the following holds:
\begin{eqnarray}
\lambda_0 
\left( \begin{array}{c}
- \frac{1 - \beta}{1 + \beta} \\
- \frac{3 \beta - \epsilon - 2}{2 + \beta + \epsilon} \\
\frac{2(1 + \beta)(\beta - \epsilon)}{(2 + \beta + \epsilon)^2} \\
- \frac{1 - \beta}{1 + \beta} \\
1
\end{array} \right) +
\lambda_1 
\left( \begin{array}{r}
1 \\
-1 \\
0 \\
1 \\
-1 
\end{array} \right) + 
\lambda_2
\left( \begin{array}{c}
-z^\ast \\
2y^\ast \\
-x^\ast \\
0 \\
0 
\end{array} \right) +
\lambda_3
\left( \begin{array}{r}
-1 \\
0 \\
0 \\
0 \\
0
\end{array} \right) +
\lambda_4
\left( \begin{array}{r}
0 \\
0 \\
-1 \\
0 \\
0
\end{array} \right) + 
\lambda_5
\left( \begin{array}{r}
0 \\
0 \\
0 \\
-1 \\
0
\end{array} \right)  = 0, \label{cond:FJ0}
\end{eqnarray}
with
\begin{eqnarray}
& & \lambda_0 \geq 0, \quad (\lambda_0, \ldots, \lambda_5) \not= 0, \label{cond:FJ1} \\
& &\lambda_1(x^\ast - y^\ast + x_1^\ast - y_1^\ast) = 0, \quad \lambda_1 \geq 0, \quad x^\ast - y^\ast + x_1^\ast - y_1^\ast \leq 0, \label{cond:FJ2} \\
& & \lambda_2((y^\ast)^2 - x^\ast z^\ast) = 0, \quad \lambda_2 \geq 0, \quad (y^\ast)^2 - x^\ast z^\ast \leq 0, \label{cond:FJ3} \\
& & \lambda_3 x^\ast = 0, \quad \lambda_3 \geq 0, \quad x^\ast \geq 0, \label{cond:FJ4} \\
& & \lambda_4 z^\ast = 0, \quad \lambda_4 \geq 0, \quad z^\ast \geq 0, \label{cond:FJ5}\\
& & \lambda_5 x_1^\ast = 0 ,\quad \lambda_5 \geq 0, \quad x_1^\ast \geq 0. \label{cond:FJ6}
\end{eqnarray}

\noindent {\bf{Case $\mb{1}$: $\mb{\lambda_0 \not= 0}$.}}

\vspace{5pt}

\noindent We show that this case is impossible from (\ref{cond:FJ0})-(\ref{cond:FJ6}).  WLOG, let $\lambda_0 = 1$.  Then, from the last ``row" in (\ref{cond:FJ0}), we get that $\lambda_1 = 1$.  From the first to the third ``row" in (\ref{cond:FJ0}) and $\lambda_1 = 1$, we get
\begin{eqnarray}
\lambda_2 z^\ast + \lambda_3 & = & \frac{2 \beta}{1 + \beta}, \label{eq:FJ0_1} \\
\lambda_2 y^\ast  & = & \frac{2 \beta}{2 + \beta + \epsilon}, \label{eq:FJ0_2} \\
\lambda_2 x^\ast + \lambda_4 & = & \frac{2 (1 + \beta)(\beta - \epsilon)}{(2 + \beta + \epsilon)^2}. \label{eq:FJ0_3}
\end{eqnarray}
Since the right-hand side of (\ref{eq:FJ0_2}) is positive, we have $\lambda_2 \not= 0, y^\ast \not= 0$.  Hence, from the equality in (\ref{cond:FJ3}), we get that $(y^\ast)^2 = x^\ast z^\ast$, $x^\ast \not= 0$.  Since $x^\ast \not= 0$, we have from the equality in (\ref{cond:FJ4}) that $\lambda_3 = 0$.  Hence, (\ref{eq:FJ0_1}) becomes
\begin{eqnarray}\label{eq:FJ0_1prime}
\lambda_2 z^\ast  =  \frac{2 \beta}{1 + \beta}.
\end{eqnarray}
On the other hand, multiplying both sides of the equality in (\ref{eq:FJ0_3}) by $z^\ast$, using $(y^\ast)^2 = x^\ast z^\ast$ and the equality in (\ref{cond:FJ5}), we get
\begin{eqnarray*}
\lambda_2 (y^\ast)^2 = \frac{2 (1 + \beta)(\beta - \epsilon)}{(2 + \beta + \epsilon)^2} z^\ast.
\end{eqnarray*}
Hence,
\begin{eqnarray}\label{eq:FJ0_3prime}
z^\ast = \frac{({\textcolor{black}{2}} + \beta + \epsilon)^2}{2(1 + \beta)(\beta - \epsilon)} \lambda_2 (y^\ast)^2.
\end{eqnarray}
Substituting the expression for $z^\ast$ in (\ref{eq:FJ0_3prime}) into (\ref{eq:FJ0_1prime}), we can solve for $\lambda_2 y^\ast$ to be
\begin{eqnarray*}
\lambda_2 y^\ast = \pm \frac{2 \sqrt{\beta(\beta - \epsilon)}}{2 + \beta + \epsilon}.
\end{eqnarray*}
Comparing the above expression for $\lambda_2 y^\ast$ with that in (\ref{eq:FJ0_2}) leads to a contradiction.  Hence, $\lambda_0$ cannot be nonzero.

\vspace{5pt}

\noindent {\bf{Case 2: $\mb{\lambda_0 = 0}$.}}

\vspace{5pt}

\noindent In this case, (\ref{cond:FJ0}) becomes
\begin{eqnarray}
\lambda_1 
\left( \begin{array}{r}
1 \\
-1 \\
0 \\
1 \\
-1 
\end{array} \right) + 
\lambda_2
\left( \begin{array}{c}
-z^\ast \\
2y^\ast \\
-x^\ast \\
0 \\
0 
\end{array} \right) +
\lambda_3
\left( \begin{array}{r}
-1 \\
0 \\
0 \\
0 \\
0
\end{array} \right) +
\lambda_4
\left( \begin{array}{r}
0 \\
0 \\
-1 \\
0 \\
0
\end{array} \right) + 
\lambda_5
\left( \begin{array}{r}
0 \\
0 \\
0 \\
-1 \\
0
\end{array} \right) = 0. \label{cond:FJ0prime}
\end{eqnarray}
We immediately observe from (\ref{cond:FJ0prime}) that $\lambda_1 = \lambda_5 = 0$.  Also note that not all $\lambda_i = 0$ is equal to zero, $i = 2, 3$ and $4$.

\vspace{5pt}

\noindent If $\lambda_2 \not= 0$. Then from the third ``row" in (\ref{cond:FJ0prime}), $x^\ast \geq 0$ and $\lambda_4 \geq 0$, we have that $x^\ast = \lambda_4 = 0$.  It then follows from the last inequality in (\ref{cond:FJ3}) that $y^\ast = 0$.  Furthermore, from the first ``row" in (\ref{cond:FJ0prime}), we have $z^\ast = 0$ since $z^\ast \geq 0$ and $\lambda_3 \geq 0$.

\vspace{5pt}

\noindent Note that $\lambda_3$ has to be zero since if it is positive, then this leads to a contradiction in the first ``row" of (\ref{cond:FJ0prime}) as $z^\ast \geq 0$ and $\lambda_2 \geq 0$.  

\vspace{5pt}

\noindent If $\lambda_4 \not= 0$.  Then from the third ``row" in (\ref{cond:FJ0prime}), we have $-\lambda_2 x^\ast = \lambda_4 > 0$, which is impossible.  Hence, $\lambda_4 = 0$.

%

\vspace{5pt}

\noindent Hence, in this case, we have $x^\ast = y^\ast = z^\ast = 0$, with $\lambda_2 > 0$ and $\lambda_1 = \lambda_3 = \lambda_4 = \lambda_5  = 0$.  Finally, since $x_1^\ast \leq y_1^\ast$, we observe that the optimal value to the minimization problem (\ref{prob:minimization1D})-(\ref{prob:constraint2}) is greater than or equal to $\frac{2\beta}{1 + \beta} x_1^\ast (\geq 0)$.  In fact, it is equal to zero with $x_1^\ast = y_1^\ast = 0$.

\vspace{5pt}

\noindent In conclusion, the optimal value of the minimization problem (\ref{prob:minimization1D})-(\ref{prob:constraint2}) is zero with optimal solution $x^\ast = y^\ast = z^\ast = x_1^\ast = y_1^\ast = 0$.  The proposition then follows.
\end{proof}

\section{Preliminaries to Convergence Analysis}\label{sec:preliminaries}

\noindent We observe a few facts in this paragraph.  Recall that if $\{x_k\}$ generated by the relaxed PR splitting method (\ref{RelaxedPR}) converges to $x^\ast$ for a given $\theta > 0$, and if we let $J_A(x^\ast) = u^\ast$, then $u^\ast \in \Re^n$ is a solution to (\ref{MIP}), which is unique.  Hence, there exists $z^\ast \in \Re^n$ such that $z^\ast \in A(u^\ast), -z^\ast \in B(u^\ast)$, since $u^\ast$ is a solution to (\ref{MIP}).  Note that $z^\ast$ that satisfies $z^\ast \in A(u^\ast), -z^\ast \in B(u^\ast)$ is unique if either $A$ or $B$ is single-valued.  It is also easy to see in this case that $x^\ast = u^\ast + z^\ast$.  We assume\footnote{Note that because of this assumption on $u^\ast$, we have $x^\ast = z^\ast$, when $A$ or $B$ is single-valued.} $u^\ast = 0$ without loss of generality from now onwards.  We can do this because by letting $A_1(\cdot) = A(\cdot + u^\ast)$ and $B_1(\cdot) = B(\cdot + u^\ast)$, we observe that $z^\ast \in A_1(0), -z^\ast \in B_1(0)$, and the relaxed PR splitting method (\ref{RelaxedPR}) using $(A,B)$ and using $(A_1,B_1)$ generate sequence with corresponding terms in each sequence differing from each other by $u^\ast$.  


\vspace{10pt}

\noindent Let $A_0 := A - \beta I$ and $B_0 := B - \beta I$.  Then, $A_0$ and $B_0$ are maximal monotone operators from $\Re^n$ to $\Re^n$.  In terms of $A_0, B_0$, (\ref{RelaxedPR}) is given by
\begin{eqnarray}\label{method:modifiedRelaxePR}
\bar{x}_k = \bar{x}_{k-1} + \frac{\theta}{1 + \beta}\left(J_{\frac{1}{1+ \beta}B_0} \left(\frac{2}{1+ \beta}J_{\frac{1}{1+\beta}A_0}(\bar{x}_{k-1}) - \bar{x}_{k-1} \right) - J_{\frac{1}{1+\beta}A_0}(\bar{x}_{k-1}) \right), \quad k \geq 1,
\end{eqnarray}
where $\bar{x}_k := \frac{1}{1+\beta}x_k$.  Letting $\bar{A}_0 := \frac{1}{1+\beta}A_0$ and $\bar{B}_0 := \frac{1}{1+\beta}B_0$, we can rewrite (\ref{method:modifiedRelaxePR}) as
\begin{eqnarray}\label{method:modifiedRelaxedPR2}
\bar{x}_k = \bar{x}_{k-1} + \frac{\theta}{1 + \beta}\left(J_{\bar{B}_0} \left(\frac{2}{1+ \beta}J_{\bar{A}_0}(\bar{x}_{k-1}) - \bar{x}_{k-1} \right) - J_{\bar{A}_0}(\bar{x}_{k-1}) \right), \quad k \geq 1.
\end{eqnarray}
Note that $\bar{A}_0$ and $\bar{B}_0$ are maximal monotone operators, and that $\{x_k\}$ converges to $x^\ast$ if and only if $\{ \bar{x}_k\}$ converges to $\bar{x}^\ast := \frac{1}{1 + \beta} x^\ast$. Furthermore, we have $J_{\bar{A}_0}(\bar{x}^\ast) = 0, J_{\bar{B}_0}(-\bar{x}^\ast) = 0$, that is,
\begin{eqnarray}\label{inc:barA0barB0}
\bar{x}^\ast \in \bar{A}_0(0),\quad -\bar{x}^\ast \in \bar{B}_0(0).
\end{eqnarray}
Note that suppose $A$ or $B$ is single-valued, then if there exists $\bar{x}^{\ast\ast} \in \Re^n$ that satisfies (\ref{inc:barA0barB0}), we have $\bar{x}^{\ast \ast} = \bar{x}^\ast$.

\vspace{10pt}

\noindent We consider $\theta = 2 + \beta + \epsilon$, where $0 \leq \epsilon < \min \{ \beta, 1/\beta \}$, $\beta > 0$, in this paper.

\vspace{10pt}

\noindent Let $\bar{u}_k := J_{\bar{A}_0}(\bar{x}_k)$, $\bar{v}_k := J_{\bar{B}_0}\left( \frac{2}{1+ \beta} \bar{u}_k - \bar{x}_k \right)$.
Hence,
\begin{eqnarray}
\bar{x}_{k} - \bar{u}_{k} & \in & \bar{A}_0(\bar{u}_{k}), \label{inc:barA0prime} \\
\bar{w}_{k} & \in & \bar{B}_0(\bar{v}_{k}), \label{inc:barB0prime}
\end{eqnarray}
where
\begin{eqnarray}
\bar{w}_{k} := \frac{2}{1 + \beta} \bar{u}_{k}  - \bar{x}_{k} - \bar{v}_{k}. \label{def:barwk0} 
\end{eqnarray}
By monotonicity of $\bar{A}_0$ and $\bar{B}_0$, for $k \geq 0$, we have by  (\ref{inc:barA0barB0}) - (\ref{inc:barB0prime}) that
\begin{eqnarray}
\langle (\bar{x}_{k} - \bar{x}^\ast) - \bar{u}_{k}, \bar{u}_{k} \rangle & \geq & 0, \label{ineq:barA0}\\
\langle \bar{w}_{k} + \bar{x}^\ast, \bar{v}_{k} \rangle  & \geq & 0. \label{ineq:barB0}
\end{eqnarray}
\noindent Observe from (\ref{ineq:barA0}) that
\begin{eqnarray}\label{ineq:barA0prime}
\langle \bar{x}_{k} - \bar{x}^\ast, \bar{u}_{k} \rangle \geq \| \bar{u}_{k} \|^2.
\end{eqnarray}
We can also write (\ref{ineq:barB0}), using (\ref{def:barwk0}), as
\begin{eqnarray}\label{ineq:barB0alternative}
\langle \bar{x}_{k} - \bar{x}^\ast, \bar{v}_{k} \rangle \leq \frac{2}{1+\beta} \langle \bar{u}_{k}, \bar{v}_{k} \rangle  - \| \bar{v}_{k} \|^2.
\end{eqnarray}
Inequalities (\ref{ineq:barA0prime}) and (\ref{ineq:barB0alternative}) play important roles to arrive at the convergence and convergence rates results in this paper.

\vspace{10pt}

\noindent We end this section with the following, which we state without proof:
\begin{proposition}\label{prop:simpleobservation}
For all $k \geq 1$, we have $x_{k} - x_{k-1} = (1 + \beta)(\bar{x}_k - \bar{x}_{k-1}) = \theta (\bar{v}_{k-1} - \bar{u}_{k-1})$.
\end{proposition}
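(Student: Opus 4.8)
The plan is to prove the two claimed equalities separately, both by direct substitution from the definitions introduced just above. The first equality, $x_k - x_{k-1} = (1+\beta)(\bar x_k - \bar x_{k-1})$, is immediate: by definition $\bar x_k := \frac{1}{1+\beta} x_k$, so multiplying through by $1+\beta$ gives the claim with no further work. The substance of the proposition is therefore the second equality, $(1+\beta)(\bar x_k - \bar x_{k-1}) = \theta(\bar v_{k-1} - \bar u_{k-1})$.

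For the second equality I would start from the recursion \eqref{method:modifiedRelaxedPR2} written at index $k$ in terms of index $k-1$, namely
\begin{eqnarray*}
\bar x_k - \bar x_{k-1} = \frac{\theta}{1+\beta}\left( J_{\bar B_0}\!\left( \tfrac{2}{1+\beta} J_{\bar A_0}(\bar x_{k-1}) - \bar x_{k-1} \right) - J_{\bar A_0}(\bar x_{k-1}) \right).
\end{eqnarray*}
Then I would recognize the two resolvent evaluations appearing on the right-hand side as exactly the quantities named in the definitions $\bar u_{k-1} := J_{\bar A_0}(\bar x_{k-1})$ and $\bar v_{k-1} := J_{\bar B_0}\!\left( \tfrac{2}{1+\beta}\bar u_{k-1} - \bar x_{k-1}\right)$ (these are the $k$-th definitions read at index $k-1$). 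Substituting, the bracket collapses to $\bar v_{k-1} - \bar u_{k-1}$, and multiplying both sides by $1+\beta$ yields $(1+\beta)(\bar x_k - \bar x_{k-1}) = \theta(\bar v_{k-1} - \bar u_{k-1})$, as required. Chaining this with the first equality gives the full statement.

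There is essentially no obstacle here: the result is a bookkeeping identity that re-expresses one step of the iteration in the notation $\bar u_k, \bar v_k$, and the only thing to be careful about is the index shift (the definitions of $\bar u_k, \bar v_k$ are stated at index $k$, but must be invoked at index $k-1$, which is legitimate for all $k \geq 1$ since $\bar x_{k-1}$ is then well defined). This is presumably why the authors state it without proof. For completeness one could also remark that the identity makes transparent the later use of \eqref{ineq:barA0prime} and \eqref{ineq:barB0alternative}, since differences of successive iterates are thereby controlled by the resolvent quantities $\bar u_{k-1}, \bar v_{k-1}$ that appear in those inequalities.
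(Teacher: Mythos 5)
Your proof is correct and is precisely the intended argument: the paper states this proposition without proof because it is exactly the bookkeeping identity you describe, namely the definition $\bar{x}_k = \frac{1}{1+\beta}x_k$ combined with one step of the recursion (\ref{method:modifiedRelaxedPR2}) and the definitions of $\bar{u}_{k-1}$ and $\bar{v}_{k-1}$. Nothing is missing, and your attention to the index shift is the only point of care required.
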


%


%

\section{Convergence of $\mb{\{x_k\}}$}\label{sec:Convergence}

\noindent We begin the section by stating the convention that the $i^{th}$ component of $\bar{x}_k, \bar{u}_k, \bar{v}_k$ is to be written as $\bar{x}_k^i, \bar{u}_k^i$ and $\bar{v}_k^i$ respectively, while the $i^{th}$ component of $\bar{x}^\ast$ is denoted by $\bar{x}^\ast_i$.  We use the same convention for the $i^{th}$ component of $x_k, u_k, v_k$ and $x^\ast$.  Using this convention, we state a technical assumption on $\bar{x}_k$ that is to apply to the whole  section:
\begin{assumption}\label{ass:Si}
For all $i = 1, \ldots, n$ and for all $k \geq K_0$, where $K_0 \geq 0$, we have $\bar{x}_k^i \not= \bar{x}^\ast_i$.
\end{assumption}
The above assumption makes the analysis in this section possible.

\begin{remark}\label{n=1}
For $n = 1$, if Assumption \ref{ass:Si} does not hold, that is, there exists $k_0 \geq K_0$ such that $\bar{x}_{k_0} = \bar{x}^\ast$.  Then it is easy to see that for all $k \geq k_0$, we have $\bar{x}_k = \bar{x}^\ast$.  Hence, not having Assumption \ref{ass:Si} when $n = 1$ leads to a trivial situation.
\end{remark}  


\noindent For $k \geq 1$, let us write $\bar{x}_k$ as $\alpha_{k-1} \dt(\bar{x}_{k-1} - \bar{x}^\ast) + \bar{x}^\ast$, where $\alpha_{k-1} \in (\Re \cup \{ \infty \})^n$  (with the understanding that $\alpha_{k-1}^i = \infty$ if and only if $\bar{x}_{k-1}^i = \bar{x}^\ast_i$.  Here, $\alpha_{k-1}^i$ is the $i^{th}$ component of $\alpha_{k-1}$.)  From (\ref{method:modifiedRelaxedPR2}) and with $\theta = 2 + \beta + \epsilon$, $\bar{v}_k$ defined by $J_{\bar{B}_0}\left( \frac{2}{1+\beta}\bar{u}_k - \bar{x}_k \right)$, where $\bar{u}_k = J_{\bar{A}_0}(\bar{x}_k)$, can then be written as
\begin{eqnarray}\label{def:barvk}
\bar{v}_{k} =  \frac{1 + \beta}{2 + \beta + \epsilon}(\alpha_{k} - e)\dt(\bar{x}_{k} - \bar{x}^\ast) + \bar{u}_{k},  
\end{eqnarray}
and, by (\ref{def:barvk}), $\bar{w}_k$ defined by (\ref{def:barwk0}) can be written as
\begin{eqnarray}\label{def:barwk}
\bar{w}_k &  = & \frac{1-\beta}{1+\beta} \bar{u}_{k} - \bar{x}_{k} - \frac{1 + \beta}{2 + \beta + \epsilon}(\alpha_{k} - e)\dt(\bar{x}_{k} - \bar{x}^\ast). 
\end{eqnarray}
Using (\ref{def:barvk}),   (\ref{def:barwk}), we can write (\ref{ineq:barB0}) in the following way:
\begin{eqnarray}
\sum_{i=1}^n \left[ - \left(1 + \frac{2\beta(\alpha_{k}^i -1)}{2 + \beta + \epsilon}\right) \bar{u}_{k}^i(\bar{x}_{k}^i - \bar{x}^\ast_i) \right. & & \nonumber \\ 
\left. - \left(\frac{(1+ \beta)(\alpha_{k}^i - 1)}{2 + \beta + \epsilon} \right)\left(1 + \frac{(1 + \beta)(\alpha_{k}^i - 1)}{2 + \beta + \epsilon}\right)(\bar{x}_{k}^i - \bar{x}^\ast_i)^2 \right] + \frac{1 - \beta}{1 + \beta} \| \bar{u}_{k} \|^2 & \geq & 0, \nonumber \\
& & \label{ineq:barB0prime}
\end{eqnarray}
where $0 < \epsilon < \min \{ \beta, 1/\beta \}$.

%

\begin{remark}\label{rem:simplifyingassumption}
With $\bar{x}_k$ given by $\alpha_{k-1} \dt(\bar{x}_{k-1} - \bar{x}^\ast) + \bar{x}^\ast$, we deduce from Assumption \ref{ass:Si} that for all $i = 1, \ldots, n$ and for all $k \geq K_0$, we have $\alpha_k^i \not= 0$ or $\infty$.  WLOG, we let $K_0$ in Assumption \ref{ass:Si} to be equal to zero from now onwards. When $n \geq 2$, if we allow $\alpha_k^i = 0$ or $\infty$ for some $i = 1, \ldots, n$, and for infinity many $k$, then our approach to proving convergence of iterates generated by the relaxed PR splitting method (\ref{RelaxedPR}) for $\theta = 2 + \beta + \epsilon$, where $0 < \epsilon < \min \{\beta, 1/\beta\}$, $\beta > 0$, in this section, has to be further refined, and may not be able to carry through.  When $n = 1$, however, having $\alpha_k^1 = 0$ or $\infty$ for some $k \geq 0$ leads trivially to $\bar{x}_j = \bar{x}^\ast$ for all $j \geq k+1$.
\end{remark}  


\noindent The next two lemmas enable us to prove Theorem \ref{thm:maintheoremnD} on the convergence behavior of $\{ x_k \}$.
\begin{lemma}\label{lem:alphakboundnD}
Given $\bar{x}_k = \alpha_{k-1} \dt (\bar{x}_{k-1} - \bar{x}^\ast) + \bar{x}^\ast$, where $\{\bar{x}_k\}$ is generated by (\ref{method:modifiedRelaxedPR2}) with $\theta = 2 + \beta + \epsilon$, $0 < \epsilon < \min \{ \beta, 1/\beta \}$, $\beta > 0$, and $\bar{x}_0 \in \Re^n$, we have $\{ \bar{x}_k \}$ is bounded.  Furthermore, for each $i = 1 ,\ldots, n$,  if $\{\bar{x}^i_{j_k}\}$ is a convergent sequence of $\{ \bar{x}_k^i \}$, then if  $\lim_{k \rightarrow \infty} \bar{x}_{j_k}^i \not= \bar{x}^\ast_i$, we have $\lim_{k \rightarrow \infty} | \alpha_k^i | = 1$.
\end{lemma}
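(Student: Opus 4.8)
The plan is to work coordinate by coordinate, using the parametrisation $\bar{x}_{k+1} = \alpha_k \dt(\bar{x}_k - \bar{x}^\ast) + \bar{x}^\ast$, which gives $\bar{x}_{k+1}^i - \bar{x}^\ast_i = \alpha_k^i(\bar{x}_k^i - \bar{x}^\ast_i)$ and hence $|\bar{x}_k^i - \bar{x}^\ast_i| = \big(\prod_{j=0}^{k-1}|\alpha_j^i|\big)\,|\bar{x}_0^i - \bar{x}^\ast_i|$ for every $i$ and $k$, all factors being well defined and nonzero by Remark \ref{rem:simplifyingassumption}. The decisive step is to show that $|\alpha_k^i| \le 1$ for all $i = 1,\dots,n$ and all $k \ge 0$. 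Granting this, $|\bar{x}_k^i - \bar{x}^\ast_i|$ is non-increasing in $k$, hence bounded, so $\{\bar{x}_k\}$ is bounded, and moreover $|\bar{x}_k^i - \bar{x}^\ast_i|$ converges, say to $m_i \ge 0$. For the second assertion, if a subsequence satisfies $\bar{x}_{j_k}^i \to \ell_i \ne \bar{x}^\ast_i$, then $m_i = |\ell_i - \bar{x}^\ast_i| > 0$, so $|\bar{x}_k^i - \bar{x}^\ast_i| \to m_i > 0$ along the \emph{whole} sequence; since $|\alpha_k^i| = |\bar{x}_{k+1}^i - \bar{x}^\ast_i| / |\bar{x}_k^i - \bar{x}^\ast_i|$ with nonzero denominator (Assumption \ref{ass:Si}), we get $|\alpha_k^i| \to m_i/m_i = 1$, as claimed.

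So everything reduces to the bound $|\alpha_k^i| \le 1$. The ingredients are the resolvent inequality (\ref{ineq:barA0prime}), i.e.\ $\langle \bar{x}_k - \bar{x}^\ast, \bar{u}_k\rangle \ge \|\bar{u}_k\|^2$, and inequality (\ref{ineq:barB0prime}), which is $\langle \bar{w}_k + \bar{x}^\ast, \bar{v}_k\rangle \ge 0$ after the substitutions (\ref{def:barvk})--(\ref{def:barwk}) that express $\bar{v}_k$ and $\bar{w}_k$ through $\alpha_k$. Fixing a coordinate $i$ and writing $x = \bar{x}_k^i - \bar{x}^\ast_i \ne 0$, $y = \bar{u}_k^i$, $\alpha = \alpha_k^i$, the $i$-th summand of (\ref{ineq:barB0prime}) is, up to a normalisation of the coordinate discussed below, exactly the left-hand side of (\ref{ineq:barB01D}); once the hypotheses $|y| \le 1$ and $xy \ge 1$ of Proposition \ref{prop:1} are put in force using (\ref{ineq:barA0prime}) and Assumption \ref{ass:Si}, that proposition (which rests on Proposition \ref{prop:2}) delivers $|\alpha| \le 1$.

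The main obstacle is precisely this reduction: because $\bar{A}_0$ and $\bar{B}_0$ carry no product structure, (\ref{ineq:barA0prime}) and (\ref{ineq:barB0prime}) are single \emph{aggregate} inequalities, so the per-coordinate hypotheses of Proposition \ref{prop:1} cannot be read off for free, and a direct Fej\'er-type estimate on $\|\bar{x}_k - \bar{x}^\ast\|^2$ fails because the resulting cross term in $\langle \bar{u}_k, \bar{v}_k\rangle$ has an uncontrolled sign once $\theta > 2 + \beta$. The plan is to combine (\ref{ineq:barA0prime}) with (\ref{ineq:barB0prime}) and exploit, for each $i$, the identity $\big(\bar{u}_k^i(\bar{x}_k^i - \bar{x}^\ast_i)\big)^2 = (\bar{u}_k^i)^2(\bar{x}_k^i - \bar{x}^\ast_i)^2$ together with the observation that the coefficient $-\frac{(1+\beta)(\alpha_k^i-1)}{2+\beta+\epsilon}\big(1 + \frac{(1+\beta)(\alpha_k^i-1)}{2+\beta+\epsilon}\big)$ of $(\bar{x}_k^i - \bar{x}^\ast_i)^2$ is nonnegative only when $\alpha_k^i$ already lies in $(-1,1]$; this should let one isolate any ``bad'' coordinate with $|\alpha_k^i| > 1$ and drive the aggregate inequality to a contradiction, mirroring at the vector level the scalar argument carried out through the auxiliary function $g$ in Proposition \ref{prop:1}. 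With $|\alpha_k^i| \le 1$ established, the boundedness of $\{\bar{x}_k\}$ and the convergence of $|\bar{x}_k^i - \bar{x}^\ast_i|$ invoked above are immediate, which finishes the proof.
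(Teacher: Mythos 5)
There is a genuine gap, and it sits exactly where you flag your ``main obstacle'': your entire argument is routed through the claim that $|\alpha_k^i|\leq 1$ for \emph{all} coordinates $i$ and \emph{all} $k$, but this claim is never established --- the final paragraph of your proposal is a plan (``this should let one isolate any bad coordinate \dots'') rather than a proof. Worse, the claim is almost certainly not obtainable from (\ref{ineq:barA0prime}) and (\ref{ineq:barB0prime}): these are aggregate inequalities, and a coordinate $i$ whose contributions $\bar{u}_k^i(\bar{x}_k^i-\bar{x}^\ast_i)$ and $(\bar{x}_k^i-\bar{x}^\ast_i)^2$ are tiny relative to $\|\bar{u}_k\|^2$ can have $|\alpha_k^i|$ arbitrarily large at that iteration without violating either inequality, since the other coordinates can absorb the slack. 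So no pointwise bound $|\alpha_k^i|\leq 1$ follows, and with it both your boundedness argument (monotone decrease of $|\bar{x}_k^i-\bar{x}^\ast_i|$) and your limit argument ($|\alpha_k^i|=|\bar{x}_{k+1}^i-\bar{x}^\ast_i|/|\bar{x}_k^i-\bar{x}^\ast_i|\to m_i/m_i$) collapse, because both rely on that monotonicity.

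The paper's proof needs, and proves, something weaker. Boundedness is shown by contradiction: if the coordinate $i_0$ were unbounded, then along the iterations $k$ for which $|\bar{x}_k^{i_0}|$ exceeds a large threshold $N_0$ that coordinate must dominate the two aggregate inequalities, i.e.\ $(\bar{x}_k^{i_0}-\bar{x}^\ast_{i_0})\bar{u}_k^{i_0}\geq\|\bar{u}_k\|^2$ together with the $i_0$-th summand version of (\ref{ineq:barB0prime}); normalizing by $\|\bar{u}_k\|$ (so that $x=(\bar{x}_k^{i_0}-\bar{x}^\ast_{i_0})/\|\bar{u}_k\|$, $y=\bar{u}_k^{i_0}/\|\bar{u}_k\|$ satisfy $|y|\leq 1$, $xy\geq 1$) puts Proposition \ref{prop:1} in force and yields $|\alpha_k^{i_0}|\leq 1$ \emph{only on that index set}, which already contradicts unboundedness of that coordinate. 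The second assertion is then proved not via monotone convergence of $|\bar{x}_k^i-\bar{x}^\ast_i|$ but via the telescoping product $\bar{x}^i_{j_k}-\bar{x}^\ast_i=\prod_{0\leq j\leq j_k-1}\alpha_j^i\,(\bar{x}_0^i-\bar{x}^\ast_i)$ and convergence of the partial sums of $\log|\alpha_j^i|$, which uses Assumption \ref{ass:Si} but no uniform bound on $|\alpha_k^i|$. To repair your proposal you would either have to supply the missing isolation argument in a form that only ever asserts $|\alpha_k^i|\leq 1$ when coordinate $i$ is dominant (which is precisely the paper's contradiction scheme), or find a genuinely different route to boundedness; as written, the linchpin step is absent.
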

\begin{proof}
Suppose $\{ \bar{x}_k \}$ is unbounded.  For each $i = 1, \ldots, n$, and $N \geq 1$, let $I_N^i := \{ k \geq 0\ ; \ | \bar{x}^i_k | \geq N \}$.  Since we assume $\{ \bar{x}_k \}$ is unbounded, the set $U \subseteq \{ 1, \ldots, n \}$ defined to be such that for each $i \in U$, $\bigcap_{N=1}^{\infty} I_N^i \not= \emptyset$, is nonempty.

\vspace{5pt}

\noindent We observe that in order for (\ref{ineq:barA0prime}), (\ref{ineq:barB0prime}) to hold, there must exist $i_0 \in U$ and $N_0 \geq 1$ such that for all $k \in I^{i_0}_{N_0}$, we have
\begin{eqnarray*}
(\bar{x}_{k}^{i_0} - \bar{x}^\ast_{i_0})\bar{u}_{k}^{i_0} & \geq & \| \bar{u}_{k} \|^2, \\
- \left(1 + \frac{2\beta(\alpha_{k}^{i_0} -1)}{2 + \beta + \epsilon}\right) \bar{u}_{k}^{i_0}(\bar{x}_{k}^{i_0} - \bar{x}^\ast_{i_0}) & &   \\
- \left(\frac{(1+ \beta)(\alpha_{k}^{i_0} - 1)}{2 + \beta + \epsilon} \right)\left(1 + \frac{(1 + \beta)(\alpha_{k}^{i_0} - 1)}{2 + \beta + \epsilon}\right)(\bar{x}_{k}^{i_0} - \bar{x}^\ast_{i_0})^2 + \frac{1 - \beta}{1 + \beta} \| \bar{u}_{k} \|^2 & \geq & 0.
\end{eqnarray*}
From the above two inequalities, by Proposition \ref{prop:1}, where we let $x = (\bar{x}_{k}^{i_0} - \bar{x}^\ast_{i_0})/\| \bar{u}_{k} \|, y = \bar{u}_{k}^{i_0}/ \| \bar{u}_{k} \|, \alpha = \alpha_{k}^{i_0}$ in the proposition, we have $| \alpha_{k}^{i_0} | \leq 1$ for all $k \in I^{i_0}_{N_0}$.  We see that this implies that $\{ \bar{x}_k^{i_0} \}$ is bounded.  But this is a contradiction to $i_0 \in U$, that is, $\bigcap_{N=1}^{\infty} I_N^{i_0} \not= \emptyset$.  Hence, $\{ \bar{x}_k \}$ is bounded.  

\vspace{5pt}

\noindent We now show that for $i = 1, \ldots, n$, if $\{ \bar{x}_{j_k}^i \}$ is a convergent subsequence of $\{ \bar{x}_k^i \}$ with limit point $\bar{x}^{\ast\ast}_i$, which is not equal to $\bar{x}^\ast_i$, then $\lim_{k \rightarrow \infty} | \alpha_k^i | = 1$.  We have
\begin{eqnarray*}
\bar{x}^{i}_{j_k} - \bar{x}^\ast_{i} = \Pi_{0 \leq j \leq j_k - 1} {\alpha}^{i}_j (\bar{x}^{i}_{0} - \bar{x}^{\ast}_{i}).
\end{eqnarray*}
Hence,
\begin{eqnarray*}
\log \left| \frac{\bar{x}^{i}_{j_k} - \bar{x}^\ast_{i}}{\bar{x}^{i}_{0} - \bar{x}^\ast_{i}} \right| = \sum_{0 \leq j \leq j_k - 1} \log | {\alpha}^{i}_j |,
\end{eqnarray*}
{\textcolor{black}{which holds since $\alpha_j^i \not= \infty$, due to Assumption \ref{ass:Si}.}} Since $\{ \bar{x}^{i}_{j_k} \}$ converges to $\bar{x}^{\ast\ast}_i$, which is not equal to $\bar{x}^\ast_i$, this implies that $\left\{\sum_{0 \leq j \leq j_k - 1} \log | {\alpha}^{i}_j | \right\}$ converges, which further implies that $\lim_{k \rightarrow \infty} \log | {\alpha}^{i}_{k} | = 0$.  That is, $\lim_{k \rightarrow \infty} | {\alpha}^{i}_{k}| = 1$.
%
%
%
\end{proof}

\begin{lemma}\label{lem:technical2}
Given $\bar{u} = (\bar{u}_1, \ldots, \bar{u}_n)^T, \bar{x} = (\bar{x}_1, \ldots, \bar{x}_n)^T, \bar{\bar{x}} = ({\bar{\bar{x}}}_1, \ldots, {\bar{\bar{x}}}_n)^T \in \Re^n$ and $0 < \epsilon < \min \{\beta, 1/\beta \}$.  Suppose $\langle \bar{u}, \bar{\bar{x}} - \bar{x} \rangle  \geq  \| \bar{u} \|^2$ and 
\begin{eqnarray}
\langle \bar{\bar{x}} - \bar{x}, \bar{u} \rangle \leq \frac{1 - \beta}{1 + \beta} \| \bar{u} \|^2  + \sum_{i \in I} \left[ \left(\frac{4\beta}{2 + \beta + \epsilon} \right) \bar{u}_i(\bar{\bar{x}}_i  - \bar{x}_i) + \frac{2(1 + \beta)(\epsilon - \beta)}{(2 + \beta + \epsilon)^2}(\bar{\bar{x}}_i - \bar{x}_i)^2 \right],  \label{ineq:technical2}
\end{eqnarray}
where $I \subseteq \{1, \ldots, n\}$.  Then $\bar{u} = 0$, and $\bar{x}_i = \bar{\bar{x}}_i$ for $i \in I$.
\end{lemma}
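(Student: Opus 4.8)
The plan is to reduce Lemma~\ref{lem:technical2} to Proposition~\ref{prop:3} by aggregating the coordinates that lie inside the index set $I$ and those that lie outside it. Set $d_i := \bar{\bar{x}}_i - \bar{x}_i$ for $i = 1, \ldots, n$, and introduce the five scalars
\begin{eqnarray*}
P := \sum_{i \in I}\bar{u}_i^2,\quad Q := \sum_{i \in I}\bar{u}_i d_i,\quad R := \sum_{i \in I}d_i^2,\quad P' := \sum_{i \notin I}\bar{u}_i^2,\quad Q' := \sum_{i \notin I}\bar{u}_i d_i.
\end{eqnarray*}
Then $P, R, P' \geq 0$, and the Cauchy--Schwarz inequality applied to the coordinates in $I$ gives $Q^2 \leq PR$, i.e. $Q^2 - PR \leq 0$.

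Next I would rewrite the two hypotheses in terms of $P, Q, R, P', Q'$. The assumption $\langle \bar{u}, \bar{\bar{x}} - \bar{x}\rangle \geq \|\bar{u}\|^2$ reads $Q + Q' \geq P + P'$, that is $P - Q + P' - Q' \leq 0$. In \eqref{ineq:technical2} one has $\langle \bar{\bar{x}} - \bar{x}, \bar{u}\rangle = Q + Q'$, $\|\bar{u}\|^2 = P + P'$, $\sum_{i\in I}\bar{u}_i(\bar{\bar{x}}_i - \bar{x}_i) = Q$ and $\sum_{i\in I}(\bar{\bar{x}}_i - \bar{x}_i)^2 = R$; substituting these, moving all terms to one side, and using the identities $1 - \frac{4\beta}{2 + \beta + \epsilon} = -\frac{3\beta - \epsilon - 2}{2 + \beta + \epsilon}$ and $-\frac{2(1+\beta)(\epsilon - \beta)}{(2+\beta+\epsilon)^2} = \frac{2(1+\beta)(\beta - \epsilon)}{(2+\beta+\epsilon)^2}$ to collect the coefficient of $Q$, inequality \eqref{ineq:technical2} turns into
\begin{eqnarray*}
Q' - \frac{1-\beta}{1+\beta}P' - \frac{1 - \beta}{1 + \beta}P - \frac{3\beta - \epsilon - 2}{2 + \beta + \epsilon}Q + \frac{2(1 + \beta)(\beta - \epsilon)}{(2 + \beta + \epsilon)^2}R \leq 0.
\end{eqnarray*}

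Now the three inequalities satisfied by $(P, Q, R, P', Q')$, together with the sign restrictions $P, R, P' \in \Re_+$ and $Q, Q' \in \Re$, are precisely the hypotheses of Proposition~\ref{prop:3} with $(x, y, z, x_1, y_1) = (P, Q, R, P', Q')$. Hence Proposition~\ref{prop:3} forces $P = Q = R = P' = Q' = 0$. From $P = P' = 0$ we get $\bar{u}_i = 0$ for every $i$, i.e. $\bar{u} = 0$; from $R = 0$ we get $d_i = 0$, i.e. $\bar{\bar{x}}_i = \bar{x}_i$, for all $i \in I$. This is exactly the assertion (and note the argument uniformly covers the degenerate case $I = \emptyset$, where all $I$-sums vanish).

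The only genuine idea here is the aggregation, which converts a coordinate-wise situation into the five-variable system already solved in Proposition~\ref{prop:3}; the step I expect to be the most delicate, though still routine, is verifying that \eqref{ineq:technical2} collapses onto the first constraint of Proposition~\ref{prop:3} after the coefficient identities are applied, since one must keep careful track of which sums range over $I$ and which over its complement.
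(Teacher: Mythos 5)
Your proposal is correct and follows essentially the same route as the paper: both rearrange \eqref{ineq:technical2} and the hypothesis $\langle \bar{u}, \bar{\bar{x}} - \bar{x}\rangle \geq \|\bar{u}\|^2$ into the two linear constraints of Proposition \ref{prop:3} via the aggregated sums $\sum_{i \in I}\bar{u}_i^2$, $\sum_{i \in I}\bar{u}_i(\bar{\bar{x}}_i - \bar{x}_i)$, $\sum_{i \in I}(\bar{\bar{x}}_i - \bar{x}_i)^2$, $\sum_{i \notin I}\bar{u}_i^2$, $\sum_{i \notin I}\bar{u}_i(\bar{\bar{x}}_i - \bar{x}_i)$, with Cauchy--Schwarz supplying the constraint $y^2 - xz \leq 0$. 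The coefficient identities you check are exactly those implicit in the paper's derivation, so the two arguments coincide.
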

\begin{proof}
From (\ref{ineq:technical2}), we have
\begin{eqnarray}
0 & \leq & - \sum_{i \not\in I} \bar{u}_i (\bar{\bar{x}}_i - \bar{x}_i) + \frac{1 - \beta}{1 + \beta} \sum_{i \not\in I} \bar{u}_i^2 + \frac{1 - \beta}{1 + \beta} \sum_{i \in I} \bar{u}_i^2 + \frac{3\beta - \epsilon - 2}{2 + \beta + \epsilon} \sum_{i \in I} \bar{u}_i(\bar{\bar{x}}_i  - \bar{x}_i) \nonumber \\
& &  + \frac{2(1 + \beta)(\epsilon - \beta)}{(2 + \beta + \epsilon)^2}\sum_{i \in I}(\bar{\bar{x}}_i - \bar{x}_i)^2. \label{ineq:technical2prime}
\end{eqnarray}
From $\langle \bar{u}, \bar{\bar{x}} - \bar{x} \rangle \geq \| \bar{u} \|^2$, we have
\begin{eqnarray}\label{ineq:technical3}
0 \leq \sum_{i \in I} \bar{u}_i ( \bar{\bar{x}}_i - \bar{x}_i) - \sum_{i \in I} \bar{u}_i^2 + \sum_{i \not\in I} \bar{u}_i( \bar{\bar{x}}_i - \bar{x}_i) - \sum_{i \not\in I} \bar{u}_i^2.
\end{eqnarray}
The lemma follows from Proposition \ref{prop:3} by letting $x = \sum_{i \in I} \bar{u}_i^2, y = \sum_{i \in I} \bar{u}_i(\bar{\bar{x}}_i - \bar{x}_i), z = \sum_{i \in I}(\bar{\bar{x}}_i - \bar{x}_i)^2, x_1 = \sum_{i \not\in I} \bar{u}_i^2$ and $y_1 = \sum_{i \not\in I} \bar{u}_i(\bar{\bar{x}}_i - \bar{x}_i)$ in the proposition.  Note that the condition $y^2 - xz \leq 0$ in the proposition holds in this case by the Cauchy-Schwartz inequality.
\end{proof}

\vspace{10pt}

\noindent We have the following theorem:
\begin{theorem}\label{thm:maintheoremnD}
For $\theta = 2 + \beta + \epsilon$, where $0 < \epsilon < \min \{ \beta, 1/\beta \}$, $\beta > 0$, let $x^{\ast\ast}$ be an accumulation point of  $\{ x_k \}$, where the sequence $\{ x_k \}$ is generated by the relaxed PR splitting method (\ref{RelaxedPR}) for a given initial iterate $x_0 \in \Re^n$.  Then $J_A( x^{\ast\ast})$ solves the monotone inclusion problem (\ref{MIP}).
\end{theorem}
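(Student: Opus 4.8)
The plan is to work with the transformed sequence $\{\bar{x}_k\}$ and the quantities $\bar{u}_k = J_{\bar{A}_0}(\bar{x}_k)$, $\bar{v}_k$, $\bar{w}_k$ defined in Section \ref{sec:preliminaries}, and to show that if $\bar{x}^{\ast\ast}$ is the accumulation point of $\{\bar{x}_k\}$ corresponding to $x^{\ast\ast}$ (recall $\{x_k\}$ converges along a subsequence iff $\{\bar{x}_k\}$ does, with $\bar{x}^{\ast\ast} = x^{\ast\ast}/(1+\beta)$), then $J_{\bar{A}_0}(\bar{x}^{\ast\ast}) = 0$; equivalently $\bar{u}_{j_k} \to 0$ along the relevant subsequence, which forces $\bar{x}^{\ast\ast} \in \bar{A}_0(0)$ and, via $\bar{w}_{j_k} \to 0$ and $\bar{v}_{j_k}\to 0$, also $-\bar{x}^{\ast\ast}\in\bar{B}_0(0)$. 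Since $J_A(x^{\ast\ast}) = J_{\bar{A}_0}(\bar{x}^{\ast\ast})$ after untransforming, this yields that $J_A(x^{\ast\ast})$ solves (\ref{MIP}). By Lemma \ref{lem:alphakboundnD} the sequence $\{\bar{x}_k\}$ is bounded, so an accumulation point exists; pass to a subsequence $\{j_k\}$ along which $\bar{x}_{j_k} \to \bar{x}^{\ast\ast}$ and, refining further, along which each coordinate sequence $\{\alpha_{j_k}^i\}$ and $\{\bar{u}_{j_k}\}$ converges (the latter because $\bar{u}_{j_k} = J_{\bar{A}_0}(\bar{x}_{j_k})$ and the resolvent $J_{\bar{A}_0}$ is (firmly) nonexpansive, hence continuous).

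The key step is to produce, in the limit along $\{j_k\}$, a pair of inequalities to which Lemma \ref{lem:technical2} applies. Inequality (\ref{ineq:barA0prime}) gives $\langle \bar{x}_{j_k} - \bar{x}^\ast, \bar{u}_{j_k}\rangle \geq \|\bar{u}_{j_k}\|^2$ for every $k$, which passes to the limit to give $\langle \bar{x}^{\ast\ast} - \bar{x}^\ast, \bar{u}^{\ast\ast}\rangle \geq \|\bar{u}^{\ast\ast}\|^2$ with $\bar{u}^{\ast\ast} := \lim \bar{u}_{j_k}$. For the second inequality I would start from (\ref{ineq:barB0prime}), split the index set into $I := \{i : \bar{x}^{\ast\ast}_i \neq \bar{x}^\ast_i\}$ and its complement, and use Lemma \ref{lem:alphakboundnD}: for $i \in I$ we have $\alpha_{j_k}^i \to \pm 1$, so the coefficient $1 + \tfrac{2\beta(\alpha_{j_k}^i - 1)}{2+\beta+\epsilon}$ tends either to $1$ (when $\alpha^i \to 1$) or to $1 - \tfrac{4\beta}{2+\beta+\epsilon}$ (when $\alpha^i \to -1$), and the quadratic-in-$\alpha$ coefficient tends to $0$ or to $-\tfrac{2(1+\beta)(\beta-\epsilon)}{(2+\beta+\epsilon)^2}\cdot(\text{something})$; carefully tracking these limits (the $\alpha^i \to -1$ case being the one that generates exactly the extra terms $\tfrac{4\beta}{2+\beta+\epsilon}\bar{u}_i(\bar{\bar{x}}_i - \bar{x}_i) + \tfrac{2(1+\beta)(\epsilon-\beta)}{(2+\beta+\epsilon)^2}(\bar{\bar{x}}_i-\bar{x}_i)^2$ appearing in (\ref{ineq:technical2})) and using that for $i\notin I$ the term $(\bar{x}^i_{j_k}-\bar{x}^\ast_i)^2\to 0$ while $\bar{u}^i_{j_k}(\bar{x}^i_{j_k}-\bar{x}^\ast_i)\to 0$, the limit of (\ref{ineq:barB0prime}) should read precisely as (\ref{ineq:technical2}) with $\bar{\bar{x}} = \bar{x}^{\ast\ast}$, $\bar{x} = \bar{x}^\ast$, $\bar{u} = \bar{u}^{\ast\ast}$, and this $I$. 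Lemma \ref{lem:technical2} then gives $\bar{u}^{\ast\ast} = 0$.

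Once $\bar{u}^{\ast\ast} = 0$: from (\ref{inc:barA0prime}), $\bar{x}_{j_k} - \bar{u}_{j_k} \in \bar{A}_0(\bar{u}_{j_k})$, and taking limits using maximal monotonicity (hence closedness of the graph) of $\bar{A}_0$ gives $\bar{x}^{\ast\ast} \in \bar{A}_0(0)$, i.e. $J_{\bar{A}_0}(\bar{x}^{\ast\ast}) = 0$. For the $B$-side, note $\bar{v}_{j_k} = J_{\bar{B}_0}(\tfrac{2}{1+\beta}\bar{u}_{j_k} - \bar{x}_{j_k}) \to J_{\bar{B}_0}(-\bar{x}^{\ast\ast})$ by continuity of the resolvent, and $\bar{w}_{j_k} = \tfrac{2}{1+\beta}\bar{u}_{j_k} - \bar{x}_{j_k} - \bar{v}_{j_k} \to -\bar{x}^{\ast\ast} - J_{\bar{B}_0}(-\bar{x}^{\ast\ast})$; I would combine this with (\ref{ineq:technical2})-derived information, or more directly argue that $\bar{v}_{j_k} \to 0$. (Indeed, from (\ref{def:barvk}), $\bar{v}_{j_k} = \tfrac{1+\beta}{2+\beta+\epsilon}(\alpha_{j_k} - e)\dt(\bar{x}_{j_k} - \bar{x}^\ast) + \bar{u}_{j_k}$; for $i\in I$, $\alpha^i \to \pm 1$ and $\bar{x}^i_{j_k} - \bar{x}^\ast_i \to \bar{x}^{\ast\ast}_i - \bar{x}^\ast_i$, while for $i\notin I$ the difference $\to 0$ and $\alpha^i_{j_k}$ stays bounded away from $0,\infty$ — here I would need a short argument that in fact $\alpha^i \to 1$ forces the $i\in I$ contribution with coefficient $\alpha^i - 1 \to 0$, and the $\alpha^i \to -1$ contributions must vanish too because otherwise they contradict the already-established $\bar{u}^{\ast\ast} = 0$ together with (\ref{ineq:barB0prime})'s sign; this is where I expect the main technical care to be needed.) Granting $\bar{v}^{\ast\ast} := \lim\bar{v}_{j_k} = 0$, then $\bar{w}_{j_k} \to -\bar{x}^{\ast\ast}$ and (\ref{inc:barB0prime}) with graph-closedness of $\bar{B}_0$ gives $-\bar{x}^{\ast\ast} \in \bar{B}_0(0)$. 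Together with $\bar{x}^{\ast\ast}\in\bar{A}_0(0)$ this is exactly (\ref{inc:barA0barB0}) for $\bar{x}^{\ast\ast}$, so $J_{\bar{A}_0}(\bar{x}^{\ast\ast}) = 0$ solves the inclusion, and untransforming (using $J_A(x^{\ast\ast})$ corresponds to $J_{\bar{A}_0}(\bar{x}^{\ast\ast})$) shows $J_A(x^{\ast\ast})$ solves (\ref{MIP}).

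The main obstacle is the bookkeeping in the second paragraph: taking the limit of (\ref{ineq:barB0prime}) requires knowing the limiting behavior of $\alpha_{j_k}^i$ coordinatewise (only $|\alpha^i|\to 1$ is guaranteed, so one must handle the two sign-limits, and a priori different subsequences could do different things — this is handled by passing to a further subsequence on which each $\alpha^i_{j_k}$ converges to $+1$ or to $-1$), and matching the resulting coefficients exactly against the $I$-sum in Lemma \ref{lem:technical2}. The reduction to Lemma \ref{lem:technical2} with the Cauchy–Schwarz-verified constraint $y^2 \leq xz$ then closes the argument cleanly.
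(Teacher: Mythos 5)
Your overall route is the same as the paper's: boundedness of $\{\bar{x}_k\}$ and the coordinatewise dichotomy from Lemma \ref{lem:alphakboundnD}, a refined subsequence on which each $\alpha^i$ tends to $+1$ or $-1$, passage to the limit in (\ref{ineq:barA0prime}) and (\ref{ineq:barB0prime}) to land exactly in the hypotheses of Lemma \ref{lem:technical2}, and then graph-closedness to recover (\ref{inc:barA0barB0}). However, there is a genuine gap in your limit-taking, precisely at the coordinates you dismiss as harmless. For an index $i$ with $\bar{x}^i_{j_k}\to\bar{x}^\ast_i$ (your $i\notin I$), Lemma \ref{lem:alphakboundnD} gives no control on $\alpha^i_{j_k}$, and the terms of (\ref{ineq:barB0prime}) are not simply $\bar{u}^i_{j_k}(\bar{x}^i_{j_k}-\bar{x}^\ast_i)$ and $(\bar{x}^i_{j_k}-\bar{x}^\ast_i)^2$: they carry the factors $\alpha^i_{j_k}-1$ and $(\alpha^i_{j_k}-1)^2$, and
$(\alpha^i_{j_k}-1)(\bar{x}^i_{j_k}-\bar{x}^\ast_i)=(\bar{x}^i_{j_k+1}-\bar{x}^\ast_i)-(\bar{x}^i_{j_k}-\bar{x}^\ast_i)$
is only bounded, not vanishing, unless one also knows $\bar{x}^i_{j_k+1}\to\bar{x}^\ast_i$. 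Without that, the limit of (\ref{ineq:barB0prime}) acquires extra terms of indefinite sign and does not reduce to (\ref{ineq:technical2}), so Lemma \ref{lem:technical2} cannot be invoked as you state. The same unresolved issue reappears in your final step, where you admit that $\bar{v}_{j_k}\to 0$ needs ``a short argument'': via (\ref{def:barvk}) it is again exactly the quantity $(\alpha^i_{j_k}-1)(\bar{x}^i_{j_k}-\bar{x}^\ast_i)$ on these coordinates that must be shown to vanish.

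The paper closes this hole by shifting the index: it assumes (WLOG, by refining) that $\{\bar{x}_{j_k-1}\}$ converges to some $\bar{x}^{\ast\ast\ast}$, applies the limit argument and Lemma \ref{lem:technical2} at index $j_k-1$, and for coordinates outside $I\cup I_1$ argues (by contradiction) that $\bar{x}^i_{j_k}\to\bar{x}^\ast_i$ as well, which is what kills the problematic products. Having shown $\bar{u}^{\ast\ast\ast}=0$ and $\bar{x}^{\ast\ast\ast}=\bar{x}^{\ast\ast}$ coordinatewise, it then obtains the $B$-side not by a separate $\bar{v}\to 0$ argument but by letting $k\to\infty$ directly in the recursion (\ref{method:modifiedRelaxedPR2}) (equivalently Proposition \ref{prop:simpleobservation}): since $\bar{x}_{j_k}-\bar{x}_{j_k-1}\to 0$ and $\bar{u}_{j_k-1}\to 0$, it follows that $J_{\bar{B}_0}(-\bar{x}^{\ast\ast\ast})=0$. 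If you adopt this one-step index shift (or, equivalently, refine so that $\bar{x}_{j_k+1}$ converges and prove its limit agrees with $\bar{x}^\ast$ on those coordinates), your argument goes through and coincides with the paper's.
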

\begin{proof}
We only need to prove that for $\theta = 2 + \beta + \epsilon$, where $0 < \epsilon < \min \{ \beta, 1/\beta \}$, $\beta > 0$, any convergent subsequence $\{ \bar{x}_{j_k} \}$ of $\{ \bar{x}_k \}$, the latter generated using (\ref{method:modifiedRelaxedPR2}) when $\bar{x}_0 = \frac{1}{1 + \beta} x_0$, has its limit point $\bar{x}^{\ast\ast}$ that satisfies (\ref{inc:barA0barB0}). 

%
%
%
%

%
%

\vspace{5pt}

%
%
\noindent WLOG, assume that $\{ \bar{x}_{j_k - 1} \}$ converges as well, and let the sequence converges to $\bar{x}^{\ast \ast \ast}$, for some $\bar{x}^{\ast \ast \ast} \in \Re^n$.  We have $\{ \bar{u}_{j_k - 1} \}$, where $\bar{u}_{j_k - 1} = J_{\bar{A}_0}(\bar{x}_{j_k - 1})$, also converges, and let $\bar{u}^{\ast \ast \ast}$ be its limit point.   By Lemma \ref{lem:alphakboundnD}, for $i = 1, \ldots, n$, we  either have $\lim_{k \rightarrow \infty} \bar{x}_{j_k-1}^i = \bar{x}^\ast_i$ or $\lim_{k \rightarrow \infty} | \alpha_k^i | = 1$.  Let $I := \{ i \ ; \ 1 \leq i \leq n, \lim_{k \rightarrow \infty} \alpha_{j_k-1}^i = -1\}$ and $I_1 := \{ i \ ; \ 1 \leq i \leq n, \lim_{k \rightarrow \infty} \alpha_{j_k-1}^i = 1\}$.  Observe that for $i \in I_1$, $\bar{x}^{\ast\ast}_i = \bar{x}^{\ast \ast \ast}_i$, and for $i \not\in I \cup I_1$, we have $\lim_{k \rightarrow \infty} \bar{x}_{j_k-1}^i = \bar{x}^\ast_i$.  Since $i \not\in I \cup I_1$, we must also have $\lim_{k \rightarrow \infty} \bar{x}^i_{j_k} = \bar{x}^\ast_i$, which we can show by contradiction.   Hence, for $i \not\in I \cup I_1$, $\bar{x}^{\ast \ast}_i = \bar{x}^{\ast \ast \ast}_i = \bar{x}^\ast_i$.

\vspace{5pt}

\noindent Consider the following expression, which has a similar expression in (\ref{ineq:barB0prime}):
\begin{eqnarray}
& & -\left(1 + \frac{2\beta ( \alpha_{j_k-1}^i - 1)}{2 + \beta + \epsilon} \right) \bar{u}_{j_k-1}^i (\bar{x}_{j_k-1}^i - \bar{x}^\ast_i) \nonumber \\
& & - \left( \frac{(1+ \beta)(\alpha_{j_k-1}^i - 1)}{2 + \beta + \epsilon} \right)\left( 1 + \frac{(1 + \beta)(\alpha_{j_k-1}^i - 1)}{2 + \beta + \epsilon}\right)(\bar{x}_{j_k-1}^i - \bar{x}^\ast_i)^2.  \label{exp:0}
\end{eqnarray}


\noindent For $i \in I$, as $k \rightarrow \infty$, the expression (\ref{exp:0}) converges to
\begin{eqnarray}\label{exp:1}
\left(-1 + \frac{4 \beta}{2 + \beta + \epsilon} \right)\bar{u}^{\ast \ast \ast}_i(\bar{x}^{\ast \ast \ast}_i - \bar{x}^{\ast}_i) - \frac{2(1 + \beta)(\beta - \epsilon)}{(2 + \beta + \epsilon)^2}(\bar{x}^{\ast \ast \ast}_i - \bar{x}^\ast_i)^2.
\end{eqnarray}
\noindent For $i \in I_1$, as $k \rightarrow \infty$, it converges to
\begin{eqnarray}\label{exp:2}
\bar{u}^{\ast \ast \ast}_i(\bar{x}^\ast_i - \bar{x}^{\ast \ast \ast}_i).
\end{eqnarray}
Finally, for $i \not\in I \cup I_1$, it converges to $0$.

\vspace{5pt}

\noindent Hence, (\ref{ineq:barB0prime}) leads to the following:
\begin{eqnarray*}
\sum_{i \in I} \left[\left( -1 + \frac{4 \beta}{2 + \beta + \epsilon}\right) \bar{u}^{\ast \ast \ast}_i(\bar{x}^{\ast \ast \ast}_i  - \bar{x}^\ast_i) - \frac{2(1 + \beta)(\beta - \epsilon)}{(2 + \beta + \epsilon)^2}(\bar{x}^{\ast \ast \ast}_i - \bar{x}^\ast_i)^2 \right]  & & \\ 
+ \sum_{i \in I_1} \bar{u}^{\ast\ast\ast}_i(\bar{x}^\ast - \bar{x}^{\ast\ast\ast}) + \frac{1 - \beta}{1 + \beta} \| \bar{u}^{\ast \ast \ast} \|^2 & \geq & 0. 
\end{eqnarray*}
That is,
\begin{eqnarray}
\langle \bar{u}^{\ast\ast\ast}, \bar{x}^{\ast\ast\ast} - \bar{x}^\ast \rangle & \leq & \frac{1 - \beta}{1 + \beta} \| \bar{u}^{\ast \ast \ast} \|^2 + \sum_{i \in I} \left[ \frac{4 \beta}{2 + \beta + \epsilon} \bar{u}^{\ast \ast \ast}_i(\bar{x}^{\ast \ast \ast}_i  - \bar{x}^\ast_i) \right. \nonumber \\ 
& & \left. + \frac{2(1 + \beta)(\epsilon - \beta)}{(2 + \beta + \epsilon)^2}(\bar{x}^{\ast \ast \ast}_i - \bar{x}^\ast_i)^2 \right]. \label{ineq:combined}
\end{eqnarray}
Furthermore, by (\ref{ineq:barA0prime}), the inequality $\| \bar{u}_{j_k-1} \|^2 \leq \langle \bar{x}_{j_k-1} - \bar{x}^\ast, \bar{u}_{j_k-1} \rangle$ holds, and it tends to 
\begin{eqnarray}\label{ineq:combined1}
\| \bar{u}^{\ast \ast\ast} \|^2 \leq \langle \bar{u}^{\ast \ast \ast},  \bar{x}^{\ast \ast \ast} - \bar{x}^\ast \rangle,
\end{eqnarray}
as $k \rightarrow \infty$.  With (\ref{ineq:combined}), (\ref{ineq:combined1}), using Lemma \ref{lem:technical2}, where we let $\bar{x} = \bar{x}^\ast, \bar{\bar{x}} = \bar{x}^{\ast \ast \ast}, \bar{u} = \bar{u}^{\ast \ast \ast}$ in the lemma, we have $\bar{u}^{\ast \ast \ast} = 0$ and for $i \in I$, $\bar{x}^{\ast \ast\ast}_i = \bar{x}^{\ast}_i$.  Hence, for $i \in I$, we have $\bar{x}^{\ast \ast \ast}_i = \bar{x}^{\ast \ast}_i$. 

\vspace{5pt}

\noindent In summary, for $i \in I$, $\bar{x}^{\ast\ast\ast}_i = \bar{x}^{\ast \ast}_i = \bar{x}^\ast_i$, for $i \in I_1$, $\bar{x}^{\ast \ast \ast}_i = \bar{x}^{\ast \ast}_i$, while for $i \not\in I \cup I_1$, $\bar{x}^{\ast\ast\ast}_i = \bar{x}^{\ast\ast}_i = \bar{x}^\ast_i$.  Furthermore, $J_{\bar{A}_0}(\bar{x}^{\ast \ast \ast}) = \bar{u}^{\ast\ast\ast} = 0$.  By taking limit in (\ref{method:modifiedRelaxedPR2}), where $k$ in each subscript in (\ref{method:modifiedRelaxedPR2}) is replaced by $j_k$, we hence have $J_{\bar{B}_0}(-\bar{x}^{\ast \ast \ast}) = 0$.  Since $J_{\bar{A}_0}(\bar{x}^{\ast \ast \ast}) = J_{\bar{B}_0}(-\bar{x}^{\ast \ast \ast}) = 0$, (\ref{inc:barA0barB0}) is satisfied by $\bar{x}^{\ast \ast \ast}$, and therefore satisfied by $\bar{x}^{\ast\ast}$.
%
\end{proof}

\vspace{10pt}

\noindent The following corollary follows immediately from the above theorem:
\begin{corollary}\label{cor:convergence}
Suppose $A$ or $B$ is single-valued, then for $\theta = 2 + \beta + \epsilon$, where $0 < \epsilon < \min \{ \beta, 1/\beta \}$, $\beta > 0$, the sequence $\{ x_k \}$ generated by the relaxed PR splitting method (\ref{RelaxedPR}) for a given initial iterate $x_0 \in \Re^n$ is a convergent sequence with limit point $x^\ast$, and $J_A(x^\ast)$ solves the monotone inclusion problem (\ref{MIP}).
\end{corollary}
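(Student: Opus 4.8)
The plan is to combine Theorem \ref{thm:maintheoremnD} with the boundedness established in Lemma \ref{lem:alphakboundnD}, exploiting that the relevant fixed point is unique when $A$ or $B$ is single-valued.

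First I would note that, by Lemma \ref{lem:alphakboundnD}, the sequence $\{\bar{x}_k\}$ produced by (\ref{method:modifiedRelaxedPR2}) with $\bar{x}_0 = \frac{1}{1+\beta}x_0$ is bounded, hence so is $\{x_k\} = \{(1+\beta)\bar{x}_k\}$; in particular $\{x_k\}$ has at least one accumulation point. Let $x^{\ast\ast}$ be any such accumulation point and $\bar{x}^{\ast\ast} = \frac{1}{1+\beta}x^{\ast\ast}$ the corresponding accumulation point of $\{\bar{x}_k\}$. The proof of Theorem \ref{thm:maintheoremnD} shows that $\bar{x}^{\ast\ast}$ satisfies (\ref{inc:barA0barB0}), i.e. $\bar{x}^{\ast\ast}\in\bar{A}_0(0)$ and $-\bar{x}^{\ast\ast}\in\bar{B}_0(0)$.

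Next I would use that, since $A$ or $B$ is single-valued, so is $A_0 = A - \beta I$ or $B_0 = B - \beta I$, and hence $\bar{A}_0$ or $\bar{B}_0$; by the observation recorded just after (\ref{inc:barA0barB0}) in Section \ref{sec:preliminaries}, the only vector satisfying (\ref{inc:barA0barB0}) is then $\bar{x}^\ast$. Consequently $\bar{x}^{\ast\ast} = \bar{x}^\ast$ for every accumulation point, so the bounded sequence $\{\bar{x}_k\}$ has a unique accumulation point and therefore converges to $\bar{x}^\ast$; equivalently $\{x_k\}$ converges to $x^\ast := (1+\beta)\bar{x}^\ast$. Since $x^\ast$ is itself an accumulation point of $\{x_k\}$, Theorem \ref{thm:maintheoremnD} then yields that $J_A(x^\ast)$ solves (\ref{MIP}).

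I do not expect a genuine obstacle: the only points needing care are the elementary fact that a bounded sequence with a single accumulation point converges, and the reason single-valuedness of $A$ or $B$ forces uniqueness of the vector satisfying (\ref{inc:barA0barB0}), which is exactly the remark following (\ref{inc:barA0barB0}) in Section \ref{sec:preliminaries}. For $n=1$ the conclusion holds even without Assumption \ref{ass:Si} by Remark \ref{n=1}; for $n\ge 2$ the corollary is understood under Assumption \ref{ass:Si}, which is in force throughout Section \ref{sec:Convergence}.
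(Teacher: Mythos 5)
Your proposal is correct and follows essentially the same route as the paper's own proof: boundedness of $\{\bar{x}_k\}$ from Lemma \ref{lem:alphakboundnD}, identification of every subsequential limit as a solution of (\ref{inc:barA0barB0}) via Theorem \ref{thm:maintheoremnD}, and uniqueness of that solution (namely $\bar{x}^\ast$) from single-valuedness of $A$ or $B$, whence the bounded sequence with a unique accumulation point converges and $J_A(x^\ast)$ solves (\ref{MIP}).
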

\begin{proof}
We only need to prove that for $\theta = 2 + \beta + \epsilon$, where $0 < \epsilon < \min \{ \beta, 1/\beta \}$, $\beta > 0$, $\{ \bar{x}_k \}$ converges to $\bar{x}^\ast$.  Since, by Lemma \ref{lem:alphakboundnD}, we know that $\{ \bar{x}_k \}$ is bounded, this is achieved by showing that any convergent subsequence $\{\bar{x}_{j_k} \}$ of $\{ \bar{x}_k \}$ converges to $\bar{x}^\ast$.  By Theorem \ref{thm:maintheoremnD}, we see that the limit point of $\{ \bar{x}_{j_k}\}$ satisfies (\ref{inc:barA0barB0}).  Since $A$ or $B$ is single-valued, which implies that either $\bar{A}_0$ or $\bar{B}_0$ is single-valued, therefore, $\bar{x}^\ast$ is the only solution to (\ref{inc:barA0barB0}).  Hence, the limit point of $\{ \bar{x}_{j_k}\}$ is $\bar{x}^\ast$.
\end{proof}
{\textcolor{black}{\begin{remark}\label{convergencein1D}
We remark that when $n = 1$, for convergence of $\{ x_k \}$ given any initial iterate $x_0 \in \Re$, we do not require Assumption \ref{ass:Si} and only need $A, B$ to be maximal $\beta$-strongly monotone, when $\theta = 2 + \beta + \epsilon$, where $0 < \epsilon < \min \{\beta, 1/\beta\}, \beta > 0$ \cite{Sim}.  Hence, Assumption \ref{ass:Si} is only required when $n$ is greater than or equal to $2$ to show convergence of $\{ x_k \}$.  We believe that this assumption is always satisfied\footnote{\textcolor{black}{Preliminary numerical evidence  that assumption always holds is provided in Section \ref{sec:Numerical}.}}, although it is an open problem to show this for all practical problems.  Our approach to show convergence arises from the idea to solve a higher dimensional problem by ``reducing" it to a smaller dimensional problem ($n = 1$), and then applying our solution method when $n = 1$ \cite{Sim}, where we have convergence.  In the process of doing this, we realize that we need Assumption \ref{ass:Si} in order to show convergence in higher dimensions. In fact, there may exist other approaches that do not require this assumption to show convergence, and that convergence of iterates occurs over the considered range of $\theta$ for $A$ and $B$ maximal $\beta$-strongly monotone, and either of them single-valued. 
\end{remark}
}}

\section{Results on Convergence Rates}\label{sec:convergencerate}


\noindent {\textcolor{black}{We start the section by relating $\bar{u}_k$ and $\bar{v}_k$ in the following way:
\begin{eqnarray}\label{rel:barukbarvk}
\bar{v}_{k} = - \gamma_{k} \bar{u}_{k} + \tau_{k} \bar{z}_{k},\ {\rm{where}}\ \| \bar{z}_{k} \| = 1 \ {\rm{and}}\ \langle \bar{u}_{k}, \bar{z}_{k} \rangle = 0.
\end{eqnarray}
Recall that $\bar{u}_k = J_{\bar{A}_0}(\bar{x}_k)$ and $\bar{v}_k = J_{\bar{B}_0}\left( \frac{2}{1+ \beta} \bar{u}_k - \bar{x}_k \right)$.  Note that there always exist $\gamma_{k}, \tau_{k}$ and $\bar{z}_{k}$ such that $\bar{v}_{k} = - \gamma_{k} \bar{u}_{k} + \tau_{k} \bar{z}_{k}$, $\| \bar{z}_{k} \| = 1$ and $\langle \bar{u}_{k}, \bar{z}_{k} \rangle = 0$ in (\ref{rel:barukbarvk}).  Also, $\gamma_k$ can be computed from $\bar{u}_k, \bar{v}_k$, and is equal to $- \langle \bar{u}_k, \bar{v}_k \rangle/\| \bar{u}_k \|^2$.  {\textcolor{black}{We have the following definition:
\begin{eqnarray}\label{def:bargammaunderlinegamma}
\bar{\gamma}:= \limsup_{k \rightarrow \infty} \gamma_k.
\end{eqnarray}
Furthermore, let $\bar{\gamma}_{\max}$ be the maximum of $\bar{\gamma}$ over all $\bar{x}_0 \in \Re^n$.  Note that $\bar{\gamma}_{\max}$ is only dependent on problem data and parameters.  It does not depend on the initial iterate and subsequent iterates generated.}}
}}




\noindent To arrive at our convergence rates results, we observe from (\ref{ineq:barA0prime}), (\ref{ineq:barB0alternative}) that
\begin{eqnarray}\label{ineq:strongmonotonicity}
\langle \bar{u}_{k-1}, \bar{x}_{k-1} - \bar{u}_{k-1} - \bar{x}^\ast \rangle + \left\langle \bar{v}_{k-1}, \frac{2}{1+ \beta} \bar{u}_{k-1} - \bar{x}_{k-1} - \bar{v}_{k-1} + \bar{x}^\ast \right\rangle \geq 0.
\end{eqnarray} 

\noindent The following lemma enables us to arrive at Lemma \ref{lem:consequence}, which is key to proving our convergence rates results in Theorem \ref{thm:bargammaast}.
\begin{lemma}\label{lem:differenceukvk}
For $k \geq 1$, we have 
\begin{eqnarray}
(1 - \theta) \|\bar{u}_{k-1} - \bar{v}_{k-1} \|^2 & \leq & \frac{(1+\beta)^2}{\theta}( \| \bar{x}_{k-1} - \bar{x}^\ast \|^2 - \| \bar{x}_{k} - \bar{x}^\ast \|^2) \nonumber \\
& & + (1+\beta) \langle \bar{x}_{k-1} - \bar{x}^\ast, \bar{v}_{k-1} - \bar{u}_{k-1} \rangle - \beta( \| \bar{u}_{k-1} \|^2 + \| \bar{v}_{k-1} \|^2). \label{ineq:differenceukvk}
\end{eqnarray}
\end{lemma}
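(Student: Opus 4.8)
The plan is to start from the defining recursion \eqref{method:modifiedRelaxedPR2} together with Proposition \ref{prop:simpleobservation}, which already tells us that $x_k - x_{k-1} = (1+\beta)(\bar x_k - \bar x_{k-1}) = \theta(\bar v_{k-1} - \bar u_{k-1})$. Squaring $\bar x_k - \bar x^\ast = (\bar x_{k-1} - \bar x^\ast) + \frac{\theta}{1+\beta}(\bar v_{k-1} - \bar u_{k-1})$ gives a three-term identity
\[
\|\bar x_k - \bar x^\ast\|^2 = \|\bar x_{k-1} - \bar x^\ast\|^2 + \frac{2\theta}{1+\beta}\langle \bar x_{k-1} - \bar x^\ast, \bar v_{k-1} - \bar u_{k-1}\rangle + \frac{\theta^2}{(1+\beta)^2}\|\bar v_{k-1} - \bar u_{k-1}\|^2 .
\]
First I would rearrange this to isolate $\|\bar x_{k-1}-\bar x^\ast\|^2 - \|\bar x_k - \bar x^\ast\|^2$, multiply through by $\frac{(1+\beta)^2}{\theta}$, and so obtain an exact expression for $\frac{(1+\beta)^2}{\theta}(\|\bar x_{k-1}-\bar x^\ast\|^2-\|\bar x_k-\bar x^\ast\|^2)$ in terms of $-2(1+\beta)\langle \bar x_{k-1}-\bar x^\ast,\bar v_{k-1}-\bar u_{k-1}\rangle$ and $-\theta\|\bar v_{k-1}-\bar u_{k-1}\|^2$. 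That immediately reduces the target inequality \eqref{ineq:differenceukvk} to something of the form
\[
(1-\theta)\|\bar u_{k-1}-\bar v_{k-1}\|^2 \;\le\; -\theta\|\bar u_{k-1}-\bar v_{k-1}\|^2 - (1+\beta)\langle \bar x_{k-1}-\bar x^\ast,\bar v_{k-1}-\bar u_{k-1}\rangle - \beta(\|\bar u_{k-1}\|^2+\|\bar v_{k-1}\|^2),
\]
i.e.\ after cancelling $-\theta\|\bar u_{k-1}-\bar v_{k-1}\|^2$ from both sides, to
\[
\|\bar u_{k-1}-\bar v_{k-1}\|^2 + (1+\beta)\langle \bar x_{k-1}-\bar x^\ast,\bar v_{k-1}-\bar u_{k-1}\rangle + \beta(\|\bar u_{k-1}\|^2+\|\bar v_{k-1}\|^2) \;\le\; 0.
\]

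The second step is to feed in the monotonicity inequality \eqref{ineq:strongmonotonicity}, which, recalling $\bar w_{k-1} = \frac{2}{1+\beta}\bar u_{k-1} - \bar x_{k-1} - \bar v_{k-1}$, reads $\langle \bar u_{k-1}, \bar x_{k-1}-\bar u_{k-1}-\bar x^\ast\rangle + \langle \bar v_{k-1}, \bar w_{k-1}+\bar x^\ast\rangle \ge 0$. I would expand $\langle \bar v_{k-1},\bar w_{k-1}+\bar x^\ast\rangle$ using the definition of $\bar w_{k-1}$ and collect terms; this produces exactly a combination of $\langle \bar x_{k-1}-\bar x^\ast, \bar u_{k-1}-\bar v_{k-1}\rangle$, $\|\bar u_{k-1}\|^2$, $\|\bar v_{k-1}\|^2$ and $\langle \bar u_{k-1},\bar v_{k-1}\rangle$. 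Multiplying \eqref{ineq:strongmonotonicity} by $(1+\beta)$ and substituting should turn the reduced target into an inequality that, after using $\|\bar u_{k-1}-\bar v_{k-1}\|^2 = \|\bar u_{k-1}\|^2 + \|\bar v_{k-1}\|^2 - 2\langle \bar u_{k-1},\bar v_{k-1}\rangle$, becomes a pure quadratic-form statement in the three scalars $\|\bar u_{k-1}\|^2$, $\|\bar v_{k-1}\|^2$, $\langle \bar u_{k-1},\bar v_{k-1}\rangle$ that holds identically (or holds because the $\bar x_{k-1}-\bar x^\ast$ cross-term cancels against the corresponding term coming from the three-term identity). In other words, once the $\bar x_{k-1}-\bar x^\ast$ terms match up, what remains is a fixed inequality among inner products of $\bar u_{k-1}$ and $\bar v_{k-1}$, which one checks by the Cauchy--Schwarz/completion-of-squares type argument (the coefficient of $\|\bar u_{k-1}\|^2+\|\bar v_{k-1}\|^2$ being $\beta>0$ with the right cross term).

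The main obstacle I anticipate is purely bookkeeping: making the $\langle \bar x_{k-1}-\bar x^\ast,\;\cdot\;\rangle$ cross-terms cancel cleanly between the squared-recursion identity and the $(1+\beta)\times$\eqref{ineq:strongmonotonicity} substitution, because the coefficient $\frac{2\theta}{1+\beta}$ from the recursion and the coefficient arising from $\frac{2}{1+\beta}$ inside $\bar w_{k-1}$ have to be reconciled, and a sign error there collapses the whole estimate. I would therefore carry out the algebra by first writing everything in terms of the three scalars $a=\|\bar u_{k-1}\|^2$, $b=\|\bar v_{k-1}\|^2$, $c=\langle \bar u_{k-1},\bar v_{k-1}\rangle$, and the single scalar $t = \langle \bar x_{k-1}-\bar x^\ast, \bar v_{k-1}-\bar u_{k-1}\rangle$, verify that the coefficient of $t$ vanishes, and then confirm the surviving inequality $\beta a + \beta b - 2\beta c \le$ (whatever is left) holds; since $a+b-2c = \|\bar u_{k-1}-\bar v_{k-1}\|^2\ge 0$ this last step should be immediate. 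The role of the parameter constraint $\theta = 2+\beta+\epsilon$ with $0\le\epsilon<\min\{\beta,1/\beta\}$ may or may not be needed here — I expect this particular lemma to be an identity-plus-monotonicity estimate valid for all $\theta>0$, with the parameter range only entering the later lemmas — but if a strictness or sign condition on $\theta$ surfaces, it will be exactly at the point of cancelling $-\theta\|\bar u_{k-1}-\bar v_{k-1}\|^2$ against $(1-\theta)\|\bar u_{k-1}-\bar v_{k-1}\|^2$, leaving $-\|\bar u_{k-1}-\bar v_{k-1}\|^2$ on the right, which is consistent with the reduction above.
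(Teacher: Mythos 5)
Your proposal is correct and follows essentially the same route as the paper: expand $\bar{x}_k-\bar{x}^\ast=(\bar{x}_{k-1}-\bar{x}^\ast)+\frac{\theta}{1+\beta}(\bar{v}_{k-1}-\bar{u}_{k-1})$ (Proposition \ref{prop:simpleobservation}), reduce the claim to $\|\bar{u}_{k-1}-\bar{v}_{k-1}\|^2+(1+\beta)\langle \bar{x}_{k-1}-\bar{x}^\ast,\bar{v}_{k-1}-\bar{u}_{k-1}\rangle+\beta(\|\bar{u}_{k-1}\|^2+\|\bar{v}_{k-1}\|^2)\le 0$, and obtain this from $(1+\beta)$ times the monotonicity inequality (\ref{ineq:strongmonotonicity}), with everything cancelling exactly. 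Your side remark is also right that no restriction on $\theta$ beyond $\theta>0$ is used in this lemma.
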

\begin{proof}
We have
\begin{eqnarray*}
& & \frac{(1+\beta)^2}{\theta}( \| \bar{x}_{k-1} - \bar{x}^\ast \|^2 - \| \bar{x}_{k} - \bar{x}^\ast \|^2) + (1+\beta) \langle \bar{x}_{k-1} - \bar{x}^\ast, \bar{v}_{k-1} - \bar{u}_{k-1} \rangle  \\
& & - \beta( \| \bar{u}_{k-1} \|^2 + \| \bar{v}_{k-1} \|^2) - (1 - \theta)  \|\bar{u}_{k-1} - \bar{v}_{k-1} \|^2 \\
& = & \frac{(1+\beta)^2}{\theta}\left( \| \bar{x}_{k-1} - \bar{x}^\ast \|^2 - \left\| (\bar{x}_{k-1} - \bar{x}^\ast) + \frac{\theta}{1 + \beta}(\bar{v}_{k-1} - \bar{u}_{k-1}) \right\|^2 \right)   \\
& & + (1+\beta) \langle \bar{x}_{k-1} - \bar{x}^\ast, \bar{v}_{k-1} - \bar{u}_{k-1} \rangle - \beta( \| \bar{u}_{k-1} \|^2 + \| \bar{v}_{k-1} \|^2) - (1 - \theta)  \|\bar{u}_{k-1} - \bar{v}_{k-1} \|^2 \\
& = & \frac{(1+\beta)^2}{\theta}\left( -\frac{2 \theta}{1 + \beta} \langle \bar{x}_{k-1} - \bar{x}^\ast, \bar{v}_{k-1} - \bar{u}_{k-1}) - \frac{\theta^2}{(1+ \beta)^2} \| \bar{u}_{k-1} - \bar{v}_{k-1} \|^2 \right)   \\
& & + (1+\beta) \langle \bar{x}_{k-1} - \bar{x}^\ast, \bar{v}_{k-1} - \bar{u}_{k-1} \rangle - \beta( \| \bar{u}_{k-1} \|^2 + \| \bar{v}_{k-1} \|^2) - (1 - \theta)  \|\bar{u}_{k-1} - \bar{v}_{k-1} \|^2 \\
& = & -(1+ \beta) \langle \bar{x}_{k-1} - \bar{x}^\ast, \bar{v}_{k-1} - \bar{u}_{k-1} \rangle - \beta( \| \bar{u}_{k-1} \|^2 + \| \bar{v}_{k-1} \|^2) - \| \bar{u}_{k-1} - \bar{v}_{k-1} \|^2 \\
& = & (1 + \beta)\langle \bar{x}_{k-1} - \bar{x}^\ast, \bar{u}_{k-1} \rangle - (1+ \beta) \langle \bar{x}_{k-1} - \bar{x}^\ast, \bar{v}_{k-1} \rangle - (1 + \beta) (\| \bar{u}_{k-1} {\textcolor{black}{\|^2}} + \| \bar{v}_{k-1} \|^2) \\
& &  + 2 \langle \bar{u}_{k-1}, \bar{v}_{k-1} \rangle \\
& \geq & 0,
\end{eqnarray*}
where the first equality follows from Proposition \ref{prop:simpleobservation}, and the inequality follows from (\ref{ineq:strongmonotonicity}).
\end{proof}

\vspace{10pt}

\noindent The following lemma follows from the above lemma.
\begin{lemma}\label{lem:consequence}
For $k \geq 1$, we have
\begin{eqnarray} \label{ineq:consequence2}
4 \beta \langle \bar{u}_{k-1}, \bar{v}_{k-1} \rangle + (\beta - \epsilon)\| \bar{u}_{k-1} - \bar{v}_{k-1} \|^2 \leq  \frac{(1+\beta)^2}{\theta}( \| \bar{x}_{k-1} - \bar{x}^\ast \|^2 - \| \bar{x}_k - \bar{x}^\ast \|^2) ,
\end{eqnarray}
where $\theta = 2 + \beta + \epsilon$, $0 \leq \epsilon < \min \{\beta, 1/\beta \}$, $\beta > 0$.
\end{lemma}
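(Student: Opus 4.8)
The plan is to derive (\ref{ineq:consequence2}) as a purely algebraic consequence of Lemma~\ref{lem:differenceukvk} together with the strong‑monotonicity estimates (\ref{ineq:barA0prime}) and (\ref{ineq:barB0alternative}) (equivalently (\ref{ineq:strongmonotonicity})), after substituting $\theta = 2+\beta+\epsilon$. No new inequality is needed; everything reduces to matching the coefficients of $\|\bar u_{k-1}\|^2$, $\|\bar v_{k-1}\|^2$, $\langle \bar u_{k-1},\bar v_{k-1}\rangle$ and $\langle \bar x_{k-1}-\bar x^\ast,\bar v_{k-1}-\bar u_{k-1}\rangle$.

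First I would rewrite the conclusion of Lemma~\ref{lem:differenceukvk} as a lower bound on the quantity $\frac{(1+\beta)^2}{\theta}\big(\|\bar x_{k-1}-\bar x^\ast\|^2-\|\bar x_k-\bar x^\ast\|^2\big)$:
\[
\frac{(1+\beta)^2}{\theta}\big(\|\bar x_{k-1}-\bar x^\ast\|^2-\|\bar x_k-\bar x^\ast\|^2\big)\ \geq\ (1-\theta)\|\bar u_{k-1}-\bar v_{k-1}\|^2-(1+\beta)\langle\bar x_{k-1}-\bar x^\ast,\bar v_{k-1}-\bar u_{k-1}\rangle+\beta(\|\bar u_{k-1}\|^2+\|\bar v_{k-1}\|^2).
\]
So it will suffice to show that the right‑hand side is at least $4\beta\langle\bar u_{k-1},\bar v_{k-1}\rangle+(\beta-\epsilon)\|\bar u_{k-1}-\bar v_{k-1}\|^2$. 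Collecting the $\|\bar u_{k-1}-\bar v_{k-1}\|^2$ terms and using $\theta=2+\beta+\epsilon$, so that $(1-\theta)-(\beta-\epsilon)=-(1+2\beta)$, this is equivalent to
\[
-(1+2\beta)\|\bar u_{k-1}-\bar v_{k-1}\|^2+\beta(\|\bar u_{k-1}\|^2+\|\bar v_{k-1}\|^2)-4\beta\langle\bar u_{k-1},\bar v_{k-1}\rangle\ \geq\ (1+\beta)\langle\bar x_{k-1}-\bar x^\ast,\bar v_{k-1}-\bar u_{k-1}\rangle.
\]
Next I would expand $\|\bar u_{k-1}-\bar v_{k-1}\|^2=\|\bar u_{k-1}\|^2+\|\bar v_{k-1}\|^2-2\langle\bar u_{k-1},\bar v_{k-1}\rangle$ on the left; the coefficients then collapse, since $-(1+2\beta)+\beta=-(1+\beta)$ and $2(1+2\beta)-4\beta=2$, to $-(1+\beta)(\|\bar u_{k-1}\|^2+\|\bar v_{k-1}\|^2)+2\langle\bar u_{k-1},\bar v_{k-1}\rangle$. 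Thus the claim reduces to
\[
(1+\beta)\langle\bar x_{k-1}-\bar x^\ast,\bar v_{k-1}-\bar u_{k-1}\rangle\ \leq\ 2\langle\bar u_{k-1},\bar v_{k-1}\rangle-(1+\beta)(\|\bar u_{k-1}\|^2+\|\bar v_{k-1}\|^2),
\]
which is exactly inequality (\ref{ineq:barB0alternative}) minus inequality (\ref{ineq:barA0prime}), multiplied through by $1+\beta>0$ (equivalently, (\ref{ineq:strongmonotonicity}) multiplied by $1+\beta$ and rearranged). This closes the argument.

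I expect no genuine analytic obstacle here: the only point requiring care is the bookkeeping of signs through the substitution $\theta=2+\beta+\epsilon$, and checking that the quadratic form in $\bar u_{k-1},\bar v_{k-1}$ really does collapse as claimed. It is also worth noting that the chain of deductions is valid for the whole stated range $0\le\epsilon<\min\{\beta,1/\beta\}$, including $\epsilon=0$, because Lemma~\ref{lem:differenceukvk}, (\ref{ineq:barA0prime}) and (\ref{ineq:barB0alternative}) hold without any lower restriction on $\epsilon$ beyond $\epsilon\ge0$, and positivity of $\beta$ (hence of $1+\beta$) is all that is used when multiplying through.
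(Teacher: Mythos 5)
Your proposal is correct and follows essentially the same route as the paper's proof: both combine Lemma \ref{lem:differenceukvk} with the monotonicity estimates (\ref{ineq:barA0prime}) and (\ref{ineq:barB0alternative}) (i.e.\ (\ref{ineq:strongmonotonicity})) after substituting $\theta=2+\beta+\epsilon$, differing only in the order of the algebraic bookkeeping --- you reduce the target inequality to the combined monotonicity inequality, while the paper expands $(1-\theta)\|\bar u_{k-1}-\bar v_{k-1}\|^2$ first and then inserts the same bound. The coefficient checks you perform ($-(1+2\beta)+\beta=-(1+\beta)$, $2(1+2\beta)-4\beta=2$) are accurate, so no gap remains.
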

\begin{proof}
By expanding the left-hand side of (\ref{ineq:differenceukvk}) in Lemma \ref{lem:differenceukvk}, using $\theta = 2 + \beta + \epsilon$, and upon algebraic manipulations, we obtain the following:
\begin{eqnarray}
-(1 + \epsilon)( \| \bar{u}_{k-1} \|^2 + \| \bar{v}_{k-1} \|^2) & \leq & \frac{(1 + \beta)^2}{\theta} ( \| \bar{x}_{k-1} - \bar{x}^\ast \|^2 - \| \bar{x}_k - \bar{x}^\ast \|^2) \nonumber \\
& & + (1 + \beta) \langle \bar{x}_{k-1} - \bar{x}^\ast, \bar{v}_{k-1} - \bar{u}_{k-1} \rangle - 2(1 + \beta + \epsilon) \langle \bar{u}_{k-1}, \bar{v}_{k-1} \rangle.  \nonumber \\
& & \label{ineq:consequence3}
\end{eqnarray}
Observe that
\begin{eqnarray*}
& & (1 + \beta) \langle \bar{x}_{k-1} - \bar{x}^\ast, \bar{v}_{k-1} - \bar{u}_{k-1} \rangle - 2(1 + \beta + \epsilon) \langle \bar{u}_{k-1}, \bar{v}_{k-1} \rangle \\
& \leq & - 2(\beta + \epsilon) \langle \bar{u}_{k-1}\, \bar{v}_{k-1} \rangle - (1+ \beta)(\| \bar{u}_{k-1} \|^2 + \| \bar{v}_{k-1} \|^2),
\end{eqnarray*}
where the inequality follows from (\ref{ineq:barA0prime}) and (\ref{ineq:barB0alternative}).  Substituting the above inequality into (\ref{ineq:consequence3}) and upon algebraic manipulations, we have
\begin{eqnarray*}
2(\beta + \epsilon) \langle \bar{u}_{k-1}\, \bar{v}_{k-1} \rangle + (\beta - \epsilon) (\| \bar{u}_{k-1} \|^2 + \| \bar{v}_{k-1} \|^2) \leq \frac{(1 + \beta)^2}{\theta} ( \| \bar{x}_{k-1} - \bar{x}^\ast \|^2 - \| \bar{x}_k  - \bar{x}^\ast \|^2). 
\end{eqnarray*}
The lemma then follows from the above by observing that $\| \bar{u}_{k-1} \|^2 + \| \bar{v}_{k-1} \|^2 = \| \bar{u}_{k-1} - \bar{v}_{k-1} \|^2 + 2 \langle \bar{u}_{k-1}, \bar{v}_{k-1} \rangle$.
\end{proof}

\vspace{10pt}



\noindent We need the following lemma to prove results on the {\textcolor{black}{$R$}}-linear convergence rate of $\| x_{k} - x^\ast \|$ in Theorem \ref{thm:bargammaast}:

\begin{lemma}\label{lem:Lipschitz}
Suppose  $A$ or $B$ is Lipschitz continuous with Lipschitz constant $L (\geq 0)$, then for all $k \geq 0$,
\begin{eqnarray}\label{ineq:Lipschitz}
\| \bar{x}_k - \bar{x}^\ast \| \leq \bar{L} (\| \bar{u}_k \| + \| \bar{v}_k \|),
\end{eqnarray}
where $\bar{L} := \max\left\{\frac{L}{1 + \beta} + 1, \frac{2}{1+\beta} \right\}$.
\end{lemma}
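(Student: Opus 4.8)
The plan is to write $\bar{x}_k - \bar{x}^\ast$ explicitly in terms of $\bar{u}_k$ and $\bar{v}_k$, treating separately the two cases ``$A$ Lipschitz'' and ``$B$ Lipschitz''. Throughout I would use that $\bar{A}_0 = \frac{1}{1+\beta}(A - \beta I)$ and $\bar{B}_0 = \frac{1}{1+\beta}(B - \beta I)$, the defining relations (\ref{inc:barA0prime}), (\ref{inc:barB0prime}), (\ref{def:barwk0}) for $\bar{u}_k$, $\bar{v}_k$, $\bar{w}_k$, and the identities $\bar{x}^\ast \in \bar{A}_0(0)$, $-\bar{x}^\ast \in \bar{B}_0(0)$ from (\ref{inc:barA0barB0}). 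Note that if $A$ (resp.\ $B$) is Lipschitz continuous then it is single-valued, hence so is $\bar{A}_0$ (resp.\ $\bar{B}_0$), so the corresponding inclusion above becomes an equality.

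Suppose first $A$ is Lipschitz with constant $L$. From $\bar{x}_k - \bar{u}_k \in \bar{A}_0(\bar{u}_k)$ one gets $\bar{x}_k - \bar{u}_k = \frac{1}{1+\beta}\big(A(\bar{u}_k) - \beta \bar{u}_k\big)$, while $\bar{x}^\ast = \bar{A}_0(0) = \frac{1}{1+\beta}A(0)$. Adding $\bar{u}_k$, subtracting $\bar{x}^\ast$, and collecting the coefficient of $\bar{u}_k$ yields $\bar{x}_k - \bar{x}^\ast = \frac{1}{1+\beta}\bar{u}_k + \frac{1}{1+\beta}\big(A(\bar{u}_k) - A(0)\big)$; taking norms and using $\|A(\bar{u}_k) - A(0)\| \le L\|\bar{u}_k\|$ gives $\|\bar{x}_k - \bar{x}^\ast\| \le \frac{1+L}{1+\beta}\|\bar{u}_k\|$. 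Since $\beta > 0$ we have $\frac{1+L}{1+\beta} \le \frac{L}{1+\beta} + 1 \le \bar{L}$, and bounding $\|\bar{u}_k\| \le \|\bar{u}_k\| + \|\bar{v}_k\|$ finishes this case.

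Now suppose $B$ is Lipschitz with constant $L$. Using $\bar{w}_k \in \bar{B}_0(\bar{v}_k)$ together with $\bar{w}_k = \frac{2}{1+\beta}\bar{u}_k - \bar{x}_k - \bar{v}_k$ gives $\frac{2}{1+\beta}\bar{u}_k - \bar{x}_k - \bar{v}_k = \frac{1}{1+\beta}\big(B(\bar{v}_k) - \beta \bar{v}_k\big)$; solving for $\bar{x}_k$, simplifying the coefficient of $\bar{v}_k$, and subtracting $\bar{x}^\ast = -\bar{B}_0(0)^{-} = -\frac{1}{1+\beta}B(0)$ (i.e.\ $-\bar{x}^\ast = \frac{1}{1+\beta}B(0)$) produces $\bar{x}_k - \bar{x}^\ast = \frac{2}{1+\beta}\bar{u}_k - \frac{1}{1+\beta}\bar{v}_k - \frac{1}{1+\beta}\big(B(\bar{v}_k) - B(0)\big)$. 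Taking norms and using Lipschitz continuity of $B$ gives $\|\bar{x}_k - \bar{x}^\ast\| \le \frac{2}{1+\beta}\|\bar{u}_k\| + \frac{1+L}{1+\beta}\|\bar{v}_k\| \le \bar{L}(\|\bar{u}_k\| + \|\bar{v}_k\|)$, since both coefficients are dominated by $\bar{L} = \max\{\frac{L}{1+\beta}+1, \frac{2}{1+\beta}\}$.

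There is no genuine obstacle here beyond careful bookkeeping; the two points needing attention are (i) correctly translating the resolvent relations into identities involving $A$ and $B$ through $\bar{A}_0,\bar{B}_0$ and invoking single-valuedness, and (ii) checking that the constant $\bar{L}$ is precisely what is needed to dominate $\frac{1+L}{1+\beta}$ and $\frac{2}{1+\beta}$ simultaneously. The asymmetry between the two cases — the factor $\frac{2}{1+\beta}$ appearing only when $B$ is the Lipschitz operator — is exactly what forces the $\max$ in the definition of $\bar{L}$.
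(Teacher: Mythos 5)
Your proposal is correct and follows essentially the same route as the paper: in both cases you use the inclusions $\bar{x}_k - \bar{u}_k \in \bar{A}_0(\bar{u}_k)$, $\bar{w}_k \in \bar{B}_0(\bar{v}_k)$ together with (\ref{inc:barA0barB0}), single-valuedness under the Lipschitz hypothesis, and the triangle inequality, exactly as the paper does via the bounds $\| \bar{x}_k - \bar{u}_k - \bar{x}^\ast \| \leq \frac{L}{1+\beta}\|\bar{u}_k\|$ and $\left\| \frac{2}{1+\beta}\bar{u}_k - \bar{v}_k - (\bar{x}_k - \bar{x}^\ast) \right\| \leq \frac{L}{1+\beta}\|\bar{v}_k\|$. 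Your explicit unpacking of $\bar{A}_0 = \frac{1}{1+\beta}(A-\beta I)$ even yields the marginally sharper coefficient $\frac{1+L}{1+\beta}$, which is then absorbed into the same constant $\bar{L}$.
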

\begin{proof}
\noindent Since $A$ or $B$ is Lipschitz continuous with Lipschitz continuous $L (\geq 0)$, it is clear that either $\bar{A}_0$ or $\bar{B}_0$ is also Lipschitz continuous with Lipschitz constant $\frac{L}{1 + \beta}$.

\vspace{5pt}

\noindent Suppose $\bar{A}_0$ is Lipschitz continuous.  We have from (\ref{inc:barA0barB0}) and (\ref{inc:barA0prime}) that
\begin{eqnarray*}
\| \bar{x}_k - \bar{u}_k - \bar{x}^\ast \| \leq \frac{L}{1 + \beta} \| \bar{u}_k \| .
\end{eqnarray*} 
It then follows that (\ref{ineq:Lipschitz}) holds for all $k \geq 0$ from the above by the triangle inequality.

\vspace{5pt}

\noindent Suppose $\bar{B}_0$ is Lipschitz continuous.  Then, from (\ref{inc:barA0barB0}) and (\ref{inc:barB0prime}) , using (\ref{def:barwk0}), the following holds:
\begin{eqnarray}\label{ineq:Lipschitz3}
\left\| \frac{2}{1+ \beta}\bar{u}_k - \bar{v}_k - (\bar{x}_k - \bar{x}^\ast) \right\| \leq \frac{L}{1 + \beta} \| \bar{v}_k \|. 
\end{eqnarray}
The result then follows by the triangle inequality.
\end{proof}

\begin{theorem}\label{thm:bargammaast}
Let $\{ x_k \}$ be generated by the relaxed PR splitting method (\ref{RelaxedPR}) given an initial iterate $x_0 \in \Re^n$, where $\theta = 2 + \beta + \epsilon$, $0 \leq \epsilon < \min \{\beta, 1/\beta \}$, $\beta > 0$.  {\textcolor{black}{For 
\begin{eqnarray}\label{ineq:gammabound1}
\bar{\gamma}_{\max} < \frac{\sqrt{\beta} - \sqrt{\epsilon}}{\sqrt{\beta} + \sqrt{\epsilon}}
\end{eqnarray}}}
there exists $i \leq k$ and {\textcolor{black}{$i \geq k_0$}}, where $k \geq {\textcolor{black}{2}}k_0$, and 
\begin{eqnarray}\label{def:M}
M := \min \left\{ M_1 (\beta - \epsilon), {\textcolor{black}{\frac{f(\bar{\gamma}_{\max})(1 - M_1)(\beta - \epsilon)}{8( \beta + \epsilon)\max\{\bar{\gamma},1\}}}} \right\} > 0,
\end{eqnarray}
{\textcolor{black}{where $f(\bar{\gamma}_{\max}) = - 2( \beta + \epsilon) \bar{\gamma}_{\max} + (\beta - \epsilon)( 1 + \bar{\gamma}_{\max}^2) > 0$ }}and $0 < M_1 < 1$, such that
\begin{eqnarray*}
\| x_i - x_{i-1} \| \leq \left(\frac{{\textcolor{black}{2}}\sqrt{\theta}\| x_{k_0-1} - x^\ast \|}{\sqrt{M}}\right) \frac{1}{\sqrt{k}}.
\end{eqnarray*}
Here, $k_0 \geq 1$ depends only on problem data and parameters.  Furthermore, if $A$ or $B$ is Lipschitz continuous with Lipschitz constant $L (\geq 0)$, then for all $k \geq k_0$,
\begin{eqnarray*}
\| x_k - x^\ast \| \leq \left[\sqrt{1 - \frac{M \theta}{2(1 + \beta)^2\bar{L}^2}}\right] \| x_{k-1} - x^\ast \|,
\end{eqnarray*}
where $\bar{L}$ is defined in Lemma \ref{lem:Lipschitz}.
\end{theorem}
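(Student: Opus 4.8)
The engine of both statements is Lemma~\ref{lem:consequence}. The plan is to sum (\ref{ineq:consequence2}) over $k=k_0,\dots,K$ and discard $\|\bar{x}_K-\bar{x}^\ast\|^2\ge0$, which telescopes to
\[
\sum_{k=k_0}^{K}\Big(4\beta\,\langle\bar{u}_{k-1},\bar{v}_{k-1}\rangle+(\beta-\epsilon)\,\|\bar{u}_{k-1}-\bar{v}_{k-1}\|^2\Big)\ \le\ \frac{(1+\beta)^2}{\theta}\,\|\bar{x}_{k_0-1}-\bar{x}^\ast\|^2.
\]
Both conclusions then reduce to bounding each summand from below: by $M\|\bar{u}_{k-1}-\bar{v}_{k-1}\|^2$ for the pointwise rate, and by $M(\|\bar{u}_{k-1}\|^2+\|\bar{v}_{k-1}\|^2)$ for the $R$-linear rate, for all $k$ past a data-controlled threshold $k_0$.

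\noindent To get these lower bounds I would insert the decomposition (\ref{rel:barukbarvk}). Writing $\bar{v}_{k-1}=-\gamma_{k-1}\bar{u}_{k-1}+\tau_{k-1}\bar{z}_{k-1}$ with $\bar{z}_{k-1}\perp\bar{u}_{k-1}$, $\|\bar{z}_{k-1}\|=1$, one has $\langle\bar{u}_{k-1},\bar{v}_{k-1}\rangle=-\gamma_{k-1}\|\bar{u}_{k-1}\|^2$ and $\|\bar{u}_{k-1}-\bar{v}_{k-1}\|^2=(1+\gamma_{k-1})^2\|\bar{u}_{k-1}\|^2+\tau_{k-1}^2$, so the summand equals $f(\gamma_{k-1})\|\bar{u}_{k-1}\|^2+(\beta-\epsilon)\tau_{k-1}^2$ with $f(\gamma)=-2(\beta+\epsilon)\gamma+(\beta-\epsilon)(1+\gamma^2)$, exactly the quadratic of the statement. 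The key fact is that the convex parabola $f$ has smaller root $\tfrac{\sqrt\beta-\sqrt\epsilon}{\sqrt\beta+\sqrt\epsilon}$ and vertex at $\tfrac{\beta+\epsilon}{\beta-\epsilon}\ge1$, hence $f$ is positive and decreasing on $\big(-\infty,\tfrac{\sqrt\beta-\sqrt\epsilon}{\sqrt\beta+\sqrt\epsilon}\big]$. Since $\bar{\gamma}=\limsup_k\gamma_k\le\bar{\gamma}_{\max}<\tfrac{\sqrt\beta-\sqrt\epsilon}{\sqrt\beta+\sqrt\epsilon}$ by (\ref{ineq:gammabound1}), there is $k_0$ with $\gamma_{k-1}\le\bar{\gamma}_{\max}$ for all $k\ge k_0$, whence $f(\gamma_{k-1})\ge f(\bar{\gamma}_{\max})>0$. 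I would then split $(\beta-\epsilon)\|\bar{u}_{k-1}-\bar{v}_{k-1}\|^2=M_1(\beta-\epsilon)\|\bar{u}_{k-1}-\bar{v}_{k-1}\|^2+(1-M_1)(\beta-\epsilon)\|\bar{u}_{k-1}-\bar{v}_{k-1}\|^2$: the first piece already dominates $M\|\bar{u}_{k-1}-\bar{v}_{k-1}\|^2$ since $M\le M_1(\beta-\epsilon)$; for the remainder, $4\beta\langle\bar{u}_{k-1},\bar{v}_{k-1}\rangle+(1-M_1)(\beta-\epsilon)\|\bar{u}_{k-1}-\bar{v}_{k-1}\|^2$, I go back to the $(\gamma,\tau)$ coordinates and treat $\gamma_{k-1}\le0$ and $0<\gamma_{k-1}\le\bar{\gamma}_{\max}<1$ separately -- in the first range the cross term has a favorable sign, in the second it is controlled using $f(\bar{\gamma}_{\max})>0$, $\gamma_{k-1}<1\le\max\{\bar{\gamma},1\}$, and crude bounds on the $\gamma$-dependent coefficients -- arriving at $\ge M(\|\bar{u}_{k-1}\|^2+\|\bar{v}_{k-1}\|^2)\ge0$, with $M$ as in (\ref{def:M}). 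Being nonnegative, the remainder also upgrades the first estimate to a genuine lower bound. (If $\bar{u}_{k-1}=0$ the summand is $(\beta-\epsilon)\|\bar{v}_{k-1}\|^2$ and both bounds are immediate.)

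\noindent To finish the pointwise rate, Proposition~\ref{prop:simpleobservation} converts $\|\bar{u}_{k-1}-\bar{v}_{k-1}\|$ to $\|x_k-x_{k-1}\|/\theta$ and $\|\bar{x}_{k_0-1}-\bar{x}^\ast\|$ to $\|x_{k_0-1}-x^\ast\|/(1+\beta)$, giving $\sum_{k=k_0}^{K}\|x_k-x_{k-1}\|^2\le\frac{\theta}{M}\|x_{k_0-1}-x^\ast\|^2$. With $K=k\ge 2k_0$ there are $K-k_0+1>k/2$ terms, so by averaging some index $i$ with $k_0\le i\le k$ satisfies $\|x_i-x_{i-1}\|^2\le\frac{2\theta}{Mk}\|x_{k_0-1}-x^\ast\|^2$, which is the claimed bound (the factor $2$ in front being slack, as $\sqrt2<2$). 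For the $R$-linear rate, I would use Lemma~\ref{lem:Lipschitz} in the form $\|\bar{x}_{k-1}-\bar{x}^\ast\|^2\le2\bar{L}^2(\|\bar{u}_{k-1}\|^2+\|\bar{v}_{k-1}\|^2)$: combined with the second lower bound, (\ref{ineq:consequence2}) becomes $\|\bar{x}_k-\bar{x}^\ast\|^2\le\big(1-\tfrac{M\theta}{2(1+\beta)^2\bar{L}^2}\big)\|\bar{x}_{k-1}-\bar{x}^\ast\|^2$; taking square roots and multiplying through by $1+\beta$ yields the stated contraction for $\|x_k-x^\ast\|$, valid for all $k\ge k_0$.

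\noindent The hard part will be the lower-bound step: showing that the quadratic form $f(\gamma_{k-1})\|\bar{u}_{k-1}\|^2+(\beta-\epsilon)\tau_{k-1}^2$ is comparable, uniformly in $k$, to both $\|\bar{u}_{k-1}-\bar{v}_{k-1}\|^2$ and $\|\bar{u}_{k-1}\|^2+\|\bar{v}_{k-1}\|^2$, since the only safeguard against the form degenerating is that $\gamma_{k-1}$ is eventually at most $\bar{\gamma}_{\max}$, which (\ref{ineq:gammabound1}) places strictly below the root $\tfrac{\sqrt\beta-\sqrt\epsilon}{\sqrt\beta+\sqrt\epsilon}$ of $f$. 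Extracting the precise constant $M$ of (\ref{def:M}) -- isolating the right multiple of $f(\bar{\gamma}_{\max})$ and the role of the $\max\{\bar{\gamma},1\}$ factor -- is where the auxiliary parameter $M_1$ and the sign-based case split on $\gamma_{k-1}$ are needed; everything after that is telescoping and a pigeonhole.
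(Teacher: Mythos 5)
Your skeleton (the identity in $(\gamma_{k},\tau_{k})$ coordinates, the role of $f$ and of its smaller root $(\sqrt{\beta}-\sqrt{\epsilon})/(\sqrt{\beta}+\sqrt{\epsilon})$, the telescoping of (\ref{ineq:consequence2}), the pigeonhole over $k\geq 2k_0$, and the Lipschitz endgame via Lemma \ref{lem:Lipschitz}) is exactly the paper's, but the central lower-bound step as you describe it is broken. You split the summand as $M_1(\beta-\epsilon)\|\bar{u}_{k-1}-\bar{v}_{k-1}\|^2+R_{k-1}$ with $R_{k-1}:=4\beta\langle\bar{u}_{k-1},\bar{v}_{k-1}\rangle+(1-M_1)(\beta-\epsilon)\|\bar{u}_{k-1}-\bar{v}_{k-1}\|^2$ and claim $R_{k-1}\geq M(\|\bar{u}_{k-1}\|^2+\|\bar{v}_{k-1}\|^2)\geq 0$ for all large $k$, using only $\gamma_{k-1}\leq\bar{\gamma}_{\max}$. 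In $(\gamma,\tau)$ coordinates, $R_{k-1}=\bigl[-4\beta\gamma_{k-1}+(1-M_1)(\beta-\epsilon)(1+\gamma_{k-1})^2\bigr]\|\bar{u}_{k-1}\|^2+(1-M_1)(\beta-\epsilon)\tau_{k-1}^2$, and the bracket is negative for moderate positive $\gamma_{k-1}$ once $M_1$ is near $1$. Concretely, take $\epsilon=0$, $\beta=1/2$, $M_1=9/10$, $\gamma_{k-1}=1/2$, $\tau_{k-1}=0$ (i.e.\ $\bar{v}_{k-1}=-\tfrac12\bar{u}_{k-1}$, admissible under (\ref{ineq:gammabound1}) with any $\bar{\gamma}_{\max}\in(1/2,1)$): then $R_{k-1}=(-1+\tfrac{9}{80})\|\bar{u}_{k-1}\|^2<0$, while the full summand is only $f(1/2)\|\bar{u}_{k-1}\|^2=\tfrac18\|\bar{u}_{k-1}\|^2$, strictly smaller than the reserved first piece $M_1(\beta-\epsilon)\|\bar{u}_{k-1}-\bar{v}_{k-1}\|^2=\tfrac{81}{80}\|\bar{u}_{k-1}\|^2$. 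So neither your remainder bound nor the conjunction ``summand $\geq M_1(\beta-\epsilon)\|\bar{u}-\bar{v}\|^2$ \emph{and} $\geq M(\|\bar{u}\|^2+\|\bar{v}\|^2)$'' holds uniformly; no choice of crude constants can repair an inequality that already fails at the level of the quadratic form.

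What is true, and what the paper proves, is a disjunction over iterations rather than an additive split of the form: $M_1$ defines the index set $I$ of iterations where $-2(\beta+\epsilon)\gamma_{k-1}+(1-M_1)(\beta-\epsilon)\bigl[1+\gamma_{k-1}^2+\tau_{k-1}^2/\|\bar{u}_{k-1}\|^2\bigr]\geq 0$; for $k\in I$ the whole summand is at least $M_1(\beta-\epsilon)(\|\bar{u}_{k-1}\|^2+\|\bar{v}_{k-1}\|^2)$ directly from (\ref{eq:iterationconvergencerate}), while for $k\notin I$ the failure of that inequality forces $\gamma_{k-1}>0$ and yields $\|\bar{u}_{k-1}\|^2+\|\bar{v}_{k-1}\|^2\leq\frac{2(\beta+\epsilon)\gamma_{k-1}}{(1-M_1)(\beta-\epsilon)}\|\bar{u}_{k-1}\|^2$, which combined with $f(\gamma_{k-1})\geq f(\bar{\gamma}_{\max})/2$ for $k\geq k_0$ gives the second constant in (\ref{def:M}); $M$ is the minimum of the two regime constants (your scenario above lands outside $I$ and is handled by this second regime). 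A secondary slip: from $\limsup_k\gamma_k=\bar{\gamma}\leq\bar{\gamma}_{\max}$ you may not conclude $\gamma_{k-1}\leq\bar{\gamma}_{\max}$ eventually, only $\gamma_{k-1}\leq\bar{\gamma}_{\max}+\delta$; this is precisely why the paper works with $f(\bar{\gamma}_{\max})/2$ and $2\max\{\bar{\gamma},1\}$, which accounts for the factor $8$ in (\ref{def:M}). Once the dichotomy replaces your additive split, the remainder of your plan (telescoping, pigeonhole, and the $R$-linear contraction via $\|\bar{x}_{k-1}-\bar{x}^\ast\|^2\leq 2\bar{L}^2(\|\bar{u}_{k-1}\|^2+\|\bar{v}_{k-1}\|^2)$) coincides with the paper's proof.
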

\begin{proof}
Given that $\bar{v}_{k-1} = -\gamma_{k-1} \bar{u}_{k-1} + \tau_{k-1} \bar{z}_{k-1}$, where $\| \bar{z}_{k-1} \| = 1$, $\langle \bar{u}_{k-1}, \bar{z}_{k-1} \rangle = 0$, we have
\begin{eqnarray}
& & \frac{4 \beta \langle \bar{u}_{k-1}, \bar{v}_{k-1} \rangle}{\| \bar{u}_{k-1} \|^2} + \frac{(\beta - \epsilon)\| \bar{u}_{k-1} - \bar{v}_{k-1} \|^2}{ \| \bar{u}_{k-1} \|^2} \nonumber \\
 & = & \frac{2(\beta + \epsilon) \langle \bar{u}_{k-1}, \bar{v}_{k-1} \rangle}{\| \bar{u}_{k-1} \|^2} + \frac{(\beta - \epsilon)( \| \bar{u}_{k-1} \|^2 + \| \bar{v}_{k-1} \|^2)}{ \| \bar{u}_{k-1} \|^2} \nonumber \\
 & = & -2( \beta + \epsilon) \gamma_{k-1} + (\beta - \epsilon)\left[1 + \gamma_{k-1}^2 + \frac{\tau_{k-1}^2}{ \| \bar{u}_{k-1} \|^2}\right].  \label{eq:iterationconvergencerate}
\end{eqnarray}
Let $M_1 \in (0,1)$ be given.  Consider
\begin{eqnarray*}
I := \left\{ k \geq 1 \ ;\  -2( \beta + \epsilon) \gamma_{k-1} + (1 - M_1)(\beta - \epsilon)\left[1 + \gamma_{k-1}^2 + \frac{\tau_{k-1}^2}{ \| \bar{u}_{k-1} \|^2}\right] \geq  0 \right\}.
\end{eqnarray*} 
Therefore, from (\ref{eq:iterationconvergencerate}), we see that for $k \in I$, we have
\begin{eqnarray}
M_1 (\beta - \epsilon)( \| \bar{u}_{k-1}\|^2 + \| \bar{v}_{k-1} \|^2) & = & M_1( \beta - \epsilon)((1 + \gamma_{k-1}^2)\| \bar{u}_{k-1} \|^2 + \tau_{k-1}^2) \nonumber \\
& \leq & 4\beta \langle \bar{u}_{k-1}, \bar{v}_{k-1} \rangle + (\beta - \epsilon) \| \bar{u}_{k-1} - \bar{v}_{k-1} \|^2.  \label{ineq:iterationconvergencerate}
\end{eqnarray}
Let us now consider $k \not\in I$.  From (\ref{eq:iterationconvergencerate}), we have
\begin{eqnarray}\label{ineq:iterationconvergencerate2}
\frac{4 \beta \langle \bar{u}_{k-1}, \bar{v}_{k-1} \rangle}{\| \bar{u}_{k-1} \|^2} + \frac{(\beta - \epsilon)\| \bar{u}_{k-1} - \bar{v}_{k-1} \|^2}{ \| \bar{u}_{k-1} \|^2} \geq  -2(\beta + \epsilon) \gamma_{k-1} + (\beta - \epsilon)(1 + \gamma_{k-1}^2). 
\end{eqnarray} 
Let
\begin{eqnarray*}
f(\gamma) := - 2( \beta + \epsilon) \gamma + (\beta - \epsilon)( 1 + \gamma^2).
\end{eqnarray*}
{\textcolor{black}{We have $f(\bar{\gamma}_{\max}) > 0$ if $\bar{\gamma}_{\max}$ satisfies (\ref{ineq:gammabound1}).}}
We see that since {\textcolor{black}{$\limsup_{k \rightarrow \infty} \gamma_k = \bar{\gamma} \leq \bar{\gamma}_{\max}$}}, for $k$ sufficiently large, say $k \geq k_0$, we have $-2( \beta + \epsilon) \gamma_{k-1} + (\beta - \epsilon)(1 + \gamma_{k-1}^2) \geq {\textcolor{black}{f(\bar{\gamma}_{\max})/2 > 0}}$, and hence from (\ref{ineq:iterationconvergencerate2}), we have
\begin{eqnarray}\label{ineq:iterationconvergencerate3}
4 \beta \langle \bar{u}_{k-1}, \bar{v}_{k-1} \rangle + (\beta - \epsilon) \| \bar{u}_{k-1} - \bar{v}_{k-1} \|^2 \geq {\textcolor{black}{\frac{f(\bar{\gamma}_{\max})}{2}}} \| \bar{u}_{k-1} \|^2.
\end{eqnarray}
For $k \not\in I$ {\textcolor{black}{and $k \geq k_0$}}, by definition of $I$, we have that 
\begin{eqnarray*}
\frac{\| \bar{u}_{k-1} \|^2 + \| \bar{v}_{k-1} \|^2}{ \| \bar{u}_{k-1} \|^2} = 1 + \gamma_{k-1}^2 + \frac{\tau_{k-1}^2}{\| \bar{u}_{k-1} \|^2} \leq \frac{2(\beta + \epsilon)\gamma_{k-1}}{(1 - M_1)(\beta - \epsilon)} \leq \frac{{\textcolor{black}{4}} ( \beta + \epsilon) {\textcolor{black}{\max \{\bar{\gamma},1 \}}}}{(1 - M_1)(\beta - \epsilon)},
\end{eqnarray*}
Therefore, from (\ref{ineq:iterationconvergencerate3}) which holds when $k \geq k_0$, for $k \not\in I$, we have {\textcolor{black}{
\begin{eqnarray}
& & \frac{f(\bar{\gamma}_{\max})(1 - M_1)(\beta - \epsilon)}{8 ( \beta + \epsilon) \max \{\bar{\gamma},1\}} (\| \bar{u}_{k-1}\|^2 + \|\bar{v}_{k-1} \|^2)  \nonumber \\
& \leq & \frac{f(\bar{\gamma}_{\max})}{2} \| \bar{u}_{k-1} \|^2  \leq   4\beta \langle \bar{u}_{k-1}, \bar{v}_{k-1} \rangle + (\beta - \epsilon) \| \bar{u}_{k-1} - \bar{v}_{k-1} \|^2. \label{ineq:iterationconvergencerate4}
\end{eqnarray}
}}Combining (\ref{ineq:iterationconvergencerate}) and (\ref{ineq:iterationconvergencerate4}), where the latter holds when $k \geq k_0$, then 
\begin{eqnarray}\label{ineq:iterationconvergencerate5}
M (\| \bar{u}_{k-1}\|^2 + \| \bar{v}_{k-1} \|^2) \leq 4\beta \langle \bar{u}_{k-1}, \bar{v}_{k-1} \rangle + (\beta - \epsilon) \| \bar{u}_{k-1} - \bar{v}_{k-1} \|^2,
\end{eqnarray}
where $M$ is defined by (\ref{def:M}).  Hence, from (\ref{ineq:consequence2}) in Lemma \ref{lem:consequence}, we have, for $k \geq k_0$,
\begin{eqnarray}\label{ineq:Rlinear2}
M (\| \bar{u}_{k-1} \|^2 + \| \bar{v}_{k-1} \|^2) \leq \frac{(1+ \beta)^2}{\theta} (\| \bar{x}_{k-1} - \bar{x}^\ast \|^2 - \| \bar{x}_k - \bar{x}^\ast\|^2).
\end{eqnarray}
Therefore, using $\| \bar{u}_{k-1} - \bar{v}_{k-1} \|^2 \leq 2( \| \bar{u}_{k-1} \|^2 + \| \bar{v}_{k-1} \|^2)$, Proposition \ref{prop:simpleobservation} and the definitions of $\bar{x}_k, \bar{x}^\ast$, we have from (\ref{ineq:Rlinear2})
\begin{eqnarray*}
{\textcolor{black}{(k - k_0)}} \inf_{{\textcolor{black}{k_0}} \leq i \leq k} \| x_i - x_{i-1} \|^2 \leq \theta^2 \sum_{i = k_0}^{k}\| \bar{u}_{i-1} - \bar{v}_{i-1} \|^2 & \leq & \frac{2\theta (1+\beta)^2}{M} ( \| \bar{x}_{k_0 - 1} - \bar{x}^\ast \|^2) \\
& = & \frac{2\theta \| x_{k_0-1} - x^\ast \|^2}{M}.
\end{eqnarray*}
{\textcolor{black}{Hence, for $k \geq 2k_0$, 
\begin{eqnarray*}
\inf_{k_0 \leq i \leq k} \| x_i - x_{i-1} \|^2 \leq \frac{2 \theta \| x_{k_0-1} - x^\ast \|^2}{M (k - k_0)} \leq \frac{4 \theta \| x_{k_0-1} - x^\ast \|^2}{Mk}.
\end{eqnarray*}
}}The first result in the theorem then follows from the above.

\vspace{5pt}

\noindent Suppose $A$ or $B$ is Lipschitz continuous with Lipschitz constant $L$, then  when  $k \geq k_0$, we have 
\begin{eqnarray*}
\frac{M}{2 \bar{L}^2} \| \bar{x}_{k-1} - \bar{x}^\ast \|^2 & \leq & \frac{M}{2} (\|\bar{u}_{k-1} \| + \| \bar{v}_{k-1} \|)^2  \leq   M (\| \bar{u}_{k-1} \|^2 + \|\bar{v}_{k-1}\|^2) \\
& \leq & 4 \beta \langle \bar{u}_{k-1}, \bar{v}_{k-1} \rangle + (\beta - \epsilon) \| \bar{u}_{k-1} - \bar{v}_{k-1} \|^2 \\
& \leq & \frac{(1+ \beta)^2}{\theta} (\| \bar{x}_{k-1} - \bar{x}^\ast \|^2 - \| \bar{x}_k - \bar{x}^\ast\|^2),
\end{eqnarray*}
where the first inequality follows from (\ref{ineq:Lipschitz}) in Lemma \ref{lem:Lipschitz}, the third inequality follows from (\ref{ineq:iterationconvergencerate5}), which holds when $k \geq k_0$, and the last inequality follows from (\ref{ineq:consequence2}) in Lemma \ref{lem:consequence}.
The second result in the theorem then follows from the definitions of $\bar{x}_k, \bar{x}^\ast$,  and the above.
\end{proof}

{\textcolor{black}{
\begin{corollary}\label{cor:ALipschitz}
{\textcolor{black}{Let $\{ x_k \}$ be generated by the relaxed PR splitting method (\ref{RelaxedPR}) given an initial iterate $x_0 \in \Re^n$, where $\theta = 2 + \beta + \epsilon$, $0 \leq \epsilon < \min \{\beta, 1/\beta \}$, $\beta > 0$. Suppose $A$ is Lipschitz continuous with Lipschitz constant $L (> 0)$ such that
\begin{eqnarray}\label{ineq:L}
L < 2 \sqrt{2(1 + \beta)}\sqrt{\frac{ \sqrt{\beta} - \sqrt{\epsilon}}{\sqrt{\beta} + \sqrt{\epsilon}}} - (1 + \beta).
\end{eqnarray}
In particular, when we choose $\epsilon$ to be zero (that is, $\theta = 2 + \beta$), we require
\begin{eqnarray*}
L < 2 \sqrt{2(1+\beta)} - (1 + \beta),
\end{eqnarray*}
where we need $\beta < 7$ for $2 \sqrt{2(1+\beta)} - (1 + \beta)$ to be positive.
Then there exists $i \leq k$ and {\textcolor{black}{$i \geq k_0$}}, where $k \geq {\textcolor{black}{2}}k_0$, and $M$ given by (\ref{def:M}), such that
\begin{eqnarray*}
\| x_i - x_{i-1} \| \leq \left(\frac{{\textcolor{black}{2}}\sqrt{\theta}\| x_{k_0-1} - x^\ast \|}{\sqrt{M}}\right) \frac{1}{\sqrt{k}},
\end{eqnarray*}
and for all $k \geq k_0$,
\begin{eqnarray*}
\| x_k - x^\ast \| \leq \left[\sqrt{1 - \frac{M \theta}{2(1 + \beta)^2\bar{L}^2}}\right] \| x_{k-1} - x^\ast \|,
\end{eqnarray*}
where $\bar{L}$ is defined in Lemma \ref{lem:Lipschitz}.  Here, $k_0 \geq 1$ depends only on problem data and parameters.}}
\end{corollary}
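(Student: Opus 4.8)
\noindent The plan is to obtain the corollary directly from Theorem~\ref{thm:bargammaast}: the only hypothesis of that theorem not already in force is (\ref{ineq:gammabound1}), namely $\bar{\gamma}_{\max} < \frac{\sqrt{\beta}-\sqrt{\epsilon}}{\sqrt{\beta}+\sqrt{\epsilon}}$, and the $R$-linear conclusion additionally needs $A$ or $B$ Lipschitz, which holds by assumption. So everything reduces to showing that the Lipschitz bound (\ref{ineq:L}) on $A$ forces (\ref{ineq:gammabound1}). I would do this via the a priori estimate
\[
\bar{\gamma}_{\max} \ \le\ \frac{(L+1+\beta)^2}{8(1+\beta)},
\]
after which one only has to note that, since both sides of (\ref{ineq:L}) are positive in the non-vacuous regime, (\ref{ineq:L}) is equivalent (add $1+\beta$, square, divide by $8(1+\beta)$) to $\frac{(L+1+\beta)^2}{8(1+\beta)} < \frac{\sqrt{\beta}-\sqrt{\epsilon}}{\sqrt{\beta}+\sqrt{\epsilon}}$, giving exactly (\ref{ineq:gammabound1}).

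\noindent To prove the estimate I would fix an arbitrary initial iterate $\bar{x}_0 \in \Re^n$ and an index $k$ with $\bar{u}_k \neq 0$. (If $\bar{u}_k = 0$ for some $k$, then, $\bar{A}_0$ being single-valued, $\bar{x}_k = \bar{x}^\ast$ and Proposition~\ref{prop:simpleobservation} pins the iteration at $\bar{x}^\ast$ from then on, so the conclusions hold trivially.) From (\ref{ineq:barB0alternative}) we get $\langle \bar{x}_k - \bar{x}^\ast, \bar{v}_k \rangle + \|\bar{v}_k\|^2 \le \frac{2}{1+\beta}\langle \bar{u}_k, \bar{v}_k \rangle = -\frac{2\gamma_k}{1+\beta}\|\bar{u}_k\|^2$, using $\gamma_k = -\langle \bar{u}_k, \bar{v}_k \rangle / \|\bar{u}_k\|^2$. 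Bounding the left-hand side from below by Cauchy--Schwarz and completing the square,
\[
\langle \bar{x}_k - \bar{x}^\ast, \bar{v}_k \rangle + \|\bar{v}_k\|^2 \ \ge\ \|\bar{v}_k\|^2 - \|\bar{x}_k - \bar{x}^\ast\|\,\|\bar{v}_k\| \ \ge\ -\tfrac14\|\bar{x}_k - \bar{x}^\ast\|^2,
\]
so that $\gamma_k \le \frac{(1+\beta)\|\bar{x}_k - \bar{x}^\ast\|^2}{8\|\bar{u}_k\|^2}$. Since $A$ is $L$-Lipschitz, $\bar{A}_0$ is $\frac{L}{1+\beta}$-Lipschitz (as in the proof of Lemma~\ref{lem:Lipschitz}); applying this at $\bar{u}_k = J_{\bar{A}_0}(\bar{x}_k)$ and $0 = J_{\bar{A}_0}(\bar{x}^\ast)$ gives $\|\bar{x}_k - \bar{u}_k - \bar{x}^\ast\| \le \frac{L}{1+\beta}\|\bar{u}_k\|$, hence $\|\bar{x}_k - \bar{x}^\ast\| \le \frac{1+\beta+L}{1+\beta}\|\bar{u}_k\|$ by the triangle inequality. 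Feeding this into the previous bound yields $\gamma_k \le \frac{(L+1+\beta)^2}{8(1+\beta)}$ at every such $k$; taking $\limsup_{k\to\infty}$ and then the maximum over $\bar{x}_0$ gives the claimed estimate on $\bar{\gamma}_{\max}$.

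\noindent Once (\ref{ineq:gammabound1}) is in hand, Theorem~\ref{thm:bargammaast} applies without change and produces the two displayed conclusions of the corollary; the $\epsilon = 0$ special case is just the substitution $\frac{\sqrt{\beta}-\sqrt{\epsilon}}{\sqrt{\beta}+\sqrt{\epsilon}} = 1$ into (\ref{ineq:L}), whose right-hand side $2\sqrt{2(1+\beta)} - (1+\beta)$ is positive precisely when $8(1+\beta) > (1+\beta)^2$, i.e.\ $\beta < 7$. I do not expect a genuine obstacle: the heart of the proof is the short chain (\ref{ineq:barB0alternative}) $\Rightarrow$ complete-the-square $\Rightarrow$ Lipschitz control of $\|\bar{x}_k - \bar{x}^\ast\|/\|\bar{u}_k\|$, and the one thing demanding care is keeping the constants aligned so that the threshold on $L$ comes out exactly as (\ref{ineq:L}) --- this hinges on the factor $8$ produced by completing the square and on the identity $(1+\beta)\bar{L} = L + 1 + \beta$ in the range where (\ref{ineq:L}) is meaningful.
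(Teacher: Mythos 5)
Your proposal is correct, and at the top level it follows the same strategy as the paper: reduce the corollary to verifying condition (\ref{ineq:gammabound1}) of Theorem \ref{thm:bargammaast} by proving the a priori bound $\gamma_k \leq \frac{(1+\beta+L)^2}{8(1+\beta)}$ from (\ref{ineq:barB0alternative}) together with the Lipschitz estimate $\| \bar{x}_k - \bar{u}_k - \bar{x}^\ast \| \leq \frac{L}{1+\beta}\| \bar{u}_k\|$, and then note that (\ref{ineq:L}) is exactly the condition making this bound smaller than $\frac{\sqrt{\beta}-\sqrt{\epsilon}}{\sqrt{\beta}+\sqrt{\epsilon}}$. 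Where you differ is in how the key maximization is carried out: the paper squares the Lipschitz inequality to get the quadratic constraint (\ref{ineq:constraint}) and then invokes the appendix result Proposition \ref{prop:appendix0}, which solves the constrained problem $\max\{-x^2+xy\}$ by a Fritz John/KKT analysis; you instead use the elementary inequality $-x^2+xy \leq \frac{y^2}{4}$ (completing the square) and bound $y = \|\bar{x}_k-\bar{x}^\ast\|/\|\bar{u}_k\|$ directly by $1+\frac{L}{1+\beta}$ via the triangle inequality, which produces the identical constant while bypassing the appendix proposition entirely. This is a genuine (if modest) simplification; you also treat the degenerate case $\bar{u}_k=0$ explicitly (the iteration then stalls at $\bar{x}^\ast$ by Proposition \ref{prop:simpleobservation}), a point the paper leaves implicit both here and in Theorem \ref{thm:bargammaast}, where division by $\|\bar{u}_{k-1}\|^2$ is likewise performed without comment. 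The remaining steps --- the equivalence of (\ref{ineq:L}) with (\ref{ineq:gammabound1}) after adding $1+\beta$ and squaring, and the $\epsilon=0$, $\beta<7$ remark --- match the paper.
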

\begin{proof}
{\textcolor{black}{Given that $A$ is Lipschitz continuous with Lipschitz constant $L$ that satisfies (\ref{ineq:L}).  The corollary is proved by showing that $\bar{\gamma}_{\max}$ satisfies (\ref{ineq:gammabound1}).  Since $A$ is Lipschitz continuous with Lipschitz continuous $L$, then $\bar{A}_0$ is also Lipschitz continuous with Lipschitz constant $\frac{L}{1 + \beta}$.  From (\ref{inc:barA0barB0}) and (\ref{inc:barA0prime}), we then have
\begin{eqnarray}\label{ineq:Alipschitz}
\| \bar{x}_k - \bar{u}_k - \bar{x}^\ast \| \leq \frac{L}{1 + \beta} \| \bar{u}_k \|.
\end{eqnarray} 
Hence,
\begin{eqnarray*}
\| \bar{x}_k - \bar{x}^\ast\|^2 - 2 \langle \bar{x}_k - \bar{x}^\ast, \bar{u}_k \rangle + \| \bar{u}_k \|^2 \leq \frac{L^2}{(1 + \beta)^2} \| \bar{u}_k \|^2.
\end{eqnarray*}
Therefore,
\begin{eqnarray}\label{ineq:constraint}
\left( \frac{\| \bar{x}_k - \bar{x}^\ast\|}{\| \bar{u}_k \|} \right)^2 \leq \left(\frac{L^2}{(1 + \beta)^2} - 1 \right) + 2 \frac{\| \bar{x}_k - \bar{x}^\ast \|}{\| \bar{u}_k \|}.
\end{eqnarray}
From (\ref{ineq:barB0alternative}), we have
\begin{eqnarray}\label{ineq:bargammak_LipschitzA}
-\frac{2 \langle \bar{u}_k, \bar{v}_k \rangle}{(1+\beta)\| \bar{u}_k \|^2} \leq  -\left( \frac{ \| \bar{v}_k \|}{\| \bar{u}_k \|} \right)^2 + \left( \frac{ \| \bar{v}_k \|}{\| \bar{u}_k \|} \right) \left( \frac{ \| \bar{x}_k - \bar{x}^\ast \|}{\| \bar{u}_k \|} \right).
\end{eqnarray}
With (\ref{ineq:constraint}), by Proposition \ref{prop:appendix0} applied to the expression on right-hand side of the above inequality, where $x = \frac{\| \bar{v}_k \|}{\| \bar{u}_k \|}$ and $y = \frac{\| \bar{x}_k - \bar{x}^\ast \|}{ \| \bar{u}_k \|}$ in the propostion, we have from (\ref{ineq:bargammak_LipschitzA}) that for all $k \geq 0$,
\begin{eqnarray*}
\gamma_k = - \frac{\langle \bar{u}_k, \bar{v}_k \rangle}{\| \bar{u}_k \|^2} \leq \frac{1 + \beta}{8}\left( 1 + \frac{L}{1 + \beta} \right)^2.
\end{eqnarray*}
Therefore, 
\begin{eqnarray*}
\bar{\gamma}_{\max} \leq \frac{1 + \beta}{8}\left( 1 + \frac{L}{1 + \beta} \right)^2.
\end{eqnarray*}
Hence, since $L$ satisfies (\ref{ineq:L}), $\bar{\gamma}_{\max}$ satisfies (\ref{ineq:gammabound1}) in Theorem \ref{thm:bargammaast}, and the corollary then follows from the theorem.
}}
\end{proof}
}}

\begin{corollary}\label{cor:BLipschitz}
{\textcolor{black}{Let $\{ x_k \}$ be generated by the relaxed PR splitting method (\ref{RelaxedPR}) given an initial iterate $x_0 \in \Re^n$, where $\theta = 2 + \beta + \epsilon$, $0 \leq \epsilon < \min \{\beta, 1/\beta \}$, $\beta > 0$. Suppose $B$ is Lipschitz continuous with Lipschitz constant $L (> 0)$ such that
\begin{eqnarray*}
L < 1+\beta.
\end{eqnarray*}
Then there exists $i \leq k$ and {\textcolor{black}{$i \geq k_0$}}, where $k \geq {\textcolor{black}{2}}k_0$, and $M$ given by (\ref{def:M}), such that
\begin{eqnarray*}
\| x_i - x_{i-1} \| \leq \left(\frac{{\textcolor{black}{2}}\sqrt{\theta}\| x_{k_0-1} - x^\ast \|}{\sqrt{M}}\right) \frac{1}{\sqrt{k}},
\end{eqnarray*}
and for all $k \geq k_0$,
\begin{eqnarray*}
\| x_k - x^\ast \| \leq \left[\sqrt{1 - \frac{M \theta}{2(1 + \beta)^2\bar{L}^2}}\right] \| x_{k-1} - x^\ast \|,
\end{eqnarray*}
where $\bar{L}$ is defined in Lemma \ref{lem:Lipschitz}.  Here, $k_0 \geq 1$ depends only on problem data and parameters.}}
\end{corollary}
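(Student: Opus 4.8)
As in the proof of Corollary \ref{cor:ALipschitz}, the plan is to deduce the statement from Theorem \ref{thm:bargammaast} by verifying that its hypothesis (\ref{ineq:gammabound1}) holds, namely $\bar\gamma_{\max} < \frac{\sqrt\beta - \sqrt\epsilon}{\sqrt\beta + \sqrt\epsilon}$. Once this is in place, both displayed estimates follow directly from Theorem \ref{thm:bargammaast}, with $M$ given by (\ref{def:M}) and $\bar L$ as in Lemma \ref{lem:Lipschitz}.

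The first step is to translate the Lipschitz hypothesis on $B$ into a usable inequality. Exactly as in the proof of Lemma \ref{lem:Lipschitz}, if $B$ is Lipschitz continuous with constant $L$ then $\bar B_0 = \frac{1}{1+\beta}(B - \beta I)$ is Lipschitz continuous with constant $\ell := \frac{L}{1+\beta}$, and the assumption $L < 1+\beta$ gives $\ell < 1$. Using $-\bar x^\ast \in \bar B_0(0)$ from (\ref{inc:barA0barB0}) together with (\ref{inc:barB0prime}) and (\ref{def:barwk0}), this Lipschitz property gives, for every $k$,
\begin{eqnarray*}
\left\| \frac{2}{1+\beta}\bar u_k - \bar v_k - (\bar x_k - \bar x^\ast) \right\| \le \ell\,\| \bar v_k \|,
\end{eqnarray*}
i.e.\ (\ref{ineq:Lipschitz3}).

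The substantive step is then to convert this into an upper bound for $\gamma_k = -\langle \bar u_k, \bar v_k\rangle/\|\bar u_k\|^2$. I would write $\bar x_k - \bar x^\ast = \frac{2}{1+\beta}\bar u_k - \bar v_k - p_k$ with $\| p_k\| \le \ell\|\bar v_k\|$, substitute this expression into the monotonicity inequality (\ref{ineq:barA0prime}) for $\bar A_0$ and into (\ref{ineq:barB0alternative}) for $\bar B_0$, and apply the Cauchy--Schwarz inequality so as to eliminate $\bar x_k - \bar x^\ast$ entirely. Writing $\bar v_k$ via the decomposition (\ref{rel:barukbarvk}) along $\bar u_k$ and $\bar z_k$, this should reduce to a small system of quadratic inequalities in the scalars $\gamma_k$, $\tau_k/\|\bar u_k\|$ and $\|p_k\|/\|\bar u_k\|$; solving it explicitly, in the spirit of Section \ref{sec:technical} and of the elementary optimization (Proposition \ref{prop:appendix0}) used in the proof of Corollary \ref{cor:ALipschitz}, should yield an upper bound on $\gamma_k$, hence on $\bar\gamma = \limsup_{k\to\infty}\gamma_k$, that is strictly below $\frac{\sqrt\beta-\sqrt\epsilon}{\sqrt\beta+\sqrt\epsilon}$ under the hypothesis $L < 1+\beta$. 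Taking the supremum over $\bar x_0 \in \Re^n$ then gives $\bar\gamma_{\max} < \frac{\sqrt\beta-\sqrt\epsilon}{\sqrt\beta+\sqrt\epsilon}$, and Theorem \ref{thm:bargammaast} completes the proof.

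The main obstacle is precisely this last elementary-but-delicate optimization. In the $A$-Lipschitz case, (\ref{ineq:Alipschitz}) directly ties $\|\bar x_k - \bar x^\ast\|$ to $\|\bar u_k\|$ and feeds cleanly into (\ref{ineq:barB0alternative}); here (\ref{ineq:Lipschitz3}) instead couples $\bar u_k$, $\bar v_k$ and $\bar x_k - \bar x^\ast$ simultaneously, so one has to be careful about which monotonicity inequality is used to cancel $\bar x_k - \bar x^\ast$, in which direction the resulting bounds go, and how the residual $p_k$ is controlled, in order to make the threshold come out as exactly $L < 1+\beta$. I would expect most of the work to lie there.
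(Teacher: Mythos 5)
Your overall strategy is the right one (verify condition (\ref{ineq:gammabound1}) and invoke Theorem \ref{thm:bargammaast}), and your first step is exactly the paper's: $\bar B_0$ is Lipschitz with constant $L/(1+\beta)$, which yields (\ref{ineq:Lipschitz3}). But the decisive step --- actually extracting from (\ref{ineq:Lipschitz3}) a bound on $\gamma_k$ that satisfies (\ref{ineq:gammabound1}) --- is left as a conjecture in your write-up (``should reduce to a small system of quadratic inequalities \dots should yield an upper bound''), and the route you sketch for it is not the one that makes the threshold $L<1+\beta$ appear. You propose to re-inject $\bar x_k-\bar x^\ast=\frac{2}{1+\beta}\bar u_k-\bar v_k-p_k$ into the monotonicity inequalities (\ref{ineq:barA0prime}) and (\ref{ineq:barB0alternative}) and to carry the decomposition (\ref{rel:barukbarvk}) with the extra scalar $\tau_k$; nothing in the proposal shows this system actually closes, and it is not verified that the resulting bound on $\bar\gamma_{\max}$ falls below $\frac{\sqrt\beta-\sqrt\epsilon}{\sqrt\beta+\sqrt\epsilon}$. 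Since this is precisely where, as you yourself say, ``most of the work'' lies, the proposal has a genuine gap rather than a complete argument.

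For comparison, the paper's proof needs neither (\ref{ineq:barA0prime}), (\ref{ineq:barB0alternative}) nor (\ref{rel:barukbarvk}) at this stage: it simply squares (\ref{ineq:Lipschitz3}), expands, applies Cauchy--Schwarz to the two cross terms involving $\bar x_k-\bar x^\ast$, and divides by $\|\bar u_k\|^2$, obtaining
\begin{eqnarray*}
- \frac{4\langle \bar{u}_k, \bar{v}_k \rangle}{(1 + \beta) \| \bar{u}_k \|^2}
\leq - \left(\tfrac{\| \bar{x}_k - \bar{x}^\ast \|}{\| \bar{u}_k \|} \right)^2
+ \tfrac{4}{1+\beta} \left(\tfrac{\| \bar{x}_k - \bar{x}^\ast \|}{\| \bar{u}_k \|}\right)
+ 2 \left( \tfrac{\| \bar{v}_k \|}{\| \bar{u}_k \|} \right) \left( \tfrac{ \| \bar{x}_k - \bar{x}^\ast \|}{ \| \bar{u}_k \|} \right)
+ \left[\tfrac{L^2}{(1+\beta)^2} - 1 \right] \left(\tfrac{\|\bar{v}_k \|}{\| \bar{u}_k \|} \right)^2 - \tfrac{4}{(1 + \beta)^2},
\end{eqnarray*}
a quadratic expression in only the two ratios $\|\bar x_k-\bar x^\ast\|/\|\bar u_k\|$ and $\|\bar v_k\|/\|\bar u_k\|$. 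Proposition \ref{prop:appendix} then shows this expression is nonpositive whenever $L<1+\beta$, so $\gamma_k\le 0$ for all $k$, hence $\bar\gamma_{\max}\le 0$, which satisfies (\ref{ineq:gammabound1}) trivially because $\epsilon<\beta$ makes the right-hand side positive. To repair your proposal you would need either to carry out your multi-variable elimination explicitly and show it delivers the required bound, or to replace it by this squaring-plus-Cauchy--Schwarz reduction together with the elementary maximization of Proposition \ref{prop:appendix}.
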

\begin{proof}
{\textcolor{black}{Since $B$ is Lipschitz continuous with Lipschitz constant $L (> 0)$, we have $\bar{B}_0$ is Lipschitz continuous with Lipschitz constant $\frac{L}{1 + \beta}$, and the following holds:
\begin{eqnarray*}
\left\| \frac{2}{1+ \beta}\bar{u}_k - \bar{v}_k - (\bar{x}_k - \bar{x}^\ast) \right\| \leq \frac{L}{1 + \beta} \| \bar{v}_k \|. 
\end{eqnarray*}
Squaring both sides of the above inequality, upon algebraic manipulations and applying Cauchy-Schwartz inequality, we obtain:
\begin{eqnarray*}
- \frac{4\langle \bar{u}_k, \bar{v}_k \rangle}{(1 + \beta) \| \bar{u}_k \|^2}  & \leq & - \left(\frac{\| \bar{x}_k - \bar{x}^\ast \|}{\| \bar{u}_k \|} \right)^2  +  \frac{4}{1+\beta} \left(\frac{\| \bar{x}_k - \bar{x}^\ast \|}{\| \bar{u}_k \|}\right) + 2 \left( \frac{\| \bar{v}_k \|}{\| \bar{u}_k \|} \right) \left( \frac{ \| \bar{x}_k - \bar{x}^\ast \|}{ \| \bar{u}_k \|} \right) \\
& & + \left[\frac{L^2}{(1+\beta)^2} - 1 \right] \left(\frac{\|\bar{v}_k \|}{\| \bar{u}_k \|} \right)^2 - \frac{4}{(1 + \beta)^2} .
\end{eqnarray*}
By Proposition \ref{prop:appendix}, when $L < 1+ \beta$, the expression on the right-hand side of the above inequality is less than or equal to zero.  This implies that for all $k \geq 0$, $\gamma_k \leq 0$, which further implies that $\bar{\gamma}_{\max} \leq 0$.  Condition (\ref{ineq:gammabound1}) in Theorem \ref{thm:bargammaast} is hence satisfied, and the corollary follows.}}
\end{proof}

\vspace{10pt}

\noindent {\textcolor{black}{Note that in the above corollaries, we do not attempt to optimize the bounds on the Lipschitz constants for $A$ and $B$ for the results to hold, and finding the optimized bounds for these is left to interested readers.}}

\begin{remark}\label{rem:pointwiseconvergence}
Note that for $\theta = 2 + \beta, \beta >0$, by Lemma 4.5 in \cite{Monteiro}, $\{ \| x_{k}- x_{k-1} \|\}$ is nonincreasing.  Hence, the above theorem and corollaries imply that  when $\theta = 2 + \beta$, {\textcolor{black}{for $\bar{\gamma}_{\max} < 1$ or for $A$ Lipschitz continuous with Lipschitz constant $L < 2$ or for $B$ Lipschitz continuous with Lipschitz constant $L < 1 + \beta$}}, the pointwise convergence rate of $\| x_k - x_{k-1} \|$ for iterates $\{x_k\}$ generated using (\ref{RelaxedPR}) on (\ref{MIP}) is $\mathcal{O}(1/\sqrt{k})$. This is a partial extension of the pointwise convergence rate result in \cite{Monteiro} from $\theta \in (0, 2 + \beta)$ to $\theta = 2 + \beta$.  It is still an open problem whether a full extension is possible.
\end{remark}

\begin{remark}\label{rem:usualcase}
{\textcolor{black}{Observe from the proof of Theorem \ref{thm:bargammaast} that when $\theta \in (0, 2 + \beta)$ (that is, $\epsilon < 0$), we have pointwise convergence rate for $\| x_k - x_{k-1} \|$ of $\mathcal{O}(1/\sqrt{k})$ and $R$-linear convergence for $\|x_k - x^\ast \|$ without the need for (\ref{ineq:gammabound1}) to hold.  This is in line with what is known in the literature, as found for example in \cite{Bartz,Dao,Dong,Monteiro}.}}
\end{remark}

\section{Numerical Study}\label{sec:Numerical}

\noindent In this section, we consider the weighted Lasso minimization problem that is studied in the literature, such as \cite{Candes, Giselsson1, Monteiro}.  The problem is given by
\begin{eqnarray}\label{min:weightedLasso}
\min_{u \in \Re^n} f(u) + g(u),
\end{eqnarray}
where $f(u) := \frac{1}{2} \| Cu - b \|^2$ and $g(u) := \| Wu \|_1$ for every $u \in \Re^n$.  Here, $C \in \Re^{m \times n}$ and $b \in \Re^m$.  $C$ is a sparse data matrix with nonzero entries to zero entries in an average ratio of $1:20$ per row.  Each component of $b$ and each nonzero element of $C$ is drawn from the Gaussian distribution with zero mean and unit variance, while $W \in \Re^{n \times n}$ is a diagonal matrix with positive diagonal elements drawn from the uniform distribution on the interval $[0,1]$.  This setup is inspired by \cite{Giselsson1, Monteiro}.  It can be seen easily that $f$ defined above is a $\alpha$-strongly convex function on $\Re^n$, where $\alpha = \lambda_{\min}(C^T C)$ is the minimum eigenvalue of $C^T C$.  Hence, $\nabla f$ is maximal $\alpha$-strongly monotone.  Furthermore, $f$ is differentiable and its gradient is $\kappa$-Lipschitz continuous on $\Re^n$, where $\kappa = \lambda_{\max}(C^T C)$, which is the maximum eigenvalue of $C^T C$.  Clearly, $g$ defined above is a convex function on $\Re^n$.

\vspace{10pt}

\noindent We consider solving (\ref{min:weightedLasso}) with $f$ and $g$ defined in the previous paragraph using the relaxed PR splitting method (\ref{RelaxedPR}) on the monotone inclusion problem (\ref{MIP}) with 
\begin{eqnarray}\label{instanceAB}
A = \nabla f - \frac{\alpha}{2}I, \quad B = \partial g + \frac{\alpha}{2}I.
\end{eqnarray}
We have that $A$ and $B$ are maximal $\beta$-strongly monotone, where $\beta = \alpha/2$.

\vspace{10pt}

\noindent We first verify the convergence of iterates generated using (\ref{RelaxedPR}) with $A$ and $B$ given  by (\ref{instanceAB}) for $\theta \in (2 + \beta, 2 + \beta + \min\{\beta, 1/\beta\})$.  Convergence of iterates in this range of $\theta$ is predicted by our theory, in particular, Corollary \ref{cor:convergence}.  We consider a random instance of $(C, W, b)$ with $m = 300$ and $n = 200$, and set the initial iterate $x_0$ to be $(1, \ldots, 1)^T$.  We run the algorithm for $\theta$ varying from $1$ to $2 + \beta + \min \{ \beta, 1/\beta \}$, with the algorithm terminating at the $k^{th}$ iteration when $\| x_k - x_{k-1} \| \leq 10^{-5}$.  Our results are given in Figure \ref{figure1}. For this instance of $(C, W, b)$, $\beta$ is given by $0.1896$.  We see from Figure \ref{figure1} that the number of iterations performed before termination decreases as $\theta$ increases from 1, reaches its minimum around $\theta = 2.25$, which is close to $\theta = 2 + \beta$, and increases again.  Our results indicate that convergence using (\ref{RelaxedPR}) occurs as predicted by Corollary \ref{cor:convergence}, when $\theta$ lies between $2 + \beta$ and $2 + \beta + \min \{ \beta, 1/\beta\}$.  Indeed, when we set $\theta = 2 + \beta + \min \{\beta, 1/\beta\} + 0.05$, the algorithm still does not terminate when the number of iterations has reached $1000$. 

\begin{figure}[htpb]
\centering
\includegraphics[width=14cm]{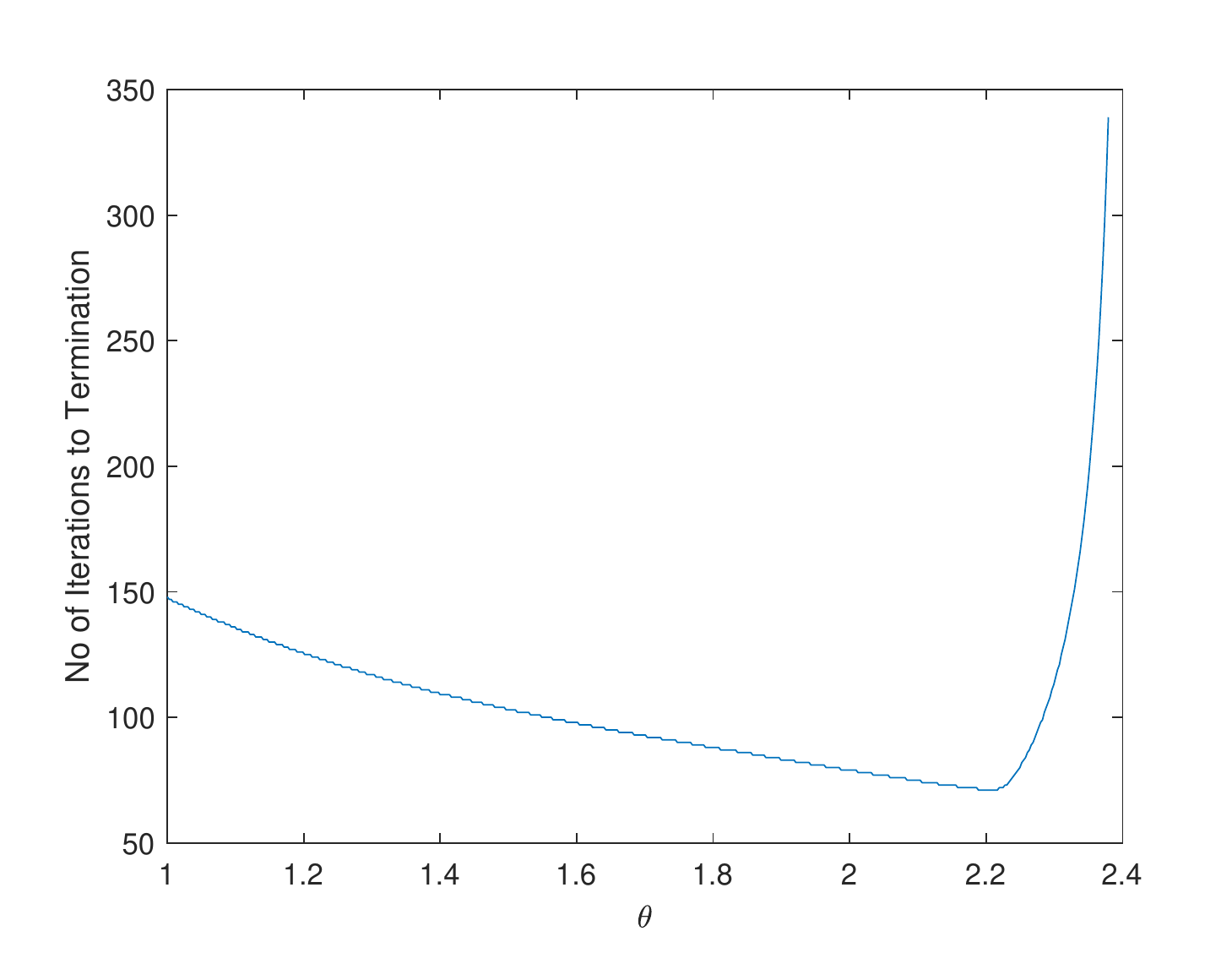}
\centering
\caption{Graph showing how the number of iterations before termination, upon applying (\ref{RelaxedPR}), varies with $\theta$.} \label{figure1}
\end{figure}  

\vspace{10pt}

\noindent Next, we test the validity of Assumption \ref{ass:Si} on the weighted Lasso minimization problem (\ref{min:weightedLasso}), by applying (\ref{RelaxedPR}) on (\ref{MIP}) with $A, B$ given by (\ref{instanceAB}).  In our numerical experiments, we set $n = m$ to be $200$ and $400$, and $x_0$ to be $(1, \ldots, 1)^T$ and $(0, -1, \ldots, -1)^T$.  For each of the four scenarios, we generate $(C, W)$ randomly $100$ times, and run the algorithm for $800$ iterations on those instances of $(C,W)$ with $\alpha = \lambda_{{\textcolor{black}{\min}}}(C^TC) > 0$, that is, $\beta > 0$.  We called these instances ``acceptable".  We set $\theta$ to be always equal to $2 + \beta + \frac{1}{2} \min \{ \beta, 1/\beta\}$.

\vspace{10pt}

\noindent  In order to test the validity of Assumption \ref{ass:Si}, we need to know the optimal solution to (\ref{min:weightedLasso}).  We do this by setting $b$ to be zero.  Hence, the optimal solution to (\ref{min:weightedLasso}) is $u^\ast = 0$.  Therefore, $x^\ast = 0$, since $u^\ast = J_A(x^\ast)$, where $A$ is given in (\ref{instanceAB}).   Hence, to validate Assumption \ref{ass:Si}, we need to verify that for each iterate, $x_k$, we have $x_k^i \not= 0$, for $i = 1, \ldots, n$.  

\vspace{10pt}

\noindent {\textcolor{black}{Our results are shown in Table \ref{table1}.  These results give preliminary indication that Assumption \ref{ass:Si} should hold in practice, and is only a technical assumption needed to prove convergence using the relaxed PR splitting method (\ref{RelaxedPR}) to solve the monotone inclusion problem (\ref{MIP}) for $\theta$ beyond the range of $(0, 2 + \beta]$, and less than $2 + \beta + \min \{  \beta, 1/\beta \}$.
}}
\begin{table}[htpb]
\centering
\begin{tabular}{||l|c|c||}  \hline
    & {\textcolor{black}{$\#$ ``acceptable" instances}} & $\begin{array}{l}
    							{\textcolor{black}{\#\ {\rm{instances\ with}}}}  \\ 										{\textcolor{black}{{{\rm{Assumption\ \ref{ass:Si}\ satisfied}}}}}
    								\end{array}$
    								 \\ \hline \hline
$\begin{array}{l}
n = m = 200 \\
x_0 = (1, \ldots, 1)^T 
\end{array}$
&   98  & 98 \\ \hline \hline
$\begin{array}{l}
n = m = 400 \\
x_0 = (1, \ldots, 1)^T 
\end{array}$ 
&   100  & 100 \\ \hline \hline
$\begin{array}{l}
n = m = 200 \\
x_0 = (0, -1, \ldots, -1)^T 
\end{array}$
&   100  & 100 \\ \hline \hline
$\begin{array}{l}
n = m = 400 \\
x_0 = (0, -1, \ldots, -1)^T 
\end{array}$
&   100  & 100  \\ \hline
\end{tabular}
\caption{{\textcolor{black}{For each scenario, $100$ instances with $b = 0$ are generated.}}}\label{table1}
\end{table}

\section{Conclusion}\label{sec:Conclusion}

In this paper, we consider the relaxed PR splitting method (\ref{RelaxedPR}) to solve the monotone inclusion problem (\ref{MIP}).  We consider $A, B: \Re^n \rightrightarrows \Re^n$ in (\ref{MIP}) to be maximal $\beta$-strongly monotone operators, with $\beta > 0$, in this paper.  We show for the first time that for $\theta \in (2 + \beta, 2 + \beta + \min \{ \beta, 1/\beta \})$, an accumulation point, $x^{\ast\ast}$, of iterates $\{ x_k \}$ generated using (\ref{RelaxedPR}) on (\ref{MIP}) has $J_A(x^{\ast\ast})$ that solves (\ref{MIP}).  As a consequence, if $A$ or $B$ is single-valued, then $\{ x_k \}$ converges to a limit point $x^\ast$, where $J_A(x^\ast)$ solves (\ref{MIP}).  These are shown under a technical assumption.  Note that for $n = 1$, not having this technical assumption leads to trivial consideration. Furthermore, for $\theta \in [2 + \beta, 2 + \beta + \min \{ \beta, 1/\beta \})$, we provide pointwise convergence rate and {\textcolor{black}{$R$}}-linear convergence rate results of $\{x_k\}$ in Theorem \ref{thm:bargammaast} and Corollaries \ref{cor:ALipschitz}, \ref{cor:BLipschitz}.  Through numerical experiments on the weighted Lasso minimization problem, we provide preliminary evidence to show that the technical assumption used in this paper is likely to hold in practice.

%
%
%

\section*{Acknowledgement}
{\textcolor{black}{
I would like to thank the Associate Editor for handling the paper, and the two reviewers for their comments and questions that help to improve the paper.  Finally, I would like to thank Prof. R.D.C. Monteiro for introducing me to the relaxed Peaceman-Rachford splitting method.
}}

\appendix
\section{Appendix}\label{sec:Appendix}
{\textcolor{black}{
\begin{proposition}\label{prop:appendix0}
{\textcolor{black}{We have
\begin{eqnarray*}
\max_{x \geq 0, 0 \leq y^2 \leq L^2/(1 + \beta)^2 - 1 + 2y} - x^2 + xy 
\end{eqnarray*}
is less than or equal to $\frac{1}{4}\left(1 + \frac{L}{1 + \beta} \right)^2$.}}
\end{proposition}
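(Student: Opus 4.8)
The plan is to reduce this two-variable maximization to a one-variable problem by first simplifying the constraint set. Completing the square in the inequality $y^2 \le L^2/(1+\beta)^2 - 1 + 2y$ turns it into $(y-1)^2 \le \big(L/(1+\beta)\big)^2$, which is the same as $1 - L/(1+\beta) \le y \le 1 + L/(1+\beta)$; since $y^2 \ge 0$ is vacuous, the feasible region is exactly $\{(x,y) : x \ge 0,\ |y-1| \le L/(1+\beta)\}$.

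Next I would observe that for each fixed $x \ge 0$ the objective $-x^2 + xy$ is nondecreasing in $y$, so at any maximizer we may take $y$ as large as the constraint allows, namely $y = 1 + L/(1+\beta) =: c$, noting that $c > 0$. The problem then collapses to $\max_{x \ge 0}\big(-x^2 + cx\big)$.

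Finally, $-x^2 + cx$ is a concave quadratic in $x$ whose unconstrained maximizer is $x = c/2$; since $c > 0$ this point lies in $[0,\infty)$, so it is also the constrained maximizer, and the maximum value is $c^2/4 = \frac{1}{4}\big(1 + L/(1+\beta)\big)^2$. This establishes the stated bound, in fact with equality.

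The argument is elementary and I do not anticipate a genuine obstacle; the only point requiring a moment's care is checking that the vertex $x = c/2$ of the quadratic respects the sign constraint $x \ge 0$, which holds precisely because $c = 1 + L/(1+\beta)$ is positive.
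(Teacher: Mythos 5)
Your argument is correct, and it takes a genuinely different route from the paper. The paper treats the problem as a constrained optimization in $(x,y)$ (reading the constraint as including $y\geq 0$), writes down the first-order optimality conditions with multipliers for $x\geq 0$, $y\geq 0$ and $(y-1)^2\leq L^2/(1+\beta)^2$, and runs a complementary-slackness case analysis ($x^\ast=0$ versus $x^\ast>0$, then pinning down $y^\ast=1+\tfrac{L}{1+\beta}$ and $x^\ast=\tfrac12\bigl(1+\tfrac{L}{1+\beta}\bigr)$). You instead complete the square in the constraint, use that the objective is nondecreasing in $y$ for fixed $x\geq 0$ to push $y$ to its upper endpoint $c=1+\tfrac{L}{1+\beta}$, and then maximize the one-variable concave quadratic $-x^2+cx$ at $x=c/2$. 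Your route is more elementary and arguably cleaner: it needs no multipliers, no existence-of-maximizer or constraint-qualification discussion (it can even be read as the direct chain of bounds $-x^2+xy\leq -x^2+cx\leq c^2/4$ valid for every feasible point), and it additionally shows the bound is attained. The only interpretive difference is that you discard the left inequality $0\leq y^2$ as vacuous, whereas the paper's proof also imposes $y\geq 0$ (natural in the application, where $y$ is a ratio of norms); this is harmless, since dropping $y\geq 0$ only enlarges the feasible set and your maximizer has $y=c>0$ anyway, so the stated upper bound follows in either reading. The paper's KKT computation, for its part, buys nothing extra here beyond what your direct argument gives.
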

\begin{proof}
We have an optimal solution $(x^\ast, y^\ast)$ to the above maximization problem satisfies:
\begin{eqnarray}\label{eq:appendix0KKT0}
\left( \begin{array}{c}
		2x^\ast - y^\ast \\
		-x^\ast
		\end{array} \right) +
\lambda_1 \left( \begin{array}{c}
					-1 \\
					0
				\end{array} \right) +
\lambda_2 \left( \begin{array}{c}
					0 \\
					-1
				\end{array} \right) +
\lambda_3 \left( \begin{array}{c}
					0 \\
					2y^\ast - 2
				\end{array} \right) = 0,
\end{eqnarray}
with
\begin{eqnarray}
& & \lambda_1 x^\ast = 0, \quad \lambda_1 \geq 0, \quad x^\ast \geq 0, \label{rel:appendix0KKT1} \\
& & \lambda_2 y^\ast = 0, \quad \lambda_2 \geq 0, \quad y^\ast \geq 0, \label{rel:appendix0KKT2} \\
& & \lambda_3 \left((y^\ast)^2 - 2y^\ast + 1 - \frac{L^2}{(1+\beta)^2} \right) = 0, \quad \lambda_3 \geq 0, \quad (y^\ast)^2 \leq \frac{L^2}{(1 + \beta)^2} - 1 + 2 y^\ast. \label{rel:appendix0KKT3}
\end{eqnarray}
We first observe if $x^\ast = 0$, then $y^\ast = 0$, and the proposition is proved.
Suppose $x^\ast > 0$.  Then, from (\ref{rel:appendix0KKT1}), we have $\lambda_1 = 0$ and this implies that $y^\ast = 2 x^\ast$ from the first ``row" of (\ref{eq:appendix0KKT0}).  Also, observe from the second ``row" in (\ref{eq:appendix0KKT0}) with $x^\ast > 0$ and $\lambda_2 \geq 0$ that we must have $\lambda_3 > 0$.  Hence from (\ref{rel:appendix0KKT3}), we get 
\begin{eqnarray*}
(y^\ast)^2 - 2y^\ast + 1 - \frac{L^2}{(1+\beta)^2} = 0.
\end{eqnarray*}
Therefore,
\begin{eqnarray*}
y^\ast = \frac{2 \pm \sqrt{4 - 4\left(1 - \frac{L^2}{(1+\beta)^2} \right)}}{2} = 1 \pm \frac{L}{1 + \beta}.
\end{eqnarray*}
Now, since $y^\ast = 2 x^\ast > 0$, from (\ref{rel:appendix0KKT2}), we get $\lambda_2 = 0$.  Therefore, from the ``second" row in (\ref{eq:appendix0KKT0}), we have $x^\ast = \lambda_3(2y^\ast - 2) > 0$.   We observe then that $y^\ast \not= 1 - \frac{L}{1 + \beta}$.  Hence, $y^\ast = 1 + \frac{L}{1 + \beta}$.  And $x^\ast = \frac{1}{2} y^\ast = \frac{1}{2} \left(1 + \frac{L}{1 + \beta} \right)$.  The proposition is hence proved.  
\end{proof}
}}

\begin{proposition}\label{prop:appendix}
{\textcolor{black}{For $L < 1 + \beta$, we have
\begin{eqnarray*}
\max_{x,y\geq 0} - y^2 + \frac{4}{1+\beta}y + 2xy + \left[\frac{L^2}{(1+\beta)^2}- 1\right]x^2 - \frac{4}{(1 + \beta)^2}
\end{eqnarray*}
is less than or equal to zero.}}
\end{proposition}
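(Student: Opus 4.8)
The plan is to regard the displayed expression as a function
\[
F(x,y) \;=\; -y^2 + \frac{4}{1+\beta}\,y + 2xy + \left(\frac{L^2}{(1+\beta)^2}-1\right)x^2 - \frac{4}{(1+\beta)^2}
\]
on the closed first quadrant $\{x\ge 0,\ y\ge 0\}$ and to reduce the two‑variable maximization to a one‑variable one, exactly in the spirit of Proposition~\ref{prop:2} (where a homogenization $z=y/x$ was used) and Proposition~\ref{prop:appendix0}. The first move I would make is to absorb the three terms not involving $x$ into a single square, using $-y^2+\frac{4}{1+\beta}y-\frac{4}{(1+\beta)^2} = -\bigl(y-\frac{2}{1+\beta}\bigr)^2$, so that
\[
F(x,y) \;=\; 2xy + \left(\frac{L^2}{(1+\beta)^2}-1\right)x^2 - \Bigl(y-\tfrac{2}{1+\beta}\Bigr)^2 .
\]
The structural fact the whole argument rests on is that $L<1+\beta$ forces the coefficient $\frac{L^2}{(1+\beta)^2}-1$ to be strictly negative; I would isolate this observation at the very start.

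Next I would fix $x\ge 0$ and maximize over $y\ge 0$. Since $F(x,\cdot)$ is a concave (downward) parabola in $y$, its maximum over $y\ge 0$ is attained either at the vertex or at $y=0$; substituting back produces a univariate function $\phi(x)$ on $[0,\infty)$, and it then suffices to show $\phi(x)\le 0$ for every $x\ge 0$, which is again a short estimate on a quadratic in $x$ where the negative sign of $\frac{L^2}{(1+\beta)^2}-1$ is what must be exploited. An entirely parallel route, mirroring the paper's other technical lemmas, is to write down the Fritz--John/KKT system for $\max_{x,y\ge 0}F(x,y)$ as was done for Propositions~\ref{prop:3} and~\ref{prop:appendix0}, discard the impossible sign configurations, and reduce to the stationary point $\bigl(0,\tfrac{2}{1+\beta}\bigr)$ at which $F=0$.

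I expect the main obstacle to be that the homogeneous quadratic part of $F$, namely $2xy+\bigl(\frac{L^2}{(1+\beta)^2}-1\bigr)x^2-y^2$, is \emph{not} negative semidefinite — its determinant equals $-\frac{L^2}{(1+\beta)^2}<0$ — so the cross term $2xy$ has to be controlled with care along the boundary of the quadrant, and a naive completion of squares does not by itself settle the sign. The device I would rely on to get around this is the alternative grouping $F(x,y)=\frac{L^2}{(1+\beta)^2}x^2+\frac{4}{1+\beta}x-\bigl(x-y+\frac{2}{1+\beta}\bigr)^2$, obtained after collecting the terms, in which the nonpositive square $-\bigl(x-y+\frac{2}{1+\beta}\bigr)^2$ is played off against the remaining $x$‑terms; the condition $L<1+\beta$ is precisely what is needed to make that last square dominate and thereby close the estimate.
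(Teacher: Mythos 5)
Your algebra is correct --- both regroupings are genuine identities, and you are right that the homogeneous part $2xy+\bigl(\frac{L^2}{(1+\beta)^2}-1\bigr)x^2-y^2$ is indefinite --- but the device you rely on to finish does not close the estimate. In the grouping $F(x,y)=\frac{L^2}{(1+\beta)^2}x^2+\frac{4}{1+\beta}x-\bigl(x-y+\frac{2}{1+\beta}\bigr)^2$, the nonpositive square cannot dominate the remaining terms: for every $x\ge 0$ the point $y=x+\frac{2}{1+\beta}$ is feasible, makes the square vanish, and leaves $F=\frac{L^2}{(1+\beta)^2}x^2+\frac{4}{1+\beta}x$, which is strictly positive for all $x>0$ (and unbounded as $x\to\infty$), no matter how small $L<1+\beta$ is. Your first plan hits the same wall: maximizing over $y\ge 0$ with $x$ fixed puts the vertex at $y=x+\frac{2}{1+\beta}\ge 0$ and yields $\phi(x)=\frac{4}{1+\beta}x+\frac{L^2}{(1+\beta)^2}x^2$, which is not $\le 0$ for any $x>0$. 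Concretely, with $\beta=1$, $L=1$, $x=4$, $y=5$ the displayed expression equals $12$. So the step "the last square dominates because $L<1+\beta$" is where your argument breaks, and no alternative completion of squares can repair it: over the whole quadrant the expression is nonpositive only on the slice $x=0$.

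This is also exactly where your route diverges from the paper's. The paper does no partial maximization; it writes the Fritz--John/KKT system, shows that $x^\ast>0$ would force $x^\ast=-2(1+\beta)/L^2<0$, hence $x^\ast=0$, and then evaluates the objective at $(0,0)$ and $\bigl(0,\frac{2}{1+\beta}\bigr)$ to obtain the value $0$. That argument inspects only KKT points and tacitly presupposes that the maximum is attained --- precisely the point your vertex computation contradicts, since along $y=x+\frac{2}{1+\beta}$ the supremum is $+\infty$. Note the contrast with Proposition \ref{prop:appendix0}, where the extra constraint $y^2\le L^2/(1+\beta)^2-1+2y$ couples the variables and keeps the problem bounded; no such coupling appears in the present statement, although in the application (Corollary \ref{cor:BLipschitz}) the ratios playing the roles of $x$ and $y$ are in fact linked through the iterates. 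A workable proof would have to retain such a relation (for instance, the inner-product information discarded when Cauchy--Schwarz is invoked) rather than treat $x$ and $y$ as free; as a claim about all $x,y\ge 0$, the stated bound is contradicted by the evaluation above, so your blind attempt could not have succeeded along the lines you describe.
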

\begin{proof}
{\textcolor{black}{We have an optimal solution  $(x^\ast,y^\ast)$ to the above maximization problem satisfies:
\begin{eqnarray}\label{eq:appendixKKT0}
\left( \begin{array}{c}
{\textcolor{black}{-}}2y^\ast {\textcolor{black}{-}} 2\left(\frac{L^2}{(1+\beta)^2} - 1\right)x^\ast \\
2y^\ast {\textcolor{black}{-}} 2x^\ast {\textcolor{black}{-}} \frac{4}{1 + \beta}
\end{array} \right) +
\lambda_1 \left( \begin{array}{c}
				-1 \\
				0
				\end{array} \right) +
\lambda_2 \left( \begin{array}{c}
				0 \\
				-1
				\end{array} \right) = 0,
\end{eqnarray}
with
\begin{eqnarray}
& & \lambda_1 x^\ast = 0, \quad \lambda_1\geq 0, \quad x^\ast \geq 0, \label{rel:appendixKKT1}\\
& & \lambda_2 y^\ast = 0, \quad \lambda_2 \geq 0, \quad y^\ast \geq 0. \label{rel:appendixKKT2}
\end{eqnarray}
Suppose $x^\ast > 0$.  Then, from (\ref{rel:appendixKKT1}), we have $\lambda_1 = 0$, and hence from the first ``row" in (\ref{eq:appendixKKT0}), we get
\begin{eqnarray}\label{eq:appendixyast1}
y^\ast = \left(1 - \frac{L^2}{(1+\beta)^2} \right) x^\ast.
\end{eqnarray}
Given that $L < 1 + \beta$, from (\ref{eq:appendixyast1}), we have $y^\ast > 0$, which then implies by (\ref{rel:appendixKKT2}) that $\lambda_2 = 0$.  Hence, from the second ``row" in (\ref{eq:appendixKKT0}), we obtain
\begin{eqnarray}\label{eq:appendixyast2}
y^\ast = x^\ast + \frac{2}{1+\beta}.
\end{eqnarray}
By (\ref{eq:appendixyast1}) and (\ref{eq:appendixyast2}), we have
\begin{eqnarray*}
x^\ast = -\frac{2(1+\beta)}{L^2},
\end{eqnarray*}
which is less than zero.  This is a contradiction to $x^\ast > 0$.  Hence, $x^\ast = 0$.  Suppose $y^\ast = 0$. Then the optimal value to the maximization problem is $-4/(1+\beta)^2$.  Suppose $y^\ast > 0$.  Then $\lambda_2 = 0$, and we have from the second ``row" of (\ref{eq:appendixKKT0}), $y^\ast = 2/(1+\beta)$.  Together with $x^\ast = 0$, the objective function value of the problem is then equal to zero, and we are done.}}
\end{proof}


\begin{thebibliography}{99}
\bibitem{Bartz}
{\textcolor{black}{S. Bartz, M.N. Dao \& H.M. Phan, \emph{Conical averagedness and convergence analysis of fixed point algorithms}, Journal of Global Optimization, {\bf{82}}(2022), pp. 351-373.}}
\bibitem{Bauschke}
H.H. Bauschke \& P.L. Combettes, \emph{Convex Analysis and Monotone Operator Theory in Hilbert Spaces}, Springer, 2011.
\bibitem{Bauschke1}
H.H. Bauschke, J.Y. Bello Cruz, T.T.A. Nghia, H.M. Phan \& X. Wang, \emph{The rate of linear convergence of the Douglas-Rachford algorithm for subspaces is the cosine of the Friedrichs angle}, Journal of Approximation Theory, {\bf{185}}(2014), pp. 63-79.
\bibitem{Bauschke2}
H.H. Bauschke, J.Y. Bello Cruz, T.T.A. Nghia, H.M. Phan \& X. Wang, \emph{Optimal rates of linear convergence of relaxed alternating projections and generalized Douglas-Rachford methods for two subspaces}, Numerical Algorithms, {\bf{73}}(2016), pp. 33-76.
\bibitem{Bauschke3}
H.H. Bauschke, D. Noll \& H.M. Phan, \emph{Linear and strong convergence of algorithms involving averaged nonexpansive operators}, Journal of Mathematical Analysis and Applications, {\bf{421}}(2015), pp. 1-20.
\bibitem{Candes}
E.J. Candes, M.B. Wakin \& S. Boyd, \emph{Enhancing sparsity by reweighted $l_1$ minimization}, Journal of Fourier Analysis and Applications, {\bf{14}}(2008), pp. 877 - 905.
\bibitem{Combettes}
P.L. Combettes, \emph{Solving monotone inclusions via compositions of nonexpansive averaged operators}, Optimization, {\bf{53}}(2004), pp. 475-504.
\bibitem{Combettes1}
P.L. Combettes, \emph{Iterative construction of the resolvent of a sum of maximal monotone operators}, Journal of Convex Analysis, {\bf{16}}(2009), pp. 727-748.
\bibitem{Dao}
{\textcolor{black}{M.N. Dao \& H.M. Phan, \emph{Adaptive Douglas-Rachford splitting algorithm for the sum of two operators}, SIAM Journal on Optimization, {\bf{29}}(2019), pp. 2697-2724.}}
\bibitem{Davis}
D. Davis, \emph{Convergence rate analysis of the Forward-Douglas-Rachford splitting scheme}, SIAM Journal on Optimization, {\bf{25}}(2015), pp. 1760-1786.
\bibitem{Davis1}
D. Davis \& W. Yin, \emph{Convergence rate analysis of several splitting schemes}, in ``Splitting Methods in Communication, Imaging, Science, and Engineering" (R. Glowinski, S. Osher, W. Yin, eds), pp. 115-163, Scientific Computation book series, Springer, Cham., 2016.
\bibitem{Davis2}
D. Davis \& W. Yin, \emph{Faster convergence rates of relaxed Peaceman-Rachford and ADMM under regularity assumptions}, Mathematics of Operations Research, {\bf{42}}(2017), pp. 783-805.
\bibitem{Dong}
Y. Dong \& A. Fischer, \emph{A family of operator splitting methods revisited}, Nonlinear Analysis, {\bf{72}}(2010), pp. 4307-4315.
\bibitem{Eckstein}
J. Eckstein \& D.P. Bertsekas, \emph{On the Douglas-Rachford splitting method and the proximal point algorithm for maximal monotone operators}, Mathematical Programming, {\bf{55}}(1992), pp. 293-318.
\bibitem{Facchinei}
F. Facchinei \& J.-S. Pang, \emph{Finite-Dimensional Variational Inequalities and Complementarity Problem}, Volume II, Springer-Verlag, 2003.
\bibitem{Giselsson}
P. Giselsson, \emph{Tight global linear convergence rate bounds for Douglas-Rachford splitting}, Journal of Fixed Point Theory and Applications, {\bf{19}}(2017), pp. 2241–2270.
\bibitem{Giselsson1}
P. Giselsson \& S. Boyd, \emph{Linear convergence and metric selection for Douglas-Rachford splitting and ADMM}, IEEE Transactions on Automatic Control, {\bf{62}}(2017), pp. 532 - 544.
\bibitem{He}
B. He \& X. Yuan, \emph{On the convergence rate of Douglas-Rachford operator splitting method}, Mathematical Programming, Series A, {\bf{153}}(2015), pp. 715-722.
\bibitem{John}
F. John, \emph{Extremum problems with inequalities as side conditions}, in ``Studies and Essays, Courant Anniversary Volume" (K. O. Friedrichs, O. E. Neugebauer and J. J. Stoker, eds), pp. 187 - 204, Wiley (Interscience), New York, 1948.
\bibitem{Lions}
P.-L. Lions \& B. Mercier, \emph{Splitting algorithms for the sum of two nonlinear operators}, SIAM Journal on Numerical Analysis, {\bf{16}}(1979), pp. 964-979.
\bibitem{Mangasarian}
O.L. Mangasarian \& S. Fromovitz, \emph{The Fritz John necessary optimality conditions in the presence of equality and inequality constraints}, Journal of Mathematical Analysis and Applications, {\bf{17}}(1967), pp. 37 - 47.
\bibitem{Monteiro}
R.D.C. Monteiro \& C.-K. Sim, \emph{Complexity of the relaxed Peaceman-Rachford splitting method for the sum of two maximal strongly monotone operators,} Computational Optimization and Applications, {\bf{70}}(2018), pp. 763-790.
\bibitem{Sim}
{\textcolor{black}{C.-K. Sim, \emph{A note on convergence using the relaxed Peaceman-Rachford splitting method on the sum of two maximal strongly monotone operators,} unpublished manuscript, August 2020 (https://drive.google.com/file/d/1Yf5rn21mhof4j3u8qQRlFVzONfbtn6jV/view).}}
\end{thebibliography}
\end{document}